\documentclass[a4paper,11pt,leqno]{article}

\usepackage[T5,T1]{fontenc}
\usepackage[utf8]{inputenc}

\usepackage{amsmath}
\usepackage{amsfonts}
\usepackage{amssymb}
\usepackage{amsthm}
\usepackage{thmtools}
\usepackage{mathabx}
\usepackage{comment}
\usepackage{svg}
\usepackage{tikz-cd}
\tikzcdset{arrow style=math font} 
\usepackage{textcomp}

\usepackage{ifpdf}
\usepackage{ifthen}

\usepackage{enumerate}

\usepackage{aliascnt}

\usepackage{color}
\usepackage[hypertexnames=false]{hyperref} 
\hypersetup{          
  colorlinks=true,
  breaklinks,
  linkcolor=blue,
  filecolor=magenta, urlcolor=blue,
  citecolor=magenta, linktoc=all, }
\usepackage{cleveref}

\crefformat{enumi}{#2\textup{#1}#3}

\usepackage[all]{xy}

\usepackage{pifont}
\usepackage{enumitem}
\usepackage[nottoc,numbib]{tocbibind}



\usepackage{tikz}


\usepackage{footnote}

\usepackage{multirow}




\numberwithin{equation}{section} 
\numberwithin{figure}{section} 
\numberwithin{table}{section} 

\newtheorem{thm}[equation]{Theorem}
\Crefname{thm}{Theorem}{thm}

\newtheorem*{ack}{Acknowledgments}

\newtheorem{thmA}{Theorem}

\newtheorem{conjA}[thmA]{Conjecture}

\newtheorem{lemma}[equation]{Lemma}
\Crefname{lemma}{Lemma}{Lemmas}

\newtheorem{prop}[equation]{Proposition}
\Crefname{prop}{Proposition}{thm}

\newtheorem{cor}[equation]{Corollary}
\Crefname{corollary}{Corollary}{corollary}

\newtheorem{conj}[equation]{Conjecture}
\Crefname{conjecture}{Conjecture}{conjecture}

\theoremstyle{definition}

\newtheorem{rem}[equation]{Remark}

\newtheorem{example}[equation]{Example}

\newtheorem{definition}[equation]{Definition}
\newtheorem{question}[equation]{Question}

\Crefname{notation}{Notation}{notation}

\newtheorem{block}[equation]{}

\crefname{subsection}{subsection}{subsections}
\Crefname{subsection}{Subsection}{Subsections}

\Crefname{figure}{Fig.}{Fig.}

\crefformat{figure}{fig.~#2#1#3}
\Crefformat{figure}{Fig.~#2#1#3}

\crefformat{equation}{eq.~#2#1#3}
\Crefformat{equation}{Eq.~#2#1#3}

\crefformat{enumi}{#2#1#3}
\crefrangeformat{enumi}{#3#1#4-#5#2#6}


\newcommand{\C}{\mathbb{C}}
\newcommand{\CP}{\mathbb{CP}}

\newcommand{\Z}{\mathbb{Z}}
\newcommand{\R}{\mathbb{R}}
\newcommand{\Q}{\mathbb{Q}}

\newcommand{\bZ}{\overline{\Z}}
\newcommand{\bQ}{\overline{\Q}}
\newcommand{\bR}{\overline{\R}}

\newcommand{\Cinf}{\mathcal{C}^\infty}

\newcommand{\tX}{\tilde X}

\newcommand{\tY}{\tilde Y}
\newcommand{\tH}{\tilde H}

\newcommand{\TPS}{\mathop{\mathrm{J}}\nolimits}
\newcommand{\tTPS}{\mathop{\mathrm{AJ}}\nolimits}
\newcommand{\CF}{\mathop{\mathcal{F}}\nolimits}
\newcommand{\CFne}{\mathop{\mathcal{F}}_{\mathrm{ne}}\nolimits}
\newcommand{\NCF}{\mathop{\mathcal{N}}\nolimits}
\newcommand{\CNN}{\mathop{\mathrm{CN}_{\mathfrak{N}}}\nolimits}
\newcommand{\Tne}{\mathcal{T}_{\mathrm{ne}}}
\newcommand{\CN}{\mathop{\mathrm{CN}}\nolimits}
\newcommand{\capno}{\mathop{\mathrm{Cap}}\nolimits}
\newcommand{\maxax}{\mathop{\mathcal{M}}\nolimits}

\newcommand{\Loj}{\mathop{\mathcal{L}}\nolimits}
\newcommand{\new}{\mathop{\mathfrak{N}}\nolimits}
\newcommand{\Knew}{\mathop{K\mathfrak{N}}\nolimits}
\newcommand{\sed}{\mathop{\mathrm{sed}}\nolimits}
\newcommand{\lc}{\mathop{\mathrm{lc}}\nolimits}

\renewcommand{\O}{\mathcal{O}}

\newcommand{\m}{\mathfrak{m}}
\newcommand{\aaa}{\mathfrak{a}}
\newcommand{\T}{\mathcal{T}}

\newcommand{\rk}{\mathop{\mathrm{rk}}}
\newcommand{\im}{\mathop{\mathrm{im}}}

\newcommand{\mult}{\mathop{\mathrm{mult}}\nolimits}

\newcommand{\Vol}{\mathop{\rm Vol}\nolimits}
\newcommand{\convx}{\mathop{\rm conv}\nolimits}
\newcommand{\Gr}{\mathop{\rm Gr}\nolimits}

\newcommand{\supp}{\mathop{\rm supp}\nolimits}

\newcommand{\GL}{\mathop{\rm GL}\nolimits}






\newcommand{\ord}{\mathrm{ord}}

\newcommand{\newpol}[2]{
  \left\{
  \begin{array}{c}
  #1 \\ \hline \hline
  #2
  \end{array}
  \right\}
}

\newcommand{\mleq}{<\hspace{-.15cm}<}



\newcommand{\set}[2]{\left\{ #1 \,\middle\vert\, #2 \right\}}
\newcommand{\gen}[2]{\left\langle #1 \,\middle|\, #2 \right\rangle}

\renewcommand{\epsilon}{\varepsilon}

\newcommand{\fa}[2]{\forall #1 \,:\, #2}




\usepackage{geometry}

\geometry{
	a4paper,
	total={170mm,257mm},
	left=25mm,
	right=25mm,
	top=20mm,
	bottom=30mm
}


\newcounter{dummy}
\renewcommand{\thedummy}{\roman{dummy}}
\crefformat{dummy}{#2(#1)#3}

\newenvironment{blist}
{
	\begin{list}{(\thedummy)}
		{
			\setlength\labelsep{4pt}
			\setlength\itemindent{4pt}
			\setlength\leftmargin{0pt}
			\setlength\labelwidth{0pt}
			\usecounter{dummy}
		}
	}
	{
	\end{list}
}

\newcommand{\sectionbreak}{
  \begin{center}
    \ding{96} \ding{96} \ding{96}
  \end{center}
}

\author{
{\fontencoding{T1}\selectfont
Baldur Sigurðsson}\footnote{
The author was partially supported by the
Spanish grant of MCIN (PID2020-114750GB-C32/AEI/10.13039/501100011033)
and a
Mar\'ia Zambrano contract for international atraction of talent at
Universidad Complutense de Madrid.
}
}

\title{
On the Jacobian polygon and \L ojasiewicz exponent
of isolated complex hypersurface singularities
}
\date{\today}

\begin{document}
\maketitle

\begin{abstract}

Given a hypersurface singularity $(X,0) \subset (\C^{n+1},0)$ defined by
a holomorphic function $f:(\C^{n+1},0) \to (\C,0)$,
we introduce an alternating version of Teissier's Jacobian Newton polygon,
associated with a complex isolated hypersurface singularity, and prove formulas
for both these invariants in terms of an embedded resolution of $(X,0)$.
The formula for
the alternating version has an advantage, in that for Newton nondegenerate
functions, it can be calculated in terms of volumes of faces of the Newton
diagram, whereas a similar formula for the original nonalternating version
includes mixed volumes.

The Milnor fiber can be given a handlebody decomposition, with handles
corresponding to intersection points with the polar curve in generic plane
sections of the singularity. This way we obtain a Morse-Smale complex. Teissier
associates with each branch of the polar curve a vanishing rate, and we show
that this induces a filtration of the Morse-Smale complex. We apply this result
in order to calculate the \L ojasiewicz exponent in terms of the alternating
Jacobian polygon, but we expect it to be of further independent interest.
In the case
of a Newton nondegenerate hypersurface, our result produces a formula for the
\L ojasiewicz exponent in terms of Newton numbers of certain subdiagrams.

This statement is related to a conjecture by Brzostowski, Krasi\'nski and
Oleksik, for which we provide a counterexample. Our formula for the
\L ojasiewicz exponent is based on a global calculation over the Newton
diagram, rather than locally specifying a subset of the facets to consider,
as in this conjecture. We conjecture a similar statement, which is based on
our formula and inspired by the nonnegativity of local $h$-vectors.

\end{abstract}
\tableofcontents

\section{Introduction} \label{s:intro}

\begin{block}
In \cite{Tei}, Teissier associates a Newton polygon with a complex
isolated hypersurface singularity $(X,0) \subset (\C^{n+1},0)$,
defined as the zero set
of a holomorphic function $f \in \C\{x_0,x_1,\ldots,x_n\}$.
This is the Newton polygon of \emph{the Cerf diagram},
the image of the polar curve under the map $(f,g):(\C^{n+1},0) \to (\C^2,0)$,
where $g:\C^{n+1} \to \C$ is a generic linear function.
We will refer to this polygon as the Jacobian polygon, and
denote it $\TPS(f,0)$.
The set of Newton polygons $\new$ is a commutative and
cancellative semigroup, generated by diagrams of the form
\begin{minipage}{.5\textwidth}
\begin{equation} \label{eq:intro_newpol}
  \newpol{m}{n} = \Gamma_+(x^m + y^n) \subset \R_{\geq 0}^2.
\end{equation}
\end{minipage}
\begin{minipage}{.5\textwidth}
\vspace{0.2cm}
\begin{center}
\begin{picture}(0,0)%
\includegraphics{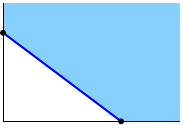}%
\end{picture}%
\setlength{\unitlength}{4144sp}%
\begin{picture}(1385,940)(428,-539)
\put(1351,-466){\makebox(0,0)[lb]{\smash{\fontsize{12}{14.4}\normalfont {\color[rgb]{0,0,0}$x^m$}%
}}}
\put(496,209){\makebox(0,0)[lb]{\smash{\fontsize{12}{14.4}\normalfont {\color[rgb]{0,0,0}$y^n$}%
}}}
\end{picture}%

\label{fig:mnpolyg}
\end{center}
\vspace{0.0cm}
\end{minipage}
Of particular interest is the highest number $m/n$ appearing in $\TPS(f,0)$,
which we will call its \emph{degree}.
By \cite{Tei,lejjaltei}, this number coincides with the
\emph{\L ojasiewicz exponent}.
This work is motivated by the search of an identification
of the \L ojasiewicz exponent of a Newton nondegenerate function
in terms of its Newton diagram, which has been conducted in
\cite{bko_conj,bko_surface,
fukui_newton_diag,lichtin_estimation,lenarcik_exponent}.
This problem---and in particular, the conjecture of
Brzostowski, Krasi\' nski and Oleksik, counter to which we
give \cref{example:counter}---is discussed further below.
This this problem can be seen as a version of one of Arnold's
problems \cite[1975-1]{arnolds_problems}.

We introduce \emph{the alternating Jacobian polygon} $\tTPS(f,0)$
(\cref{def:TN}\cref{it:TN_AJ}) which lives in the
group $K\new$ of formal differences of Newton polygons,
and show that this invariant
can be calculated from an embedded resolution.
The alternating Jacobian polygon is the alternating sum of Jacobian
polygons
$\TPS\left(f^{(d+1)},0\right)$,
where $f^{(d+1)}$ is the restriction
of $f$ to a generic linear subspace of dimension $d+1$.
As a result, one recovers
$\TPS\left(f,0\right)
 = \tTPS\left(f^{(n+1)},0\right) + \tTPS\left(f^{(n)},0\right)$.
At any smooth point of the total transform of $X$ in an embedded
resolution of $(X,0)$, we can define a number $m$, the order of vanishing
of the pullback of $f$, and similarly $n$, the order of vanishing of
a generic linear function.
The formula \cref{eq:ttps} for $\tTPS(f,0)$ can be seen as an integral
of the additive Newton polygon \cref{eq:intro_newpol} with respect to
the Euler characteristic,
in a similar way that A'Campo's formula \cite{ACampo}
calculates the monodromy zeta
function as an integral of the multiplicative polynomial $(t^m - 1)^{-1}$.
The strong relationship between the Jacobian and alternating Jacobian
polygons means that we recover a similar
formula for Teissier's Jacobian polygon as well.
\end{block}

\begin{thmA}[\ref{thm:tps}] \label{thmA:tps}
Let $(Y,D) \to (\C^{n+1},0)$ be an embedded resolution of $(X,0)$.
For each irreducible component $D_i \subset D$ of the exceptional
divisor, denote by $m_i$ and $n_i$ the order of vanishing along $D_i$ of the
pullback of $f$ and a generic linear function, respectively,
and denote by $H \subset \C^{n+1}$ a generic linear hyperplane.
The Jacobian and alternating Jacobian polygons can be computed from
an embedded resolution of $(X,0)$ as
\[
  \TPS(f,0)
  =
  (-1)^n \sum_{i \in I}
                \chi(D_i^\circ \setminus (\tX\cup\tH))
                \newpol{m_i}{n_i}
\]
and
\[
  \tTPS(f,0)
  =
  (-1)^n \sum_{i \in I}
                \chi(D_i^\circ \setminus \tX)
                \newpol{m_i}{n_i},
\]
where $\tX$ and $\tH$ denote the strict transforms of $X$ and $H$.
\end{thmA}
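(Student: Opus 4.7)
The plan is to first establish the formula for $\TPS(f,0)$ directly via a geometric analysis of the polar curve on the embedded resolution $Y$, and then deduce the formula for $\tTPS(f,0)$ by invoking the defining identity $\TPS(f,0) = \tTPS(f^{(n+1)},0) + \tTPS(f^{(n)},0)$ together with Euler-characteristic additivity. Refining $Y$ if necessary so that $\tH$ also meets $D$ transversally, the strict transform $\tilde\Gamma \subset Y$ of the polar curve $\Gamma$ of $(f,g)$ intersects each exceptional component $D_i$ generically in isolated points lying in the open stratum $D_i^\circ \setminus (\tX \cup \tH)$, and each such intersection corresponds to one branch of $\Gamma$ with $(\mult f, \mult g) = (m_i, n_i)$, thereby contributing a summand $\newpol{m_i}{n_i}$ to $\TPS(f,0)$.

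For the local step, near a point $p \in D_i^\circ \setminus (\tX \cup \tH)$, pick coordinates $(u_0, u_1, \dots, u_n)$ with $D_i = \{u_0 = 0\}$, so that $\tilde f = u_0^{m_i}\alpha$ and $\tilde g = u_0^{n_i}\beta$ with $\alpha, \beta$ local units. A direct computation yields
\[
  d\tilde f \wedge d\tilde g = u_0^{m_i+n_i-1}\bigl(m_i\alpha\,du_0\wedge d\beta - n_i\beta\,du_0\wedge d\alpha + u_0\, d\alpha\wedge d\beta\bigr),
\]
so, after dividing by $u_0^{m_i+n_i-1}$ and restricting to $D_i$, $\tilde\Gamma$ meets $D_i^\circ \setminus (\tX \cup \tH)$ exactly at the critical points of the holomorphic map $h_i \colon D_i^\circ \setminus (\tX \cup \tH) \to \C^\ast$ defined by the restriction of the meromorphic function $\tilde f^{n_i}/\tilde g^{m_i}$ (which has order zero along $D_i$). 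A Poincaré--Hopf / index argument for a generic holomorphic morphism from a smooth $n$-dimensional variety to $\C^\ast$ then identifies the cardinality of this critical set with $(-1)^n \chi(D_i^\circ \setminus (\tX \cup \tH))$. Summing over $i$ gives the formula for $\TPS(f,0)$.

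The formula for $\tTPS(f,0)$ follows by induction on $n$. A generic hyperplane $H$ pulls back to $\tH$, which restricts the embedded resolution $(Y,D)$ to an embedded resolution of $(X \cap H, 0)$ with exceptional components $D_i \cap \tH$, and along these the pullback multiplicities of $f|_H$ and of a generic linear form on $H$ are again $m_i$ and $n_i$. Applying the just-established $\TPS$ formula in this restricted setting expresses $\TPS(f^{(n)},0)$ as $(-1)^{n-1}\sum_i \chi((D_i\cap\tH)^\circ \setminus \tX)\,\newpol{m_i}{n_i}$; together with the stratification identity
\[
  \chi(D_i^\circ \setminus \tX) = \chi(D_i^\circ \setminus (\tX \cup \tH)) + \chi((D_i\cap\tH)^\circ \setminus \tX)
\]
and the relation $\TPS(f,0) = \tTPS(f^{(n+1)},0) + \tTPS(f^{(n)},0)$, the two formulas are equivalent and the alternating telescoping produces $\tTPS(f,0) = (-1)^n \sum_i \chi(D_i^\circ \setminus \tX)\,\newpol{m_i}{n_i}$.

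The technical heart, and main obstacle, is the index-theoretic count in the second paragraph: one must justify that the polar curve has no branches meeting $D$ at deeper strata $D_i \cap D_j$ nor along $\tX$ or $\tH$ (which requires the genericity of $g$ and $H$, plus possible further blowups to ensure transversality of $\tilde\Gamma$ with each $D_i$), and then establish the Poincaré--Hopf-type equality $\#\mathrm{crit}(h_i) = (-1)^n \chi(D_i^\circ \setminus (\tX \cup \tH))$ for a generic holomorphic map to $\C^\ast$ on a smooth quasi-projective $n$-fold. This is the step analogous in spirit to A'Campo's realization of the monodromy zeta function as an Euler-characteristic integral over the exceptional divisor.
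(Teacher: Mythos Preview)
Your approach differs substantially from the paper's. You try to locate the polar branches directly on each $D_i$, identifying $\tilde\Gamma \cap D_i^\circ$ with the critical locus of $h_i = (\pi^*f)^{n_i}/(\pi^*g)^{m_i}\big|_{D_i}$ and then invoking a Poincar\'e--Hopf count. The paper instead fixes a rational $\alpha = m/n$ and computes the $\alpha$-part of $\TPS(f,0)$ as $(-1)^n\chi(F_\alpha)$, where $F_\alpha$ is the piece of the Milnor fiber on which $|\phi_\alpha| = |g^m/f^n|$ lies in a fixed annulus $U$. This $\chi(F_\alpha)$ is evaluated two ways: as a Lefschetz count (the map $\phi_\alpha|_{F_\alpha}\colon F_\alpha \to U$ has only $A_1$ singularities, located at the polar intersections with $\alpha_q = \alpha$, and the generic fibre contributes nothing since $\chi(U)=0$), and via a retraction of $F_\alpha$ onto $\hat D \cap \phi_\alpha^{-1}(U)$ inside a toroidal modification $\hat Y_A$ resolving the indeterminacy of $\phi_\alpha$. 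Your deduction of the $\tTPS$ formula from the $\TPS$ formula by slicing with $\tH$ and Euler additivity is the same as the paper's.

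The gap is precisely the Poincar\'e--Hopf step you flag. The map $h_i$ is not proper on $D_i^\circ \setminus (\tX \cup \tH)$, so one must control its boundary behaviour. Near $D_i \cap D_j$ one has $h_i = u^{\,m_j n_i - m_i n_j}\cdot(\mathrm{unit})$ in a local equation $u$ for $D_j$, which tends to $0$ or $\infty$ \emph{unless} $m_i/n_i = m_j/n_j$; in that case $h_i$ restricts to a nonconstant function on $D_i \cap D_j$ and critical points can escape to this stratum. This is exactly the obstruction the paper's $A$-separating toroidal modification is designed to remove, and your outline offers no substitute. Moreover, your count is by cardinality, but a polar branch meeting $D_i$ with multiplicity $k$ contributes $\newpol{km_i}{kn_i} = k\,\newpol{m_i}{n_i}$, so one would further need the local Milnor numbers of $h_i$ to coincide with these intersection multiplicities, which is not addressed. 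The paper's route through the Milnor fiber sidesteps both issues: for generic $g$ the polar points on the fiber are automatically $A_1$, and the vanishing $\chi(U)=0$ eliminates any index-theoretic bookkeeping on the $D_i$ themselves.
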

\begin{rem}
This statement, in the case of plane curves, $n=1$, can be seen 
to follow from a more general result of Michel \cite{Mich_Jac}, where
she considers arbitrary finite holomorphic maps $(X,p) \to (\C^2,0)$
in place of our map $(f,g):(\C^2,0)\to (\C^2,0)$, where $(X,p)$ is
a normal surface singularity.
We expect a similar generalization to hold for finite maps
$(X,p) \to \C^2$ with $(X,p)$ isolated of any dimension.
\end{rem}

\begin{block}
As an application of this theorem, we find a formula for the
alternating Jacobian polygon in terms of volumes of faces
of the Newton diagram,
in the case of a Newton nondegenerate hypersurface,
\cref{thm:tps_nondeg}\cref{it:tps_nondeg_n}, reflecting
Varchenko's formula \cite{Var_zeta} for the monodromy zeta function.
We also calculate the
alternating Jacobian polygons associated with a generic plane section
of any codimension in terms of mixed volumes.
As a result, we find a formula for the Jacobian polygon associated
with any plane section of $(X,0)$, reflecting Oka's work on
principal zeta functions \cite{oka_principal}.
Here, a facet of a Newton diagram $\Gamma \in \R^{n+1}$ is a face
of dimension $n$, and a coordinate facet is a facet of a diagram obtained
by intersecting $\Gamma$ with a coordinate subspace.
\end{block}

\begin{thmA}[\ref{thm:tps_nondeg}, \ref{cor:tps_nondeg}]
\begin{blist}
\item
Let $f:(\C^{(n+1)},0) \to (\C,0)$ be a Newton nondegenerate
holomorphic function with an isolated singularity. Then, the
alternating Jacobian polygon $\tTPS(f,0)$ can be calculated
in terms of volumes of coordinate facets of
the Newton diagram $\Gamma(f)$.
\item
The alternating Jacobian polygons $\tTPS(f^{(d+1)},0)$ and the
Jacobian polygons $\tTPS(f^{(d+1)},0)$ of the restriction of $f$
to generic linear subspaces of dimension $d+1$ can be calculated
in terms of mixed volumes of coordinate facets
of the Newton diagrams $\Gamma(f)$
and $\Gamma(g)$, where $g:\C^{n+1} \to \C$
is a generic linear function.
\end{blist}
\end{thmA}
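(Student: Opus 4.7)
The plan is to derive both statements from \cref{thmA:tps} by evaluating the Euler characteristics that appear there on a toric embedded resolution. Since $f$ is Newton nondegenerate, a regular simplicial refinement $\Sigma$ of the dual fan of $\Gamma_+(f)$ yields such a resolution $\pi : Y \to \C^{n+1}$. The exceptional divisors over the origin are indexed by rays $\rho \in \Sigma(1)$ whose primitive generator $v_\rho = (a_0,\dots,a_n)$ has all $a_j > 0$. For each such $\rho$ the multiplicity of $f$ is $m_\rho = \min_{w \in \Gamma_+(f)} \langle v_\rho, w \rangle$, the value of the support function of the Newton polyhedron at $v_\rho$, and for a generic linear form $g = \sum c_j x_j$ one has $n_\rho = \min_j a_j$.

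Next I would compute $\chi(D_\rho^\circ \setminus \tX)$ for each such $\rho$. The open stratum $D_\rho^\circ$ is the torus orbit $O_\rho \cong (\C^*)^n$, and $\tX \cap O_\rho$ is the hypersurface cut out by the face polynomial $f_{\tau_\rho}$, where $\tau_\rho \subset \Gamma_+(f)$ is the face dual to $\rho$. Newton nondegeneracy forces $\tX \cap O_\rho$ to be smooth, so the Euler characteristic of its complement in $O_\rho$ is a standard Bernstein--Kouchnirenko expression in the normalised volume of $\tau_\rho$. Rays with $\tau_\rho$ a vertex contribute zero, and grouping the remaining rays by their dual face $\tau$ produces, for each compact face $\tau$ of $\Gamma(f)$, a contribution proportional to $\Vol(\tau) \cdot \newpol{m_\tau}{n_\tau}$, where $m_\tau$ and $n_\tau$ are the constant values of $m_\rho$ and $n_\rho$ on the relative interior of the dual cone of $\tau$. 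A telescoping argument in Varchenko's style then cancels the contributions from non-coordinate faces across the open stars of lower-dimensional faces in the dual fan, leaving only coordinate facets and proving (i).

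For (ii), the same scheme applies either after restricting $f$ to a generic $(d{+}1)$-dimensional linear subspace --- on which the restriction is again Newton nondegenerate by genericity --- or by working directly on a toric modification adapted to both $\Gamma(f)$ and $\Gamma(g)$. In the latter formulation, $\tX \cap O_\rho$ and $\tH \cap O_\rho$ are cut out jointly by face polynomials of $\Gamma(f)$ and $\Gamma(g)$, and the Bernstein--Kouchnirenko count of Euler characteristics naturally returns mixed volumes of coordinate facets of the two diagrams. The identity $\TPS(f,0) = \tTPS(f^{(n+1)},0) + \tTPS(f^{(n)},0)$, together with its analogues for every generic section, then converts the alternating expressions into the non-alternating Jacobian polygons of the statement.

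The main obstacle is the cancellation argument in the second step: showing that contributions from interior, non-coordinate compact faces of $\Gamma(f)$ telescope across the dual fan and that only coordinate facets survive, with the correct combinatorial multiplicities. This closely parallels Varchenko's computation of the zeta function of monodromy, and is the place where the combinatorics of the Newton polyhedron interacts most delicately with the toric stratification; once it is settled, the reduction to volumes (respectively mixed volumes) of coordinate facets is a routine Bernstein--Kouchnirenko calculation.
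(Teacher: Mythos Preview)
Your overall strategy---choose a toric embedded resolution from a regular subdivision of the dual fan, apply \cref{thmA:tps}, and evaluate the Euler characteristics via Bernstein--Kouchnirenko---is the same as the paper's. The gap is in your identification of $D_\rho^\circ$.

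You write that ``the open stratum $D_\rho^\circ$ is the torus orbit $O_\rho \cong (\C^*)^n$''. This is false. In \cref{thmA:tps}, $D_i^\circ$ denotes $D_i$ minus the \emph{other exceptional components}, but the toric boundary divisors corresponding to the standard rays $e_0,\ldots,e_n$ are not exceptional (they are the strict transforms of the coordinate hyperplanes). Hence $D_\rho^\circ$ is the disjoint union of all orbits $O_\sigma$ for cones $\sigma$ generated by $v_\rho$ together with some subset $S \subset \{e_0,\ldots,e_n\}$. Computing $\chi(O_\rho \setminus \tX)$ alone yields contributions only from the $n$-dimensional facets of $\Gamma(f)$; the lower-dimensional coordinate facets come precisely from these boundary orbits, which your proposal omits.

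There is no ``telescoping that cancels non-coordinate faces'' as you describe. Rather, for each boundary orbit $O_\sigma \cong (\C^*)^{n-|S|}$, the intersection $\tX \cap O_\sigma$ is cut out by the face polynomial associated with $K_f(v_\rho) \cap \R^{\{0,\ldots,n\}\setminus S}$, and $\chi(O_\sigma \setminus \tX)$ is nonzero exactly when this face has dimension $n-|S|$, i.e.\ when it is a coordinate facet. So the restriction to coordinate facets is automatic once all orbits are accounted for. The paper organises this by replacing each such cone $\sigma$ by a primitive vector $v_\sigma \in \bZ_{>0}^{n+1}$ with the coordinates in $S$ set to $\infty$ (the \emph{sedentarity}), and then reindexes the sum in \cref{thmA:tps} over these extended vectors. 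For part~(ii) one also needs the Euler characteristic of the intersection with the strict transform of a generic hyperplane section, and here the paper invokes Oka's formulas from \cite{oka_principal} rather than deriving the mixed-volume expressions from scratch; your sketch would need an analogous input.
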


\begin{block}
The \emph{\L ojasiewicz exponent} $\Loj(f,0)$ of $f$ at $0$ is,
by definition, the
infimum of all $\theta > 0$ such that there exists a $C > 0$ such that
the inquality
\[
  \|x\|^\theta \leq C \|\nabla f\|^2
\]
holds near $0\in \C^{n+1}$.
Teissier proved in \cite{Tei} that
this invariant can be calculated directly as the degree of the
Jacobian polygon minus one (see \cref{def:polygons}).
See \cite{lejjaltei} for more on this topic.
We prove that the degree of the alternating Jacobian polygon
coincides with that of the Jacobian polygon.
The proof of this statement requires an analysis of the
topology of the Milnor fibration, along with that of generic plane sections,
which we expect to be of independent interest.
We study the Morse-Smale complex of a certain Morse function
on the Milnor fiber. It is known that
the Milnor fiber can be constructed as a handlebody, whose
number of handles of index $d$ is $\mu^{(d+1)} + \mu^{(d)}$.
Our Morse function has the same number of Morse points.
In \cite{Tei}, Teissier associates a rate of vanishing to each
branch of the polar curve. Following this construction, we 
define a filtration on the Morse-Smale complex as an Abelian group,
and we show that this is a filtration of the complex.
\end{block}

\begin{thmA}[\ref{lem:trajs} and \ref{cor:filtration}] \label{thmA:trajs}
The vanishing rates associated to each branch of the polar curve
induce an increasing filtration on the Morse-Smale complex
on the Milnor fiber, generated by intersection points with
polar curves in generic plane sections.
\end{thmA}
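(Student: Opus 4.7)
The plan is to realize the Morse--Smale complex via a Lefschetz-type Morse function built iteratively from a flag of generic linear functions, and to use the vanishing rates as an approximate Lyapunov functional for its gradient flow. First, fix a generic linear function $g$ on $\C^{n+1}$ and consider the pencil $g|_F$ on the Milnor fiber $F = f^{-1}(\delta)\cap B_\epsilon$. The complex critical points of $g|_F$ are exactly the intersections $F \cap \Gamma$ with the polar curve $\Gamma$ of $(f,g)$, while the generic fiber $F \cap g^{-1}(c)$ is a Milnor fiber of the plane section $X \cap g^{-1}(c)$, which carries its own polar-curve critical structure with respect to a further generic linear function. Iterating this construction through a flag of generic linear functions produces a Morse function $\varphi$ on $F$ whose critical points are in bijection with the handles of the decomposition described above, one contribution from each polar curve in each codimension.

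Second, I would compute the asymptotic scale of critical values. For each branch $\gamma$ of any such polar curve with parametrization $(f,g')|_\gamma \sim (at^m, bt^n)$, the $m$ intersection points $\gamma \cap F$ have $|g'| \sim |\delta|^{\theta}$, where $\theta = n/m$ is Teissier's vanishing rate. Hence for $|\delta|$ small enough, the critical points of $\varphi$ cluster into disjoint concentric annular regions indexed by $\theta$: a larger $\theta$ corresponds to a smaller $|g'|$, hence to a critical point closer to the base of the pencil. Choosing $\varphi$ so that its gradient flow changes $|g'|$ monotonically near each branch (for instance as a suitable composition involving $|g'|$ at each codimension, glued through the Lefschetz model), one obtains a natural filtration $F_s C_*$ of the Morse--Smale chain complex, where $F_s C_*$ is generated by those critical points lying on polar-curve branches whose vanishing rate is compatible with $s$ under the monotonicity direction. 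This is manifestly an exhaustive increasing filtration on the chain groups.

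The main task is to show that the Morse--Smale differential $\partial$ counting rigid gradient trajectories satisfies $\partial(F_s C_*) \subset F_s C_{*-1}$. The key estimate is local: after putting $f$ and $g'$ into Puiseux coordinates near each branch $\gamma$, the gradient flow of $\varphi$ is a controlled perturbation of the model flow on the disk $\{a t^m = \delta\}$, which preserves $|t|$---and hence $|g'|$---to leading order. Combined with the annular clustering above, this forces every rigid trajectory to connect critical points whose vanishing rates are ordered compatibly with the filtration: a trajectory crossing between annuli of incompatible rates would have to traverse a region of width of order $|\delta|^{\theta_1} \gg |\delta|^{\theta_2}$ containing no critical structure while sustaining $\varphi$-monotonicity in the wrong direction. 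The hardest part will be making this normal-form estimate uniform across all polar curves of all generic plane sections simultaneously, and propagating the Lyapunov property through the iterated Lefschetz pencil; boundary behaviour at $\partial F$ is handled by the standard outward transversality of the Milnor fibration.
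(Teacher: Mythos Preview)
Your setup --- an iterated Lefschetz pencil producing a Morse function $\psi^{(d+1)} = |g^{(d+1)}|^2 + \nu\,\tilde\psi^{(d)}$ with index-$d$ critical points at $F \cap P^{(d+1)}$ --- is exactly the paper's construction. The gap is in the trajectory estimate. Your annular-clustering and Puiseux arguments control $|g^{(d+1)}|$ along a trajectory, but the inequality you need compares the rate $\alpha_q^{(d+1)}$ (built from $g^{(d+1)}$) with the rate $\alpha_r^{(d)}$ of the lower-index endpoint, which is built from the \emph{different} linear function $g^{(d)}$. Every point of $F^{(d)} \subset F^{(d+1)}$ has $g^{(d+1)} = 0$, so monotonicity of $|g^{(d+1)}|$ places no constraint whatsoever on $\alpha_r^{(d)}$; and the Puiseux normal form near a branch of $P^{(d+1)}$ gives you no handle on branches of $P^{(d)}$. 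Your closing sentence (``propagating the Lyapunov property through the iterated Lefschetz pencil'') names precisely this obstacle without supplying any mechanism to cross it.

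The paper supplies that mechanism through an embedded resolution. It introduces the meromorphic functions $\phi_\alpha^{(d+1)} = (g^{(d+1)})^m/(f^{(d+1)})^n$ and $\phi_\beta^{(d)}$, and uses the $A$-separating refinement of \S3 so that their pullbacks have indeterminacy only along lower strata. Given a hypothetical trajectory for each small $z_k \to 0$, one splits it at its exit from a tubular neighborhood of $Y^{(d)}$. The outer piece is a trajectory of a vector field $\zeta$ along which $|\phi_\alpha^{(d+1)}|$ is monotone; letting $z_k \to 0$, its starting point accumulates on a stratum $D_J^\circ$ of the exceptional divisor with $m_i/n_i \le \alpha$ for $i \in J$. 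The inner piece projects, via the product structure of the tube, to a $\xi^{(d)}$-trajectory, and the same argument with $\phi_\beta^{(d)}$ forces $\beta \le m_i/n_i$ on the corresponding stratum of $D^{(d)}$. The two bounds are linked by the identification $m_i = m_{\omega(i)}$, $n_i = n_{\omega(i)}$ under the natural map $\omega\colon I^{(d)} \to I^{(d+1)}$ --- this equality on the resolution is the bridge between codimensions that your local normal-form approach cannot provide.
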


\begin{block}
This result allows us to conclude that the Jacobian and alternating Jacobian
polygons have the same degree. In particular,
we can calculate the \L ojasiewicz exponent
purely in terms of the alternating Jacobian polygon.
In the Newton nondegenerate case, it can be computed in terms of
volumes of coordinate facets. In fact, if $F$ is a coordinate facet,
then it has a unique primitive integral normal vector
\[
  v = (v_0, v_1, \ldots, v_n) \in (\Z_{>0} \cup \{+\infty\})^{n+1}.
\]
Define the \emph{maximal axial number} associated with $v$ and $f$
as the weight of $f$ with respect to $v$, divided by that of a generic
linear function.
The maximal axial number of a coordinate facet is that of its normal vector.
For facets, this invariant of a Newton diagram been studied in
\cite{bko_conj}.
By defining $s_\alpha(\Gamma(f)) \subset \Gamma(f)$ as the union
of faces supported by weight vectors having maximal axial number
$\leq \alpha$, for $\alpha \in \Q$,
we have an identification of the \L ojasiewicz exponent
in terms of Newton numbers of these subdiagrams.
\end{block}

\begin{thmA}[\ref{thm:bko_statement}] \label{thmA:bko}
With the possible exception of Morse points in an even number of variables,
the \L ojasiewicz exponent of an isolated Newton nondegenerate
singularity is the maximal axial number of some coordinate facet
of its Newton diagram. It can be recovered as
\[
  \Loj(f,0)
  =
  \min\set{ \alpha - 1 \in \Q }
          { \nu(s_\alpha(\Gamma(f))_-) = \nu(\Gamma_-) }.
\]
where $\nu$ is the Newton number.
\end{thmA}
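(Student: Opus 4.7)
The plan is to combine all three main theorems of the paper. By Teissier's identification, $\Loj(f,0) = \deg \TPS(f,0) - 1$, and by the topological analysis culminating in \cref{thmA:trajs}, the degrees of $\TPS(f,0)$ and $\tTPS(f,0)$ coincide. So it suffices to compute $\deg \tTPS(f,0)$ combinatorially in terms of $\Gamma(f)$ and to match the result to the Newton-number formula on the right-hand side.

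By \cref{thm:tps_nondeg}, in the Newton nondegenerate case one has
\[
  \tTPS(f,0) = \sum_{F} c_F \newpol{m_F}{n_F}
\]
in $K\new$, indexed by coordinate facets $F$ of $\Gamma(f)$, where $c_F$ is a signed (mixed) volume and $m_F/n_F = \alpha_F$ is the maximal axial number attached to the primitive inward normal of $F$. The degree of a formal difference in $K\new$ is the largest $m/n$ for which the net coefficient on $\newpol{m}{n}$ does not vanish. I would then argue that, setting $\alpha^{\ast} := \max\{\alpha_F : c_F \neq 0\}$, the positivity of top-dimensional facet volumes prevents cancellation at the summit of the polygon, so that $\deg \tTPS(f,0) = \alpha^{\ast}$. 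The exceptional case, corresponding to a Morse singularity in an even number of variables, would be handled by direct inspection of $\Gamma(x_0^2 + \cdots + x_n^2)$, where the symmetric alternating sum causes a genuine cancellation even though $\Loj(f,0) = 1$.

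The remaining step is purely combinatorial: identify $\alpha^{\ast}$ with the minimum $\alpha$ satisfying $\nu(s_\alpha(\Gamma(f))_-) = \nu(\Gamma_-)$. By definition, $s_\alpha(\Gamma(f))$ collects precisely those faces whose supporting weight vectors have maximal axial number at most $\alpha$. Using Kouchnirenko's formula together with the inclusion--exclusion that underlies \cref{thm:tps_nondeg}, I would express the defect $\nu(\Gamma_-) - \nu(s_\alpha(\Gamma(f))_-)$ as a signed volume combination of those coordinate facets $F$ with $\alpha_F > \alpha$, arranged so that the defect vanishes precisely when every such $F$ satisfies $c_F = 0$. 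This gives the desired minimum characterization.

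\textbf{Expected main obstacle.} The delicate part is the combinatorial step, namely showing that the condition ``$c_F \neq 0$'' on a coordinate facet is equivalent to ``including $F$ strictly increases the Newton number of $s_\alpha(\Gamma(f))_-$''. This ties the mixed-volume algebra appearing in \cref{thm:tps_nondeg} to the Kouchnirenko computation of $\nu$, and must carefully account for a facet's volume cancelling against those of its subfacets in the alternating expansion. One must also verify that the even-variable Morse case is the \emph{only} obstruction to a clean identification, so that it may legitimately be isolated in the statement; this should follow from a direct check that, in all other configurations, at least one maximal-axial-number facet carries a strictly positive alternating coefficient.
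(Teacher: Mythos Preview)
Your outline tracks the paper's strategy—combine Teissier's identity $\Loj(f,0)=\deg\TPS(f,0)-1$, the equality $\deg\TPS=\deg\tTPS$ coming from the Morse-theoretic filtration, and the coordinate-facet expansion of $\tTPS$—but there is a genuine gap in the second half, and the first half is over-argued.

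For the existence of a coordinate facet $F$ with $\Loj(f,0)=\maxax(F)-1$, you are working too hard. Once $\Loj(f,0)+1=\deg\tTPS(f,0)$ is established, the formula of \cref{thm:tps_nondeg} shows $\supp(\tTPS(f,0))\subset\{\maxax(F):F\in\CF\}$, so $\deg\tTPS(f,0)$ is automatically equal to $\maxax(F)$ for some $F$. Your claim that ``positivity of top-dimensional facet volumes prevents cancellation at the summit'' is neither needed nor true: the paper's own counterexample (\cref{example:counter}) has a facet $F_2$ with $\maxax(F_2)=3$ while $\Loj(f,0)=1$, so that facet's contribution \emph{is} cancelled in $\tTPS$.

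The real gap is in the Newton-number step, which you describe as ``purely combinatorial''. What is combinatorial is the identification of the defect $\nu(\Gamma_-)-\nu(s_\alpha(\Gamma)_-)$ with the length $\ell\bigl(\tTPS_{>\alpha}(f,0)\bigr)$; this follows from \cref{thm:tps_nondeg} and the definition of $\nu$. But to conclude that the defect vanishes exactly when $\alpha\ge\Loj(f,0)+1$, you need that $\ell(\tTPS_{\geq\beta}(f,0))\ge 0$ for every $\beta$, with equality precisely when $\beta>\deg\TPS(f,0)$. This is \cref{cor:lc_degs}\cref{it:ttps_ineq}, itself a consequence of \cref{thm:mono} and ultimately of the filtration result \cref{lem:trajs}; it is topological, not combinatorial. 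Your proposed replacement—that the defect vanishes iff every individual $c_F$ with $\alpha_F>\alpha$ vanishes—is false, since the $c_F$ carry alternating signs $(-1)^{n-s}$ and may well cancel. The whole point is that cancellation \emph{in the length} is ruled out, and that is exactly what the Morse-theoretic machinery delivers. You already cite \cref{thmA:trajs} for the equality of degrees; the same package gives you the nonnegativity you need here, but you must invoke it explicitly rather than hoping for a combinatorial shortcut.
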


\begin{rem}
Let $F$ be the Milnor fiber of an isolated singularity
$(X,0) \subset (\C^{n+1})$ defined by $f \in \C\{x_0,\ldots,x_n\}$.
Then we have
$(-1)^{n+1}\chi(F) \geq 0$, with equality if and only if $(X,0)$
is Morse, i.e. the Hessian matrix of $f$ at $0$ is invertible,
and $n+1$ is even.
\end{rem}

\begin{block}
In \cref{s:ppf} we recall the conjecture of
Brzostowski, Krasiński and Oleksik \cite{bko_conj}
on the \L ojasiewicz exponent of a Newton nondegenerate singularity $f$
which greatly motivated this manuscript.
As pointed out by the same authors in
\cite{bko_surface},
Arnold suggests in \cite[1975-1]{arnolds_problems} that
\emph{``Every interesting discrete invariant of a generic singularity
with Newton
polyhedron $\Gamma$ is an interesting function of the polyhedron.''}
In the current case of study, the discrete invariant is the \L ojasiewicz
exponent. 
The conjecture says that if $\Gamma(f)$ has a \emph{nonexceptional}
facet (see \cref{def:bko_exc}), then the \L ojasiewicz exponent of $f$ is
the largest maximal axial number of a nonexceptional facet minus one.
Stated differently, for any facet $F\subset\Gamma(f)$
of the Newton diagram, the
conjecture gives a simple condition, which only depends on $F$, and not
$\Gamma(f)$,
determining whether $F$ should be discarded or not.
Unless all facetes have been discarded this way, the conjecture says that
the \L ojasiewicz exponent is the largest of the maximal axial numbers
of remaining facets, minus one.
They proved the conjecture in the case $n=2$.
We give a counterexample to this conjecture with $n=3$.
In light of \cref{thmA:bko},
we ask what condition characterizes those \emph{coordinate facets}
of $\Gamma(f)$ (not just \emph{facets})
to include or exclude in determining the \L ojasiewicz exponent.
Motivated by the theory of local $h$-polynomials
\cite{stanley_subdivisions,selyanin_mono},
we define a subset $\CF_\mathrm{ne}^\T$ of the set $\CF(\Gamma(f))$
of coordinate facets of $\Gamma(f)$ which depends on a triangulation
$\T$ of $\Gamma(f)$.
\end{block}

\begin{conjA} \label{conjA:conj}
Let $f \in \C\{z_0,z_1,\ldots,z_n\}$
be Newton nondegenerate, defining an isolated singularity, and
let $\T$ be a triangulation of its Newton diagram $\Gamma(f)$.
Then the formula
\[
  \Loj(f,0) = \max_{F \in \CF_{\mathrm{ne}}^\T} \maxax(F) - 1.
\]
holds, with the possible exception of a Morse point in an even
number of variables.
\end{conjA}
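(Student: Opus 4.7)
The natural starting point is \cref{thmA:bko}, which already reduces the computation of $\Loj(f,0)$ to a combinatorial problem on the Newton diagram: we need to find
\[
  \alpha^\ast := \min\set{\alpha \in \Q}{\nu(s_\alpha(\Gamma(f))_-) = \nu(\Gamma_-)}
\]
and show that $\alpha^\ast = \max_{F \in \CF_{\mathrm{ne}}^\T}\maxax(F)$, modulo the Morse-in-even-variables caveat. Since $s_\alpha(\Gamma(f))$ is built from faces whose supporting weight vectors have maximal axial number $\leq \alpha$, and every coordinate facet $F \subset \Gamma(f)$ supports a unique primitive weight vector, the minimum over $\alpha$ can a priori be taken over the finite set $\set{\maxax(F)}{F \in \CF(\Gamma(f))}$. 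The task is thus to prove that, at $\alpha = \alpha^\ast$, the coordinate facet ``responsible'' for the jump in the Newton number belongs to the subset $\CF_{\mathrm{ne}}^\T$, and conversely that every $F \in \CF_{\mathrm{ne}}^\T$ contributes a genuine jump.

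The plan is to analyze the function $\alpha \mapsto \nu(\Gamma_-) - \nu(s_\alpha(\Gamma(f))_-)$ through the lens of the triangulation $\T$. Varchenko-type decomposition of the Newton number along $\T$ should express $\nu(\Gamma_-)$ as a signed sum of normalized volumes of the simplices of $\T$, weighted by combinatorial quantities that, after suitable regrouping by the link structure of $\T$, assemble into local $h$-polynomial contributions of the star of each coordinate face. The key structural input is Stanley's nonnegativity of local $h$-vectors \cite{stanley_subdivisions,selyanin_mono}: it guarantees that the contribution of each coordinate face $F$ is nonnegative and, crucially, is strictly positive precisely for those $F$ one designates as belonging to $\CF_{\mathrm{ne}}^\T$. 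In this way, $\nu(\Gamma_-) - \nu(s_\alpha(\Gamma(f))_-)$ becomes a sum of manifestly nonnegative terms, one per element of $\CF_{\mathrm{ne}}^\T$ with $\maxax(F) > \alpha$, vanishing exactly when $\alpha \geq \max_{F \in \CF_{\mathrm{ne}}^\T} \maxax(F)$.

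Once this decomposition is in place, the proof proceeds in two directions. First, choosing $\alpha = \max_{F \in \CF_{\mathrm{ne}}^\T}\maxax(F)$ one has $\nu(s_\alpha(\Gamma(f))_-) = \nu(\Gamma_-)$ by the vanishing argument above, giving $\alpha^\ast \leq \alpha$. Second, for any smaller value $\alpha' < \alpha$, one exhibits an $F \in \CF_{\mathrm{ne}}^\T$ with $\maxax(F) > \alpha'$ whose local $h$-contribution is strictly positive, hence $\nu(s_{\alpha'}(\Gamma(f))_-) < \nu(\Gamma_-)$, so $\alpha^\ast \geq \alpha$. Together with \cref{thmA:bko}, these inequalities yield the conjectured identity; the Morse/even-variables exception is inherited directly from the corresponding caveat in \cref{thmA:bko}.

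\textbf{Main obstacle.} The heart of the matter — and the step I expect to be genuinely hard — is the precise identification of the combinatorial set $\CF_{\mathrm{ne}}^\T$ with the coordinate facets whose local $h$-contribution to $\nu(\Gamma_-) - \nu(s_\alpha(\Gamma(f))_-)$ is nonzero. While the nonnegativity of the local $h$-vector is classical and gives the right conceptual framework, the subtle point is that the natural local $h$-decomposition of a Newton diagram is adapted to the facets of the \emph{full} diagram in $\R_{\geq 0}^{n+1}$, not to its \emph{coordinate} facets, and must be refined so as to be compatible with the filtration by $s_\alpha$. Establishing this compatibility, and showing that the $\T$-dependent set $\CF_{\mathrm{ne}}^\T$ is independent of $\T$ to the extent needed for the conjecture (i.e. that the maximum of $\maxax$ over $\CF_{\mathrm{ne}}^\T$ does not depend on $\T$), is where the technical heavy lifting will take place. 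The counterexample in \cref{example:counter} to the Brzostowski–Krasi\'nski–Oleksik conjecture will be a crucial sanity check: it should precisely exhibit a facet whose local $h$-contribution vanishes for every triangulation, explaining why it must be excluded from $\CF_{\mathrm{ne}}^\T$ even though the earlier conjecture would include it.
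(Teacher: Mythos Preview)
The statement you are addressing is a \emph{conjecture} in the paper, stated as \cref{conjA:conj} in the introduction and restated at the end of \cref{s:ppf}; the paper offers no proof, only a heuristic motivation. There is therefore no argument of the paper's to compare your proposal against, and what you have written is not a reconstruction of a missing proof but an outline of how one might attack an open problem.

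That said, your strategy is closely aligned with the paper's own motivation. In \cref{s:ppf} the alternating Jacobian polygon is rewritten via the triangulation as
\[
  \tTPS(f,0)=\sum_{T\in\T\cup\{\emptyset\}}\capno(T)\,\CNN(\T/T),
\]
the trivial bound $\deg\tTPS(f,0)\le\max_T\deg\CNN(\T/T)$ is observed, and equality is conjectured, with nonnegativity of local $h$-vectors cited as inspiration. Your Newton-number formulation is essentially the length of this identity, so the two viewpoints coincide. The obstacle you single out---showing that the contributions at the maximal $\maxax$ do not cancel, equivalently that the local-$h$ decomposition is compatible with the filtration by $s_\alpha$---is precisely what the paper leaves unresolved and is the actual content of the conjecture. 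You have correctly located the difficulty but not removed it.

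One further caution regarding $\T$-dependence: \cref{ex:twosimp} exhibits two triangulations of the same diagram for which the sets $\CF_{\mathrm{ne}}^\T$ differ (a single facet versus all coordinate facets). This confirms your suspicion that only the maximum of $\maxax$ over $\CF_{\mathrm{ne}}^\T$, not the set itself, can be expected to be invariant; any eventual proof must accommodate this genuine dependence rather than argue it away.
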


\sectionbreak

\begin{block}
In \cref{s:polygon}, we review the notation for Newton
polygons introduced by Teissier, along with some basic definitions.

In \cref{s:res}, we prove a technical result on resolutions which
we apply in proofs in the following sections, based
on the notion of toroidal embeddings \cite{KKMSD}.

In \cref{s:jacob} we review Teisser's definition of the Jacobian
polygon, and fix related notation for the polar curve.
We also define the alternating Jacobian polygon, and prove
a formula for both polygons in terms of an embedded resolution and
a generic linear function.

In \cref{s:vanishing} we construct a Morse function on the Milnor fiber,
whose critical points coincide with intersection points with polar curves.
The main technical result of this section, \cref{lem:trajs}, shows that
we can
endow the associated Morse-Smale complex with an increasing
filtration, using Teissier's \emph{vanishing rate} \cite{Tei},
which is a rational number associated with each polar branch,

In \cref{s:nonneg} we obtain a string of corollaries of the above result.
Of particular interest is \cref{cor:lc_degs}\cref{it:both_degs}, which
says that the Jacobian and alternating Jacobian have the same degree.
As a result,
the alternating Jacobian polygon determines the \L ojasiewicz exponent.

In \cref{s:nondeg}, we consider the case
of a Newton nondegenerate singularity with Newton diagram $\Gamma(f)$.
Using the results in
\cref{s:jacob} we give formulas for the Jacobian and
alternating Jacobian polygons in terms of the Newton diagram.

In \cref{s:loj}, we obtain a formula
for the \L ojasiewicz exponent for a Newton nondegenerate
singularity in terms Newton numbers of
subdiagrams (the Newton number was introduced by Kouchnirenko
in \cite{kouchnirenko}, see \cref{def:newton_number}).
\end{block}

\begin{ack}
I would like to thank
{\fontencoding{T5}\selectfont
Nguyễn Hồng Đức,
Nguyễn Tất Thắng,
}
Pablo Portilla Cuadrado,
Bernard Teissier,
and
Mark Spivakovsky
for useful discussions which contributed to this work.
\end{ack}

\section{The group of Newton polygons} \label{s:polygon}

\begin{block}
In this section we fix notation for Newton polygons, seen
as additive invariants, as in \cite{Tei,teissier_jacobian}.
\end{block}

\begin{block}
We will denote by $\new$ the set of Newton polygons in $\R^2$. Thus,
an element of $\new$ is any integral polygon contained in
$\R^2_{\geq 0}$, whose recession cone is $\R^2_{\geq 0}$.
In particular, if $m,n$ are any positive two numbers, then we denote by
\begin{equation} \label{eq:newpolyg}
  \newpol{m}{n}
\end{equation}
the Newton polygon of the polynomial $x^m + y^n$.
Extending either $a$ or $b$ to infinity, and imagining that
$x^\infty = y^\infty = 0$, we define the polygons
\begin{equation} \label{eq:newpolyg_infty}
  \newpol{\infty}{n}, \qquad
  \newpol{m}{\infty}
\end{equation}
associated with the polynomials $y^n$ and $x^m$.
The set $\new$ has an operation which we denote by $+$,
the Minkowski sum, given by
\[
  \Gamma + \Delta = \set{g+d\in\R^2}{g\in \Gamma,\;d\in\Delta}.
\]
This way, we have a cancellative and commitative semigroup
$(\new,+)$,
generated by the
symbols \cref{eq:newpolyg,eq:newpolyg_infty}, modulo the relations
\[
  \newpol{cm}{cn} = c\newpol{m}{n},\qquad c\in \Z_{>0}.
\]
In particular, as a commutative semigroup, $\new$ is freely generated
by the symbols \cref{eq:newpolyg} for $m,n\in\Z_{>0}$ and
$\gcd(m,n) = 1$, as well as the two elements \cref{eq:newpolyg_infty}
with $m = 1$ and $n = 1$. Note that we simply set $c\infty = \infty$
for any positive integer $c$.

We denote by $\Knew$ the Grothendieck group of $\new$. This group then
has the same description as $\new$ in terms of generators and
relations, only as an Abelian group, rather than a commutative semigroup.
In particular, any element $A \in \Knew$ has a unique presentation as
\begin{equation} \label{eq:A_expansion}
  A = \sum_{\alpha \in \bQ_{\geq 0}} a_\alpha \{\alpha\},
\end{equation}
where the family of integers
\[
  (a_\alpha)_{\alpha \in \bQ_{\geq 0}}, \qquad
  \bQ_{\geq 0} = \Q_{\geq 0} \cup \infty
\]
has finite support, and we set
\[
  \{\alpha\} = \newpol{m}{n}, \qquad
  \mathrm{if} \qquad
  \alpha = \frac{m}{n} \in \Q_{>0} \qquad
  \mathrm{with} \qquad
  \gcd(m,n) = 1
\]
and
\[
  \{0\} = \newpol{1}{\infty},\qquad
  \{\infty\} = \newpol{\infty}{1}.
\]
\end{block}


\begin{definition} \label{def:polygons}
\begin{itemize}
\item
The \emph{support} of an element $A \in \Knew$ expanded as
in \cref{eq:A_expansion} is
\[
  \supp(A)
  = \set{\alpha \in \bQ_{\geq0}}{a_\alpha \neq 0}
  \subset \bQ_{\geq 0}.
\]
\item
The \emph{degree} of an element $A \neq 0$ is
\[
  \deg(A)
  =
\begin{cases}
  \max \supp(A) & A \neq 0,\\
  -\infty       & A = 0.
\end{cases}
\]
\item
If $A \neq 0$, then the \emph{leading coefficient} of $A$ is
\[
  \lc(A) = a_\alpha.
\]
where $\alpha = \deg(A)$.
\item
If $\alpha \in \bQ_{\geq 0}$,
then the \emph{truncation of $A$ at $\alpha$} is
\[
  A_{\geq \alpha}
  = \sum_{\beta \geq \alpha} a_\beta \{\beta\}
  \in \Knew.
\]
\item
The \emph{height} $h(A) \in \bQ_{\geq 0}$ and \emph{length}
$\ell(A) \in \bQ_{\geq 0}$ of $A$ are defined by extending linearly
the functions
\[
  h\left(\newpol{m}{n}\right) = n,\qquad
  \ell\left(\newpol{m}{n}\right) = m.
\]
Note that this way, we have $h(A) = \pm \infty$
if and only if $a_0 \in \pm \Z_{>0}$,
and $\ell(A) = \pm \infty$ if and only if $a_\infty \in \pm \Z_{>0}$.
\end{itemize}
\end{definition}

\begin{block} \label{block:vertices}
In the above language, the vertices of a polygon $A \in \new$ are
precisely the points
\[
  \left(
    \ell(A_{<\alpha}), h(A_{\geq \alpha})
  \right), \qquad
  \alpha \in \Q_{>0}
\]
with adjacent points joined by an edge.
If $A \in K\new$, then these points and edges
 are the \emph{virtual vertices} and
\emph{virtual edges} of $A$.
\end{block}

\section{Embedded resolutions, coordinates and polar} \label{s:res}

\begin{block}
In this section we assume that $f \in \O_{\C^{n+1},0}$ is an
analytic function germ with an isolated critical point at the origin.
We describe some notation and conditions on an embedded resolution
of $f$.
\Cref{lem:Asep} is a technical result
which is used in the following sections.
\end{block}

\begin{block} \label{block:x_H}
Let $x_0, x_1, \ldots, x_n$ be linear coordinates in $\C^{n+1}$, i.e.
a basis of the dual space.
For $d = 0,1,\ldots,n$, define a linear subspace
\[
  H^{(d+1)} = \set{x\in \C^{n+1}}{x_{d+1} = \ldots = x_n = 0}.
\]
and set
\[
  f^{(d+1)} = f|_{H^{(d+1)}},\qquad
  g^{(d+1)} = x_d|_{H^{(d+1)}}.
\]
This way we have hypersurfaces $(X^{(d+1)},0) \subset (H^{(d+1)},0)$
defined by $f^{(d+1)}$, along with linear functions $g^{(d+1)}$.

If $\pi:Y \to \C^{n+1}$ is an embedded resolution of $f$, then
we denote by $Y^{(d+1)}$ the strict transform of $H^{(d+1)}$
in $Y$, and by
\[
  \pi^{(d+1)}:Y^{(d+1)} \to H^{(d+1)}
\]
the modification induced by $\pi$. Denote by and $D^{(d+1)} \subset Y^{(d+1)}$
its exceptional divisor. In particular, we have
$\C^{n+1} = H^{(n+1)}$
and
$Y = Y^{(n+1)}$
and
$\pi = \pi^{(n+1)}$.
Denote also by $\tX^{(d+1)}$ the strict transform of $X^{(d+1)}$
in $Y^{(d+1)}$. Note that $\tX^{(d+1)}$ is not necessarily smooth, under
the conditions considered so far.

For $d=0,1,\ldots,n$, denote by $P^{(d+1)}$ the polar curve of $f^{(d+1)}$
with respect to the hyperplane $H^{(d)} \subset H^{(d+1)}$, that is
\[
  P^{(d+1)}
  = P^{(d+1)}_1 \cup \ldots \cup P^{(d+1)}_{l^{(d+1)}}
  = \set{x\in H^{(d+1)}}
        {\partial_{x_0}f = \ldots = \partial_{x_{d-1}}f = 0}.
\]
If $P^{(d+1)}$ is a curve, denote its
branches by $P^{(d+1)}_1,\ldots,P^{(d+1)}_{l^{(d+1)}}$.
We also set $P = P^{(n+1)}$.

By \cite{Tei}, there exist numbers
\[
  \mu^{(d+1)}(f,0) \in \Z_{>0}, \qquad d=0,1,\ldots,n
\]
such that for a generic choicie of coordinates,
$P^{(d+1)}$ is a reduced curve, and
\[
  \mu(f|_{H^{(d+1)}},0) = \mu^{(d+1)}(f,0).
\]
Let us say that such coordinates are
\emph{generic with respect to $f$}.
\end{block}

\begin{definition}
If $D_i\subset Y$ is a component of the exceptional divisor of a modification
$\pi:Y\to\C^{n+1}$,
and $h \in \O_{\C^{n+1},0}$, then we define $\ord_{D_i}(h)$ as the 
order of vanishing of $\pi^*h$ along $D_i$. If
$\aaa \subset \O_{\C^{n+1},0}$ is an ideal, then we set
\[
  \ord_{D_i^{(d+1)}}(\aaa)
  =
  \min\set{\ord_{D^{(d+1)}_i}(h)}{h \in \aaa}.
\]
\end{definition}

\begin{definition} \label{def:generic_Y}
Let $\pi:(Y,D) \to (\C^{n+1},0)$ be a point modicfication of $\C^{(n+1)}$
at $0$, whose exceptional divisor consists of components $D_i$, $i\in I$.
Let $x_0,\ldots,x_n$ and $H^{(d+1)}$ be as in \cref{block:x_H}.
\begin{enumerate}

\item
A function $h \in \m_{\C^{n+1},0}$ is
\emph{generic with respect to $\pi$} if
\[
  \fa{i\in I}{ \ord_{D_i}(h) = \ord_{D_i}(\m_{\C^{n+1},0}) }.
\]

\item
The coordinates $x_0,x_1,\ldots,x_n$ in $\C^{n+1}$ are generic
with respect to $\pi$ if for all $d = 0,1,\ldots,n$, the function
$g^{(d+1)} \in \m_{H^{(d+1)},0}$
is generic with respect to $\pi^{(d+1)}$, and for
$d>0$ the divisor
\[
  D^{(d+1)} \cup \tilde X^{(d+1)} \cup Y^{(d)} \subset Y^{(d+1)}
\]
is a normal crossing divisor.

\end{enumerate}
\end{definition}

\begin{lemma}
If $\pi$ factors through the blow-up of $\C^{n+1}$ at $0$, then
the set of coordinates generic with respect to $\pi$ contains a dense
Zariski open subset of $\GL(\C^{n+1})$.
\end{lemma}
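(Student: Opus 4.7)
The plan is to verify each of the finitely many defining conditions in \Cref{def:generic_Y} separately, show each is satisfied on a Zariski open dense subset of $\GL(\C^{n+1})$, and take their intersection.

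For condition (i), the key input is the hypothesis that $\pi$ factors through $\Bl_0(\C^{n+1}) \to \C^{n+1}$. Since the strict transform in $\Bl_0(\C^{n+1})$ of the linear subspace $H^{(d+1)} \subset \C^{n+1}$ coincides with $\Bl_0(H^{(d+1)})$, the induced modification $\pi^{(d+1)}$ likewise factors through $\Bl_0(H^{(d+1)})$. On the latter, the pullback of $\m_{H^{(d+1)},0}$ is the invertible ideal sheaf of the exceptional divisor $\P^d$, and any linear form $h$ on $H^{(d+1)}$ pulls back to this ideal times a residual linear form on $\P^d$. For each exceptional component $D_i^{(d+1)}$ of $\pi^{(d+1)}$, the equality $\ord_{D_i^{(d+1)}}(h) = \ord_{D_i^{(d+1)}}(\m_{H^{(d+1)},0})$ then amounts to the residual linear form being nonvanishing at the generic point of the image of $D_i^{(d+1)}$ in $\P^d$; this excludes a proper linear subspace of the space of linear forms on $H^{(d+1)}$. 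Intersecting the finitely many such conditions (over the components $D_i^{(d+1)}$ and over $d = 0,1,\ldots,n$) preserves Zariski openness and density.

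For condition (ii), I would argue by downward induction on $d$. The base case $d = n$ is just the NCD condition on $D \cup \tX \subset Y$, which is part of the data of an embedded resolution. For the inductive step, assuming the property holds at level $d+1$, a sufficiently generic hyperplane $H^{(d)} \subset H^{(d+1)}$ through $0$ has strict transform $Y^{(d)} \subset Y^{(d+1)}$ that is smooth and meets the existing NCD $D^{(d+1)} \cup \tX^{(d+1)}$ transversally; both are standard Bertini-type assertions applied to the restriction of $\pi^{(d+1)}$, yielding a Zariski open dense condition on the choice of $H^{(d)}$.

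The main obstacle is compatibility: a single basis $(x_0,\ldots,x_n)$ simultaneously prescribes the entire flag $H^{(1)} \subset \ldots \subset H^{(n+1)}$ and all linear forms $g^{(d+1)}$, so the individual genericity conditions cannot be imposed independently on each datum. This is resolved by noting that each condition above defines a Zariski open dense subset of the appropriate parameter space (linear forms on $H^{(d+1)}$, or hyperplanes in $H^{(d+1)}$), and these parameter spaces all receive dominant polynomial maps from $\GL(\C^{n+1})$; hence the pullback of each open dense subset to $\GL(\C^{n+1})$ remains Zariski open and dense, and so does their finite intersection.
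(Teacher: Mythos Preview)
Your proposal is correct and follows essentially the same approach as the paper, which simply invokes Bertini's theorem for the linear system of hyperplanes; you have unpacked this one-liner by treating condition (ii) via the standard Bertini transversality argument in downward induction and condition (i) via the factorization through the blow-up, both of which are what the paper's terse proof leaves implicit.
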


\begin{proof}
This follows by applying Bertini's theorem to the linear system of
hyperplanes.
\end{proof}

\begin{block} \label{block:emb_res}
We will assume from now on that
$Y\to \C^{n+1}$ is an embedded resolution of $f$, and that
$x_0,\ldots,x_n$ are coordinates
in $\C^{n+1}$ which are generic with respect to
$f$ and $\pi$.
Decompose the exceptional divisor into irreducible
components as $D = \cup_{i\in I} D_i$, and set
\[
  D_J = \bigcap_{j\in J} D_j,\qquad
  D^\circ_J = D_J \setminus \bigcup_{j \notin J} D_j.
\]
for any $J \subset I$. For any $d=0,1,\ldots,n$, denote by
\[
  D^{(d+1)} = \bigcup_{i \in I^{(d+1)}} D_i
\]
the exceptional divisor of $\pi^{(d+1)}$, decomposed in its
irreducible components. We will assume that the sets $I^{(d+1)}$, for
different $d$, are chosen disjoint, so that we can freely refer
to $D_i \subset Y^{(d+1)}$, with $d$ depending on $i$.
We then define $D^{(d+1)}_J$ and $D^{(d+1)\circ}_J$ for $J \subset I^{(d+1)}$
similarly.
Since $D^{(d+1)} \cup Y^{(d)} \subset Y^{(d+1)}$
is a normal crossing divisor, we have well defined maps
\[
  \omega^{(d+1)} : I^{(d)} \to I^{(d+1)},\qquad
\]
definde by the condition that
\[
  D^\circ_i \subset D_{\omega(i)}^\circ.
\]
If $h \in \O_{\C^{n+1},0}$, then we define
\[
  \ord_{D_i}(h)
\]
the order of vanishing of $\pi^*(h)$ along $D_i$.
For any $d=0,1,\ldots,n$ and
$i \in I^{(d+1)}$, denote by $m_i, n_i$ the orders of vanishing of
$\pi^*f, \pi^*g$ along $D_i$, respectively, and set $\alpha_i = m_i / n_i$.
As divisors, we then have
\begin{equation} \label{eq:mn}
  (\pi^{(d+1)*}f^{(d+1)}) = \tX^{(d+1)} + \sum_{i\in I^{(d+1)}} m_i D_i,\qquad
  (\pi^{(d+1)*}g^{(d+1)}) = \tY^{(d)} + \sum_{i\in I^{(d+1)}} n_i D_i.
\end{equation}
Note that, since $D^{(d+1)} \cup \tX^{(d+1)} \cup Y^{(d)}$ is a normal
crossing divisor in $Y^{(d+1)}$, we have, for any $i \in I^{(d)}$
\[
  m_i = m_{\omega^{(d+1)}(i))},\qquad
  n_i = n_{\omega^{(d+1)}(i))}.
\]
If $J \subset I^{(d+1)}$, then we also set
\[
  m_J = \gcd\left(\set{m_j}{j\in J}\right), \qquad
  n_J = \gcd\left(\set{n_j}{j\in J}\right).
\]
\end{block}

\begin{definition}
Let $\alpha \in \Q_{>0}$ be given as a reduced fraction
$\alpha = m/n$ with $m,n > 0$, and $d \in \{0,1,\ldots,n\}$.
We define the memomorphic germ $\phi^{(d+1)}_\alpha$
at $(H^{(d+1)},0)$ as the fraction
\[
  \phi^{(d+1)}_\alpha
  =
  \frac{\left(g^{(d+1)}\right)^m}{\left(f^{(d+1)}\right)^n}.
\]
\end{definition}

\begin{definition}
Let $A \subset \Q_{>0}$ be a finite set of postitive rational numbers.
The resolution $\pi$ is \emph{$A$-separating} if for every
$d=0,1,\ldots,n$, and every $\alpha \in A$,
the indeterminacy locus of the meromorphic function 
\[
  \left(\pi^{(d+1)}\right)^*
  \phi^{(d+1)}_\alpha
\]
is contained in $Y^{(d)} \cap \tilde X^{(d+1)}$.
\end{definition}

\begin{block} \label{block:Asep}
We end this section by recalling a construction from
\cite[Chapter II]{KKMSD}, which allows us to assume $A$-separatedness
for any finite set $A \subset \Q_{>0}$, and some properties.
Let $\pi:Y \to \C^{n+1}$ be a given resolution of $f$,
which factors through the origin, and assume that the coordinates
$x_0,x_1,\ldots,x_n$ in $\C^{n+1}$ are generic with respect to $\pi_0$ and
$f$.
Use the notation introduced so far in this section for $\pi$.
Denote by $\triangle$ and $\hat\triangle$ the
\emph{conical polyhedral complexes}
associated with the toroidal embeddings
\[
  Y \setminus (D \cup \tX) \subset Y, \qquad
  Y \setminus (D \cup \tX \cup \tilde Y^{(n)}) \subset Y,
\]
respectively,
as in \cite[Definition 5, II\textsection 1, p. 69]{KKMSD}.
What this means is the following:
Choose distinct elements $i_X, i_H$ not in in $I^{(d+1)}$ for any $d$,
and set
$D_{i_X} = \tX \subset Y$
and $D_{i_H} = Y^{(n)} \subset Y_0$.
For any $J \subset I \cup \{i_X, i_H\}$ such that $D_J \neq \emptyset$,
we have objects
\[
  N_J = \Z^J, \qquad
  N_{J,\R} = \R^J, \qquad
  \sigma_J = \R^J_{\geq 0},
\]
along with natural inclusions between them whenever $K \subset J$.
Then $\hat\triangle$ is the conical polyhedral complex consisting of
all the cones $\sigma_J$ for such $J$, whereas
$\triangle$ is the subcomplex consisting of cones $\sigma_J$
with $i_H \notin J$.
We also define linear functions
\[
  u_{J,f}, u_{J,g}: N_{J,\R} \to \R
\]
by the requirements
\[
  u_{J,f}(i) = m_i,\qquad
  u_{J,g}(i) = n_i,\qquad
  i \in J.
\]
Note that these are compatible with inclusions $K \subset J$.

Given any subdivision $\triangle_A$ of $\triangle$ into cones,
Mumfords describes a map $Y_A \to Y$
which, in local coordinates compatible
with the divisor $D \cup \tX \cup \tilde Y^{(n)}$, is a toric map
\cite{KKMSD}.
Since this map is an isomorphism outside
$\cup_{i\neq j} (D_i \cap D_j)$, the composed map
$\pi:Y_A \to Y_0 \to \C^{n+1}$ remains an isomorphism outside $0 \in \C^{n+1}$.
Furthermore, if $\pi_A$ is constructed in this way, and $g = x_n$
is generic with respect to $\pi$, then, since $Y^{(n)}$ is transverse
to all the strata $D_J$, with $i_H \notin J$, no exceptional component
of $\pi_A$ in $Y_A$ maps into $Y^{(n)}$ via the map $Y_A \to Y$,
and we find that $g = x_n$ is generic with
respect to $\pi_A$ as well. Finally, since the restriction of
$\pi_A$ to the strict transform $Y_A^{(n)}$ of $H^{(n)}$ in $Y_A$ is in fact
obtained by the same construction, we find a finite iteration that
$x_0,\ldots,x_n$ are generic coordinates with respect to $\pi_A$.

Note that irreducible components $D_i$ of the exceptional divisor of $\pi$
correspond to rays, or one dimensional
cones, in $\triangle$, except for the one generated by $i_X$.
Similarly, irreducible exceptional components $D_{i,A}$
of $\pi_A$, indexed by a set $I_A$, correspond to rays in $\triangle$.
Thus, we have an inclusion $I \hookrightarrow I_A$, such that
if $i \in I$ then the map $Y_A \to Y$ induces a birational morphism
$D_{i,A} \to D_i$.
If $i \in I_A$ does not generate a ray in $\triangle$, then
there is a smallest $J \subset I$ such that
$\sigma_{\{i\}} \subset \sigma_J$, and in this case, the image of
$D_{i,A}$ in $Y$
is $D_J$, which has dimension $n + 1 - |J| < n$.
In either case, the induced map
$D_{i,A}^\circ \setminus \tX_A \to D_J^\circ \setminus \tX$,
where $\tX_A$ is the strict transform of $\tX$ in $Y_A$,
is a locally trivial fiber bundle with fiber $\left(\C^*\right)^{n+1-|J|}$,
which gives
\[
  \chi\left(D_{i,A}^\circ \setminus \tX_A\right)
  =
\begin{cases}
  \chi\left(D_i^\circ \setminus \tX \right) & |J| = 1, \\
  0 & |J| > 1.
\end{cases}
\]

Given a regular subdivision of $\hat \triangle_0$, we obtain a similar map
$\hat Y_A \to Y$ which induces a map
$\hat \pi_A:\hat Y_A \to \C^{n+1}$.
This map may modify the intersection $X \cap H^{(n)}$,
so it is not necessarily a point modification.
Also, we will not consider any modification
of $Y^{(d+1)}$ for $d < n$ induced by this map,
as in the case of $\triangle_A$ above.
For any $J \subset \hat I$, we will denote by
\[
  \hat D_J^\circ \subset \hat Y_A
\]
the corresponding stratum.

Now, given any finite subset $A \subset \Q_{>0}$,
we can choose a regular subdivision of
$\triangle_A$ of $\triangle$ which, for any $\alpha \in A$,
and $\sigma_J \in \hat\triangle$, refines the cone
\begin{equation} \label{eq:alpha_cone}
  \set{v\in \sigma_J}{\alpha u_{J,g}(v) =  u_{J,f}(v_i) }
  \subset \sigma_J.
\end{equation}
Let $Y_A$ be the induced space.
Let $\hat\triangle_A$ be the subdivision of $\hat \triangle$,
obtained as follows:
\begin{itemize}
\item
The cone $\sigma$ generated by $i_X$ and $i_H$ is subdivided so that each cone
\cref{eq:alpha_cone} is an element of this subdivision.
\item
If $i_X\notin J$ and $i_H \notin J$, then the cone $\sigma_J$
is subdivided as in $\triangle_A$. 
\item
Any cone generated by $i_X, i_H$ and another nonempty set $J$
not containing $i_X$ or $i_H$ is subdivided as the join of the two
subdivisions above, which induces a subdivision of cones generated by
$i_X$ or $i_H$ and $J$.
\end{itemize}

The morphism $Y_A \to \C^{n+1}$ is then a point modification, and we
use the notation in \cref{block:emb_res}, with $D_A = \cup_{i\in I_A} D_i$.
For each exceptional divisor $D_i$, with $i \in I_A$, denote by
$\hat D_i$ the corresponding divisor in $\hat Y_A$, i.e.
the strict transform of $D_i$ in $\hat Y_A$, and set
\[
  \hat D_i^\circ
  =
  \hat D_i \setminus \bigcup_{j\in I_A \setminus \{i\}} \hat D_j
\]
Since the map $\hat D_i \to D_i$ only modifies strata of codimension
$\geq 2$, we see that the map
\begin{equation} \label{eq:hatD_iso}
  \hat D_i^\circ \setminus (\hat X_A \cup \hat H_A)
  \stackrel{\cong}{\to}
  D_i^\circ \setminus (\tilde X_A \cup \tilde H_A)
\end{equation}
is an isomorphism. Here $\hat X_A, \hat H_A \subset \hat Y_A$
and $\tilde X_A, \tilde H_A \subset Y_A$ are the strict transforms
of $X,H$.
\end{block}

\begin{lemma} \label{lem:Asep}
Assume the notation introduced in \cref{block:Asep}.
\begin{blist}
\item \label{it:Asep_Asep}
The composed map $\pi_A:Y\to \C^{n+1}$ is an $A$-separating
embedded resolution which
factors through the blow-up of $\C^{n+1}$ at $0$.
\item \label{it:Asep_indet}
For any $\alpha \in A$, the map $\hat\pi$ resolves the indeterminacy
locus of $\phi_\alpha$. In other words, pulling back via $\hat\pi$
gives a genuine map
\[
  \hat \pi^*_A \phi_\alpha:\hat Y_A \to \CP^1.
\]
\item \label{it:Asep_gen}
If the coordinates $x_0,x_1,\ldots,x_n$ are generic with respect to $\pi_0$,
then they are generic with respect to $\pi$.
\item \label{it:Asep_chi}
For every $i \in I_A$, we have
\[
  \chi(D_{i,A}^\circ \setminus \tX)
  =
\begin{cases}
  \chi(D^\circ_{i,0} \setminus \tX_0) & i \in I,\\
  0 & i \notin I.
\end{cases}
\]
\end{blist}
\end{lemma}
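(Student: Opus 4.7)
The plan is to take the four parts in order. Parts (iii) and (iv) are essentially direct consequences of the construction recalled in \cref{block:Asep}, while parts (i) and (ii) require a local toroidal analysis of the meromorphic function $\phi_\alpha = g^m/f^n$ (with $\alpha = m/n$ in lowest terms).

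For (iii), I would argue by downward induction on $d$. Since $g = x_n$ is generic with respect to $\pi_0$, no new exceptional component of $\pi_A$ maps into $Y^{(n)}$ (this is exactly what was observed in \cref{block:Asep}), so $g$ remains generic with respect to $\pi_A$. The restriction of $\pi_A$ to $Y_A^{(n)}$ is obtained from the analogous toric construction applied to $\pi_0^{(n)}$, which gives the induction step and hence genericity of all coordinates $x_0,\ldots,x_n$. For (iv), I would simply quote the fiber bundle description from \cref{block:Asep}: the map $D_{i,A}^\circ \setminus \tX_A \to D_J^\circ \setminus \tX$ is locally trivial with fiber $(\C^*)^{n+1-|J|}$, so its Euler characteristic vanishes whenever $|J|>1$ (equivalently $i \notin I$) and equals $\chi(D_{i,0}^\circ \setminus \tX_0)$ when $|J|=1$ (equivalently $i \in I$).

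For (i), the key content, I would work in local toroidal coordinates around a point of a stratum $D_J^\circ$ in $Y_A$. In such coordinates the pullback of $\phi_\alpha$ is a monomial up to a local unit, and its order along a divisor $D_j$ with $j \in J$ equals $m n_j - n m_j$, i.e.\ the value at $j$ of the linear form $m u_{J,g} - n u_{J,f}$ on $\sigma_J$. Indeterminacy of the induced rational map to $\CP^1$ occurs precisely along those strata where this linear form takes both signs on rays of $\sigma_J$. By the very choice of $\triangle_A$, which refines each $\sigma_J$ along the hyperplane \cref{eq:alpha_cone}, the form has constant sign on every cone of $\triangle_A$ that does not contain both $i_X$ and $i_H$. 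Hence the indeterminacy of $\pi_A^*\phi_\alpha$ is confined to $\tX \cap \tilde Y^{(n)}$, which is exactly the inclusion required for $A$-separateness. The same argument, applied to the restricted fans on each $Y_A^{(d+1)}$, handles all $d$. Factorization through the blow-up of the origin is inherited from $\pi_0$ since $Y_A \to Y_0$ is a further modification. Part (ii) then follows from the same local analysis applied to $\hat Y_A$: the cone spanned by $\{i_X,i_H\}$ itself is explicitly subdivided along \cref{eq:alpha_cone}, so the sign of $m u_{J,g} - n u_{J,f}$ is now constant on every cone of $\hat\triangle_A$, eliminating the last remaining indeterminacy and producing a regular map $\hat\pi_A^*\phi_\alpha:\hat Y_A \to \CP^1$.

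The main obstacle is to translate the toroidal combinatorics faithfully into analytic statements: verifying that $\phi_\alpha$ is locally monomial in coordinates compatible with the divisor $D \cup \tX \cup \tilde Y^{(n)}$, and that its exponents along the toroidal strata are given by the linear forms $u_{J,f}$ and $u_{J,g}$. This is a careful but essentially standard application of the toroidal formalism of \cite{KKMSD}.
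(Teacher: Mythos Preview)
Your proposal is correct and follows essentially the same approach as the paper. The paper's own proof is a two-sentence summary: it notes that \cref{it:Asep_gen} and \cref{it:Asep_chi} were already established in the course of \cref{block:Asep}, and that \cref{it:Asep_Asep} and \cref{it:Asep_indet} follow from the refinement condition \cref{eq:alpha_cone}, which separates the zero and pole divisors of $\phi_\alpha$ except possibly along $\tilde X_A \cap \tilde H_A$. Your version simply unpacks this last point in toroidal coordinates via the sign of the linear form $m\,u_{J,g} - n\,u_{J,f}$, which is exactly the content the paper leaves implicit.
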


\begin{proof}
\cref{it:Asep_Asep} and \cref{it:Asep_indet} follow from the requirement
\cref{eq:alpha_cone}, which guarantees that irreducible component
of the zero set and the poles of $\phi_\alpha$ do not intersect, with
the possible exception of $\tilde X_A$ and $\tilde H_A$ in $Y_A$.
We have already seen \cref{it:Asep_gen} and \cref{it:Asep_chi}.
\end{proof}

\section{The Jacobian and alternating Jacobian polygons} \label{s:jacob}

\begin{block}
In this section we introduce and discuss an invariant of an isolated
hypersurface singularity,
defined by Teissier \cite{Tei,teissier_jacobian}.
We give a formula for the Jacobian polygon
in terms of an embedded resolution,
which induces a natural splitting into terms which can be seen
as the integral of the Hironaka function over the Milnor fiber at
radius zero with respect to the Euler characteristic.
These terms are the \emph{alternating Jacobian polygons}.

In \cite{Mich_Jac}, Michel proves a formula similar to
\cref{eq:ttps} in the context of a finite morphism
$(f,g):(X,p) \to (\C^2,0)$,
where $X$ is a surface with an isolated singularity at $p$.
This statement reduces to our formula with $n=1$
if $(X,p) = (\C^2,0)$ is smooth, and
$g$ is a generic linear function.

We will assume that $\pi:Y \to \C^{n+1}$ is an embedded resolution of
$f$, that $x_0,x_1,\ldots,x_n$ are generic coordinates with respect
to $f$ and $\pi$.
\end{block}

\begin{definition} \label{def:m_n_alpha}
With the polar curve $P = P_1 \cup \cdots \cup P_l$ with respect
to $f$ and $g = x_n$, set
\[
  m_q = \left(X,P_q\right)_{\C^{n+1},0},\qquad
  n_q = \left(H,P_q\right)_{\C^{n+1},0},\qquad
  \alpha_q = \frac{m_q}{n_q}.
\]
For any $d = 0,1,\ldots,n$ and $q = 1,2,\ldots,l^{(d+1)}$, define similarly
\[
  m^{(d+1)}_q = \left(X^{(d+1)},P^{(d+1)}_q\right)_{H^{(d+1)},0},\qquad
  n^{(d+1)}_q = \left(H^{(d)},P^{(d+1)}_q\right)_{H^{(d+1)},0},\qquad
  \alpha^{(d+1)}_q = \frac{m^{(d+1)}_q}{n^{(d+1)}_q}.
\]
\end{definition}

\begin{definition} \label{def:TN}
Let $x_0,x_1,\ldots,x_n$ be generic coordinates in $\C^{n+1}$ with respect
to $f$.
\begin{blist}
\item \label{it:TN_J}
The Jacobian polygon associated with $(X,0)$ is
\begin{equation} \label{eq:TPS}
  \TPS(f,0)
  =
  \sum_{q=1}^l
  \newpol{m_q}{n_q}
  \in \new.
\end{equation}
\item \label{it:TN_Jd}
For $0 \leq d \leq n$, we define
\[
  \TPS^{(d+1)}(f,0) = \TPS\left(f|_{H^{(d+1)}},0\right)
  =
  \sum_{q=1}^{l^{(d+1)}}
  \newpol{m^{(d+1)}_q}{n^{(d+1)}_q}
  \in \new.
\]
\item \label{it:TN_AJ}
We define the \emph{alternating Jacobian polygon of $f$} as
\begin{equation} \label{eq:tTN}
  \tTPS(f,0)
  =
  \sum_{d=0}^n (-1)^{n-d} \TPS^{(d+1)}(f,0)
  \in \Knew.
\end{equation}

\item \label{it:TN_AJd}
For $d=0,1,\ldots,n$, set
\[
  \tTPS^{(d+1)}(f,0)
  =
  \tTPS(f|_{H^{(d+1)}},0) \in K\new
\]
and $\tTPS(f,0)^{(0)} = 0$.
\end{blist}
\end{definition}

\begin{rem} \label{rem:tps_props}
\begin{blist}

\item
The set of coordinates generic with respect to $f$ forms an open
subset in the set of linear maps $\C^{n+1} \to \C^{n+1}$,
over which we have a deformation for each $d=0,1,\ldots,n$ with
fibers $X^{(d+1)}$, and these deformations are
(c)-equisingular (see \cite{Tei} for the definition of (c)-equisingularity).
As a result, the Jacobian polygons
are independent of the choice of generic coordinates.

%
\item \label{it:n0}
In the case $n=0$, there exists a unit $u \in \C\{x_0\}$ such that
$f(x_0) = x_0^e\cdot u(x_0)$, where $e$ is the multiplicity of $X$ at $0$.
In this case, $l = 1$, $(X,P_1)_0 = e$, $(H,P_1)_0 = 1$ and so
\[
  \tTPS(f,0)
  = \TPS(f,0)
  = \newpol{e}{1}.
\]
As a result, if $e$ is the multiplicity of the hypersurface $X$ at $0$, then 
\[
  \TPS^{(1)}(f,0) = \newpol{e}{1}.
\]

\item \label{it:two_summands}
It follows immediately from \cref{eq:tTN} that
\begin{equation} \label{eq:J_AJ_AJ}
  \TPS^{(d+1)}(f,0)
  =
  \tTPS^{(d+1)}(f,0) + \tTPS^{(d)}(f,0).
\end{equation}

\item \label{it:tps_length}
As a result of \cite[1.4 Remarque]{Tei}, we have
\[
  \ell\left(\TPS^{(d+1)}(f,0)\right) = \mu^{(d+1)} + \mu^{(d)},\qquad
  h\left(\TPS^{(d+1)}(f,0)\right) = \mu^{(d)}
\]
for $d=0,1,\ldots,n$. Note that here, by convention, we set $\mu^{(0)} = 1$.

\item \label{it:tps_telescope}
By the previous remark, we get a telescopic series
\[
  \ell(\tTPS(f,0))
  = \sum_{d=0}^n (-1)^{n-d} \left(\mu^{(d+1)} + \mu^{(d)}\right)
  = \mu^{(n+1)} + (-1)^n.
\]

\item \label{it:AJ0}
As a consequence of the previous point,
we have $\ell(\tTPS(f,0)) \neq 0$ unless $n$ is odd and $\mu = 1$, which
happens precisely when $(X,0)$ is a Morse point, i.e. when the Hessian
matrix of $f$ at $0$ is invertible. In this case one readily verifies
that $(X^{(d+1)},0)$ is a Morse point for all $d$, and that
\[
  \TPS^{(d+1)}(f,0) = \newpol{2}{1}, \qquad d=0,1,\ldots,n,
\]
and we find $\tTPS(f,0) = 0$ in this case. We have proved that the
following are equivalent
\begin{itemize}
\item
$\tTPS(f,0) = 0$,
\item
$\ell\left(\tTPS(f,0)\right) = 0$,
\item
$n$ is odd and $f$ is Morse.
\end{itemize}

\item \label{it:tps_loj}
By \cite[1.7 Corollaire 2]{Tei}, the Jacobian polygon
contains the \L ojasiewicz exponent:
\begin{equation} \label{eq:loj_deg}
  \Loj(f,0)
  = \deg \TPS(f,0) - 1.
\end{equation}

\item \label{it:alpha_mult}
If $\alpha \in \supp(\TPS(f,0))$, then $\alpha \geq \mult(X,0)$. Indeed,
we have
\begin{equation} \label{eq:alpha_frac}
  \alpha = \frac{(X,P_q)_0}{(H,P_q)_0}
\end{equation}
for some $q=1,2,\ldots,l$, and
by \cite[1.2 Th\'eor\`eme 1]{Tei}, we have $(H,P_q)_0 = \mult(P_q,0)$
for each $q=1,\ldots,l$. Therefore,
\[
  \alpha
  = \frac{(X,P_q)_0}{(H,P_q)_0}
  \geq \frac{\mult(X,0) \cdot \mult(P_q,0)}{\mult(P_q,0)}
  = \mult(X,0).
\]

\end{blist}
\end{rem}

\begin{thm} \label{thm:tps}
Let $\pi:Y\to \C^{n+1}$ be an embedded resolution of $f$ which factors
through the first blow-up. Then
\begin{equation} \label{eq:tps}
  \TPS(f,0)
  =
  (-1)^n \sum_{i \in I}
                \chi(D_i^\circ \setminus (\tX\cup\tH))
                \newpol{m_i}{n_i}
\end{equation}
and
\begin{equation} \label{eq:ttps}
  \tTPS(f,0)
  =
  (-1)^n \sum_{i \in I}
                \chi(D_i^\circ \setminus \tX)
                \newpol{m_i}{n_i}.
\end{equation}
\end{thm}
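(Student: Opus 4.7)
The plan is to prove \cref{eq:tps} and \cref{eq:ttps} together by induction on $n$, using the recursion $\TPS(f,0) = \tTPS(f,0) + \tTPS^{(n)}(f,0)$ from \cref{rem:tps_props}\cref{it:two_summands} to pass from either formula to the other.

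First I would reduce to an $A$-separating embedded resolution: setting $A = \bigcup_{d=0}^n \{\alpha_q^{(d+1)}\}_q$ and applying \cref{lem:Asep}, there is an $A$-separating refinement $\pi_A$ of $\pi$. By \cref{lem:Asep}\cref{it:Asep_chi,it:Asep_gen} the right-hand sides of both formulas are unchanged when passing from $\pi$ to $\pi_A$, since newly introduced exceptional components correspond to cones $\sigma_J$ of the original fan with $|J| \geq 2$ and hence contribute $\chi = 0$, while the coordinates remain generic. Under $A$-separation, each meromorphic germ $\phi_\alpha = g^m/f^n$ (with $\alpha = m/n \in A$ in lowest terms) extends to a genuine morphism $\phi_\alpha : Y \to \CP^1$, and on an exceptional component $D_j$ it is identically $0$ or $\infty$ unless $\alpha_j = \alpha$.

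The key geometric step would be to show that for each polar branch $P_q$, the strict transform $\tilde P_q$ meets the exceptional divisor transversally at a single point $p_q \in D_{i(q)}^\circ \setminus (\tX \cup \tH)$ for some unique $i(q)$ satisfying $\alpha_{i(q)} = \alpha_q$. Indeed $p_q \notin \tX$ because $P_q \not\subset X$, $p_q \notin \tH$ by genericity of $g$, and the Puiseux parametrization of $P_q$ shows that $\phi_{\alpha_q}|_{\tilde P_q}$ extends to a nonzero finite value at $p_q$, which forces $p_q$ to lie on a component with matching $\alpha_i$. The direct proof of \cref{eq:tps} would then proceed by grouping polar branches by $i(q)$ and identifying $\sum_{q:\,i(q)=i} \newpol{m_q}{n_q}$ with $(-1)^n \chi(D_i^\circ \setminus (\tX\cup\tH)) \newpol{m_i}{n_i}$; I expect this identification to come from a Hurwitz-type enumeration realizing the polar intersections as critical values of $\phi_{\alpha_i}$ on $D_i^\circ \setminus (\tX\cup\tH)$.

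Given \cref{eq:tps}, the formula \cref{eq:ttps} follows inductively via the recursion: apply the inductive hypothesis to $\tTPS^{(n)}(f,0)=\tTPS(f^{(n)},0)$ on the embedded resolution $\pi^{(n)}: Y^{(n)} \to H$, whose exceptional components $D_j^{(n)}$ correspond under $\omega^{(n+1)}: I^{(n)} \to I$ to open substrata of $\tH \cap D_{\omega(j)}^\circ$. The $A$-separating property guarantees that mixed strata $D_J^{(n)\circ}$ with $|J| \geq 2$ have zero Euler characteristic, so additivity yields $\chi(D_i^\circ \setminus \tX) - \chi(D_i^\circ \setminus (\tX \cup \tH)) = \sum_{j \in (\omega^{(n+1)})^{-1}(i)} \chi(D_j^{(n)\circ} \setminus \tX^{(n)})$, which combines with \cref{eq:tps} for $f$ and the inductive form of \cref{eq:ttps} for $f^{(n)}$ to give the desired statement. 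The main obstacle is the weighted enumeration of polar branches on each $D_i^\circ$: since $D_i$ is $n$-dimensional and $\phi_{\alpha_i}|_{D_i}$ is typically not a finite map, a standard Hurwitz argument does not directly apply, and a stratified analysis of the critical points of $\phi_{\alpha_i}$ near $\tX$ and $\tH$ appears to be required.
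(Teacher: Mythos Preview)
Your reduction to an $A$-separating resolution and the observation that
$\phi_{\alpha_q}(\tilde P_q)$ has a finite nonzero limit are both in the
paper, but from that point on the paper takes a quite different route,
and your acknowledged obstacle is exactly the step the paper avoids.

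The paper does \emph{not} track individual polar branches to specific
components $D_i$, and in particular never asserts that $\tilde P_q$ meets
the exceptional divisor transversally in a single $D_{i(q)}^\circ$.
(An embedded resolution of $f$ has no reason to desingularize the polar
curve, so $p_q$ may lie in a deeper stratum $D_J^\circ$ with $|J|\geq 2$,
and your proposed per-component identity
$\sum_{i(q)=i} m_q = (-1)^n m_i\,\chi(D_i^\circ\setminus(\tX\cup\tH))$
is a finer statement than what is needed or proved.)
Instead, the paper fixes a slope $\alpha$ and computes the Euler
characteristic of a \emph{band} of the Milnor fiber,
\[
  F_{\alpha,\epsilon,z}
  = f^{-1}(z)\cap B_\epsilon \cap \phi_\alpha^{-1}\bigl(\{a\le|y|\le b\}\bigr),
\]
in two ways. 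On one hand, $\phi_\alpha|_{F_{\alpha,\epsilon,z}}$ is a
rescaling of $g^{m_\alpha}$, hence a submersion to the annulus except for
$A_1$ points at the polar intersections with $\alpha_q=\alpha$; this gives
$\chi(F_{\alpha,\epsilon,z}) = (-1)^n\sum_{\alpha_q=\alpha}(X,P_q)_0$.
On the other hand, an A'Campo-style retraction of $F_{\alpha,\epsilon,z}$
onto the exceptional divisor (using a gradient-like flow for $|\pi^*f|^2$)
shows that $\chi(F_{\alpha,\epsilon,z})
= \sum_{i\in I_\alpha} m_i\,\chi\bigl(D_i^\circ\cap\phi_\alpha^{-1}([a,b])\bigr)$,
and finally $D_i^\circ\cap\phi_\alpha^{-1}([a,b])$ is shown to be homotopy
equivalent to $D_i^\circ\setminus(\tX\cup\tH)$.
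This Milnor-fiber computation completely bypasses any Hurwitz-type count
on the $n$-dimensional variety $D_i$.

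A smaller point: the paper derives \cref{eq:ttps} from \cref{eq:tps} by
applying \cref{eq:tps} at every level $d$ and using straight additivity
of $\chi$ in the alternating sum defining $\tTPS$, rather than by
induction on $n$ through the map $\omega^{(n+1)}$. Your inductive
reduction would also work, but is more elaborate than necessary.
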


\begin{proof}
Using \cref{eq:TPS,eq:tTN} and additivity of the Euler characteristic,
we see that \cref{eq:ttps} follows from \cref{eq:tps}, so we focus on
the former.
Fix some rational number $\alpha = m_\alpha/n_\alpha$, with
$m_\alpha,n_\alpha > 0$ and set
\[
  I_\alpha = \set{i\in I}{\frac{m_i}{n_i} = \alpha}.
\]
We have to show that
\begin{equation} \label{eq:t_alpha}
  \sum_{\alpha_q = \alpha}
  \left(X,P_q \right)_{\C^{n+1},0}
  = 
  (-1)^n
  \sum_{i \in I_\alpha} m_i \chi(D_i^\circ \setminus (\tX\cup\tH)).
\end{equation}
We have the function
\[
  \phi_\alpha = \frac{g^{m_\alpha}}{f^{n_\alpha}}
\]
For any $q = 1,\ldots,l$,
the order of the pullback of $\phi_\alpha$ to the normalization of $P_q$
is a positive multiple of $n_q m_\alpha - m_q n_\alpha$, and so,
\[
  \lim_{x\to 0} \phi_\alpha |_{P_q}
\begin{cases}
  = \infty & \alpha_q > \alpha, \\
  \in \C^* & \alpha_q = \alpha, \\
  = 0      & \alpha_q < \alpha. \\
\end{cases}
\]
As a result, if
\[
  M = M_{\epsilon, \eta}
  = B_\epsilon^{2n+2} \cap f^{-1}(D_\eta)
\]
is a small Milnor tube, i.e. $0\mleq\eta<\epsilon\mleq 1$,
then we can find positive real constants constants $a,b$
satisfying the following conditions:
\begin{enumerate}
\item
For any branch $P_q$ of the polar curve, we have
\[
  a < \left|\phi_{\alpha_q}|_{P_q \cap M}\right| < b.
\]
\item
For any two branches $P_q, P_r$ of the polar curves such that
$\alpha_r < \alpha_q$, we have
\[
  \left|\phi_{\alpha_q}|_{P_r \cap M}\right|
  <
  a \qquad
  \mathrm{and} \qquad
  b
  <
  \left|\phi_{\alpha_r}|_{P_q \cap M}\right|.
\]
\item
If $\emptyset \neq J \subset I_\alpha$, for some $\alpha$,
and $c$ is some nonregular value of the restriction
$\pi^*\phi_\alpha|_{D_J^\circ}$, then $a < c < b$.
\end{enumerate}
Set
\[
  U = \set{y\in\C}{a \leq |y| \leq b}.
\]
With small $0 < \eta \mleq \epsilon$, denote in this proof
\[
  F = F_{\alpha, \epsilon, z}
  =
  f^{-1}(z) \cap B_\epsilon^{2n+2} \cap \phi_\alpha^{-1}(U).
\]
Then, for $0 < |z| < \eta$, we have
\begin{equation} \label{eq:XPq_chiF}
  \sum_{\alpha_q = \alpha} (X,P_q)_{\C^{n+1},0}
  =
  (-1)^n \chi(F_{\alpha, \epsilon, z})
\end{equation}
Indeed,
the restriction $\phi_\alpha|_F$ of $\phi_\alpha$ to
$F = F_{\alpha,\epsilon,z}$ coincides with that of
$g^{m_\alpha} / |z|^{n_\alpha}$.
Therefore, $\phi_\alpha|_F$ is a submersion everywhere, except for
at intersection points $F \cap P$. It follows from construction that
the branch $P_q$ intersects $F$ in exactly $(X,P_q)_0$ points
if $\alpha_q = \alpha$, and in no points otherwise.
As a result, the restriction of $\phi_\alpha$ to $F \to A$ is a
submersion, except for at $\sum_{\alpha_q = \alpha} (X,P_q)_0$ points,
where it has an $A_1$ singularitiy. If $G$ is the fiber over a general point
of this map, then we find
\[
  \chi(F) = \chi(G)\chi(A) + (-1)^n \sum_{\alpha_q = \alpha} (X,P_q)
\]
proving \cref{eq:XPq_chiF}, since $\chi(A) = 0$.

Now, with $\alpha = m/n$ fixed, set $A = \{\alpha\}$.
Let $\pi_A$ and $\hat\pi_A$ be as in \cref{block:Asep}.
By \cref{lem:Asep}, it suffices to prove the lemma for
$\pi_A$, so in order to save effort on notation, let us assume that
$\pi = \pi_A$.
We identify $F_{\alpha,\epsilon,z}$ with its preimage in
$\hat Y = \hat Y_A$ by $\hat \pi$.
Choose any metric on $\hat Y$, and consider a gradient-like vector
field for the function $|\hat \pi^* f|^2$
which is tangent to the preimages
$\phi_\alpha^{-1}(a)$ and $\phi_\alpha^{-1}(b)$.
Note that by construction, $a$ and $b$ are regular values of $\phi_\alpha$.
Integrating this vector field induces a map
\[
  F_{\alpha,\epsilon,\eta} \to \hat D \cap \phi^{-1}([a,b]),
\]
whose fiber over a point in $\hat D^\circ_J \cap \phi_\alpha^{-1}([a,b]))$
consists of
$m_J = \gcd\set{m_j}{j\in J}$ disjoint copies of $(S^1)^{|J|-1}$.
As a result,
\begin{equation} \label{eq:chiF_mchiD}
  \chi(F_{\alpha,\epsilon,\eta})
  =
  \sum_{\emptyset \neq J \subset I_\alpha}
          \chi(\hat D^\circ_J \cap \phi^{-1}_\alpha([a,b]))
    \cdot m_J
    \cdot \chi((S^1)^{|J|-1})
  =
  \sum_{ i \in I_\alpha }
   m_i \chi(\hat D_i^\circ \cap \phi^{-1}_\alpha([a,b]))
\end{equation}
Now, by using a gradient-like vector field for $\pi^*\phi_\alpha|_{D_i^\circ}$
for $i\in J_\alpha$, satisfying $m_i/n_i = \alpha$,
and $\hat D_i \cong D_i$, we find that the natural homotopy equivalence
\begin{equation} \label{eq:last}
  \hat D_i^\circ \cap \pi_\alpha^*\phi^{-1}_\alpha([a,b])
  \hookrightarrow
  D_i^\circ \setminus (\tilde X \cup \tilde H)
\end{equation}
In particular, the two spaces have the same Euler characteristic.
As a result, \cref{eq:t_alpha} follows from
\cref{eq:XPq_chiF} and \cref{eq:chiF_mchiD}.
\end{proof}

\section{The vanishing rate filtration} \label{s:vanishing}

\begin{block}
In this section we describe a Morse function and a gradient-like vector
field for it on the Milnor fiber. The critical points of this Morse
function are intersection
points with polar curves in subspaces of varying dimension.
As a result, we have a corrsponding Morse-Smale complex
$(C_\cdot,\partial_\cdot)$ which computes the homology of 
the Milnor fiber, as in \cite[Theorem 7.4]{hMilnor}.
This construction can be made in such
a way that all but $\mu = \mu^{(n+1)}$ handles of index $n$
combine to form a ball $B^{(2n)}$, as in \cite{le_perron}.
Now, to each polar curve, we associate
it's \emph{vanishing rate}, following Teissier \cite[3.6.4]{intro_equising}.
This induces a grading on the Abelian group
$C_\cdot$, which induces
an increasing filtration of the complex $(C_\cdot, \partial_\cdot)$, which
we call the \emph{vanishing rate filtration}.
\end{block}
\begin{block}
In order to describe the Milnor fiber as a handle body, we construct
a Morse function $\psi^{(d+1)}_{\epsilon,\eta}$
on the Milnor fiber $F^{(d+1)}_{\epsilon,\eta}$, as well as
a gradient like vector field $\xi^{(d+1)}_{\epsilon,\eta}$ for it.
As a result, we obtain the Morse-Smale complex generated by the
critical values
of $\psi_{\epsilon,\eta} = \psi^{(n+1)}_{\epsilon,\eta}$, whose
differential counts trajectories of
$\xi_{\epsilon,\eta} = \xi^{(d+1)}_{\epsilon,\eta}$.
We will skip the indices $\epsilon,\eta$ when they are clear from context.

First, we construct a vector field $\zeta$ on $Y^{(d+1)}$
For any point $x \in Y^{(d)} \cap D^{(d+1)}$, let $J \subset I^{(d+1)}$
be the set such that $x \in D^\circ_J$.
If $x \notin \tX$, then there exist holomorphic coordinates
$u_0,u_1,\ldots,u_d$ centered at $x$ in a chart $U_x \ni x$ 
with the property that for each $i \in J$ there is a
$k_i \in \{1,2,\ldots,|J|\}$ such that $D_i \cap U_x = \{u_{k_i} = 0\}$
and
\[
  \pi^* f|_{U_x} = \prod_{i\in J} u_{k_i}^{m_i},\qquad
  \pi^* g^{(d+1)}|_{U_x} = u_0 \prod_{i\in J} u_{k_i}^{n_i}.
\]
If $x \in \tX$, then we can find a similar coordinate chart
such that
\[
  \pi^* f|_{U_x} = u_1 \prod_{i\in J} u_{k_i}^{m_i},\qquad
  \pi^* g^{(d+1)}|_{U_x} = u_0 \prod_{i\in J} u_{k_i}^{n_i}.
\]
In either case, define a vector field in $U_x$
\[
  \zeta_x = u_0 \frac{\partial}{\partial u_0}
\]
By compactness, we can cover $Y^{(d)} \cap D^{(d+1)} = D^{(d)}$
with finitely many such
$U_x$, say $H \cap D^{(d)} \subset \cup_{x \in T} U_x$, with $|T| < \infty$.
We will assume that $\epsilon$ is chosen small enough that
$\pi^{-1}(B_\epsilon \cap H^{(d)}) \subset \cup_{x \in T} U_x$.
In fact, if we fix some Riemannian metric on $Y^{(d+1)}$, inducing a
metrix $d$, then,
if $\kappa > 0$ is small enough, then the set
\[
  N_\kappa
  = \set{x \in Y^{(d+1)} \cap \pi^{-1}(B_\epsilon)}
                 { d(x,Y^{(d)}) < \kappa }
\]
is a tubular neighborhood of $Y^{(d)} \cap \pi^{-1}(B_\epsilon)$, and we
have $\overline{N}_\kappa \subset \cup_{x\in T} U_x$ for $\kappa$ small enough.
Now, set
\[
  U_0
  = \pi^{-1}(B_\epsilon)
    \cap Y^{(d+1)}
    \setminus \overline{N}_\kappa.
\]
We have the vector field
\[
  \zeta_0^{(d+1)}(x)
  = \nabla \left|\pi^*\left( g^{(d+1)}|_{F_{\epsilon,z}} \right) \right|^2 (x).
\]
for $x \in U_0$, where $z = \pi^*f(x)$.
We define the vector field $\zeta$ on $Y^{(d+1)} \cap \pi^{-1}(B_\epsilon)$
by gluing together $\zeta_0$ and the $\zeta_x$ for $x\in T$ via
a partition of unity.
Observe that
\begin{itemize}

\item
The vector field $\zeta$ is tangent to the Milnor fibers
$F^{(d+1)}_{\epsilon,z}$
(which we identify with their preimages in $Y^{(d+1)}$).

\item
For any $z \in D^*_\eta$,
the vector field $\zeta_{F^{(d+1)}_{\epsilon,z}}$
has nondegenerate singular points at intersection points
$P^{(d+1)} \cap F^{(d+1)}_{\epsilon,z}$ of index $d$.

\item
$\zeta$ is defined in a neighborhood around $Y^{(d)}$, vanishes along
$Y^{(d)}$ in a nondegenerate way. In fact, the Hessian of
$\zeta$ restricted to the normal bundle of $Y^{(d)} \subset Y^{(d+1)}$
is positive definite.

\end{itemize}

As a result of the last item, we can, and will, assume that $\kappa$
is chosen small enough so that there exists a trivialization
\[
  \iota^{(d+1)}: Y^{(d)} \times D_\kappa \cong N_\kappa^{(d+1)}
\]
of $N_\kappa$, with the property that for any $x \in Y^{(d)}$,
the set $\iota^{(d+1)}(\{x\}\times D_\kappa)$ is the unstable
manifold of $\zeta$ at $x$.

Next, for a fixed $z$, we construct a Morse function
\[
  \psi_{\epsilon,z}^{(d+1)} : F^{(d+1)}_{\epsilon,z} \to \R
\]
and a gradient-like vector field $\xi_{\epsilon,z}^{(d+1)}$ recursively
for $d = 0,1,\ldots,n$. Starting with $d = 0$, we set
\[
  \psi^{(1)} = 0,\qquad
  \xi^{(1)} = 0.
\]
Assuming that we have defined $\psi_{\epsilon,\eta}^{(d)}$ and
$\xi_{\epsilon,\eta}^{(d)}$ for
some $d > 0$, we then construct $\psi^{(d+1)}$ and $\xi^{(d+1)}$.
Take a $\Cinf$ function $B:\R\to\R$ satisfying $B|_{(-\infty,1/3]} = 1$
and $B|_{[2/3,\infty)} = 0$, and set
\[
  \tilde \psi^{(d)}(\tilde x)
  =
\begin{cases}
  B\left(|g^{(d+1)}(x)|^2/\rho\right)
    \psi^{(d)}(x') &
    \mathrm{if}\; x = \iota(x',t), \\
  0                      & \mathrm{else.} \\
\end{cases}
\]
where $\rho > 0$ is so small that if $x \in F^{(d+1)}_{\epsilon,z}$
and $|g^{(d+1)}(x)| < \rho$ then $x \in N^{(d+1)}_\kappa$.
Then, the function
\[
  \psi^{(d+1)}_{\epsilon,\eta}
  = |g^{(n+1)}|^2 + \nu \tilde \psi^{(d)}
\]
is Morse on $F^{(d+1)}_{\epsilon,\eta}$ for $\nu$ small enough.
In fact, since $g^{(d+1)}|_{F^{(d+1)}_{\epsilon,\eta}}$ has $A_1$
singularities at intersection points with $P^{(d+1)}$,
$\psi^{(d+1)}$ has nondegenerate singularities there of index $d$.
Furthermore, the Hessian of $|g^{(d+1)}|^2$ restricted to 
the normal bundle of $F^{(d)}$ in $F^{(d+1)}$ is positive definite.
Therefore, we see by induction that the index of $\psi^{(n+1)}$ at an
intersection point in $F^{(n+1)} \cap P^{(d+1)}$ has index $d$.

By extending $\xi$ similarly as above, using the product structure
on the image of $\iota^{(d+1)}$ to a vector field $\tilde\xi^{(d)}$
on $F^{(d+1)}$ with support near $F^{(d)}$ we set
\[
  \xi^{(d+1)} = \nabla |g^{(d+1)}|^2 + \nu \tilde \xi^{(d)}.
\]
Then for $\nu$ small enough, $\xi^{(d+1)}$ is gradient like for
$\psi^{(d+1)}$.
\end{block}

\begin{lemma} \label{lem:trajs}
Fix polar curve branches $P^{(d+1)}_q$ and $P_r^{(d+1)}$ for some $d$.
Then, if, for $\eta$ arbitrarily small, there exists a $z \in D^*_\eta$
and a trajectory
$\gamma:\R \to F^{(d+1)}_{\epsilon,z}$ of $\xi^{(d+1)}$ satisfying
\[
  \lim_{t\to-\infty} \gamma(t) \in P^{(d)}_r,\qquad
  \lim_{t\to+\infty} \gamma(t) \in P^{(d+1)}_q,
\]
then
\begin{equation} \label{eq:traj_ineq}
  \alpha^{(d)}_r
  \leq
  \alpha^{(d+1)}_s.
\end{equation}
\end{lemma}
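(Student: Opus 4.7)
The plan is to exploit that $\xi^{(d+1)}$ is gradient-like for $\psi^{(d+1)}$: along any non-constant trajectory, $\psi^{(d+1)}$ is strictly increasing, so the existence of $\gamma$ with $\gamma(-\infty) = p_-$ and $\gamma(+\infty) = p_+$ forces
\[
  \psi^{(d+1)}(p_-) < \psi^{(d+1)}(p_+).
\]
I would then turn this into an asymptotic comparison as $\eta\to 0$; the rates at which both sides tend to zero will expose the desired inequality on vanishing rates.

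For $\psi^{(d+1)}(p_+) = |g^{(d+1)}(p_+)|^2 + \nu\tilde\psi^{(d)}(p_+)$, the leading term is computed directly from the polar branch: on the normalization of $P^{(d+1)}_q$, $f$ and $g^{(d+1)}$ vanish to orders $m^{(d+1)}_q$ and $n^{(d+1)}_q$, so imposing $f = z$ with $|z| = \eta$ gives $|g^{(d+1)}(p_+)|^2 \asymp \eta^{2/\alpha^{(d+1)}_q}$. For the perturbation, write $q_+ := (\iota^{(d+1)})^{-1}(p_+)_1 \in F^{(d)}_{\epsilon,z}$ for the base point of the flow of $\zeta$ through $p_+$; then $\nu\tilde\psi^{(d)}(p_+) = \nu\, B(|g^{(d+1)}(p_+)|^2/\rho)\,\psi^{(d)}(q_+)$. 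In the local model $\zeta = u_0\partial/\partial u_0$ of Section \ref{s:vanishing}, the flow preserves the coordinates $u_i$ for $i\ge 1$ and hence, after pushforward by $\pi$, preserves the linear coordinates $x_0,\ldots,x_{d-1}$ to leading order. Genericity of the coordinates with respect to each polar branch---which forces every linear form $x_i$ to have order $n^{(d+1)}_q = \mathrm{mult}(P^{(d+1)}_q)$ on $P^{(d+1)}_q$---then yields $|x_{d-1}(q_+)| = |x_{d-1}(p_+)| \asymp \eta^{1/\alpha^{(d+1)}_q}$, and iterating through the recursive definition of $\psi^{(d)}$ gives $\psi^{(d)}(q_+) \asymp \eta^{2/\alpha^{(d+1)}_q}$. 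Thus $\psi^{(d+1)}(p_+) \asymp \eta^{2/\alpha^{(d+1)}_q}$.

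For $\psi^{(d+1)}(p_-)$, the point $p_-$ lies in $H^{(d)} = \{x_d = 0\}$, so $g^{(d+1)}(p_-) = 0$ and $B(0) = 1$; hence $\psi^{(d+1)}(p_-) = \nu\,\psi^{(d)}(p_-)$. Applying the analogous analysis one dimension lower to the critical point $p_- \in P^{(d)}_r \cap F^{(d)}_{\epsilon,z}$ of $\psi^{(d)}$---proceeding by induction on $d$, with the trivial base case $\psi^{(1)} \equiv 0$---one obtains $\psi^{(d)}(p_-) \asymp \eta^{2/\alpha^{(d)}_r}$. Combined with the preceding paragraph and the strict inequality, this gives
\[
  c\,\nu\,\eta^{2/\alpha^{(d)}_r} \le C\,\eta^{2/\alpha^{(d+1)}_q}
\]
for positive constants depending only on the construction. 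Since this must hold for $\eta$ arbitrarily small while $\nu$ is fixed, the exponents must satisfy $2/\alpha^{(d)}_r \ge 2/\alpha^{(d+1)}_q$, equivalently $\alpha^{(d)}_r \le \alpha^{(d+1)}_q$.

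The principal technical hurdle is controlling the perturbation $\nu\tilde\psi^{(d)}$ at $p_+$: a priori $\psi^{(d)}$ on $F^{(d)}$ is merely bounded, which would swamp the asymptotics. The remedy outlined above is to locate $q_+$ precisely using the local form of $\zeta$ in the toroidal chart together with the genericity of all linear coordinates with respect to each polar branch; these two ingredients together pin down the rate at which the base point tends to $0\in F^{(d)}$. All remaining steps---the monotonicity of $\psi^{(d+1)}$ along trajectories and the comparison of exponents as $\eta\to 0$---are then standard.
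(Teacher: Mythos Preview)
Your approach is genuinely different from the paper's: you work directly with the Morse values $\psi^{(d+1)}(p_\pm)$ and extract the vanishing rates from their asymptotic scaling in $\eta$, whereas the paper never estimates $\psi^{(d+1)}$. Instead, it passes to the embedded resolution, uses an $\{\alpha,\beta\}$-separating modification, and tracks the meromorphic functions $\phi^{(d+1)}_\alpha$ and $\phi^{(d)}_\beta$ along pieces of the trajectory; the conclusion is obtained by taking limits of the midpoints $p_k\in\partial N_\kappa$ to a point on the exceptional divisor and reading off the inequality $\beta\le m_i/n_i\le\alpha$ from the divisorial data. Your route would be more elementary if it worked, since it avoids the resolution and \cref{lem:Asep} entirely.

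However, there is a real gap at exactly the spot you flag as ``the principal technical hurdle''. Your fix relies on the assertion that in the toroidal chart the flow of $\zeta=u_0\,\partial/\partial u_0$ ``preserves the linear coordinates $x_0,\ldots,x_{d-1}$ to leading order''. This is not justified: the only normalisation built into the chart is $\pi^*g^{(d+1)}=u_0\prod u_{k_i}^{n_i}$, and for $j<d$ one has $\pi^*x_j=(\prod u_{k_i}^{n_i})\,h_j(u_0,\ldots,u_d)$ with $h_j$ an arbitrary holomorphic function. Flowing $u_0\mapsto e^tu_0$ changes $h_j$, and as $\eta\to 0$ the base point $q_+$ drifts towards $D^{(d)}$, where $h_j$ may vanish; so neither $|x_{d-1}(q_+)|=|x_{d-1}(p_+)|$ nor $|x_{d-1}(q_+)|\asymp\eta^{1/\alpha_q}$ follows. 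Consequently you have no bound of the form $\tilde\psi^{(d)}(p_+)=O(\eta^{2/\alpha_q})$, and the right-hand side of your final displayed inequality degenerates to $C\eta^{2/\alpha_q}+\nu\cdot O(1)$, from which nothing about $\alpha^{(d)}_r$ versus $\alpha^{(d+1)}_q$ can be concluded. (The lower bound $\psi^{(d+1)}(p_-)\ge c\,\nu\,\eta^{2/\alpha_r}$ is fine, since $\psi^{(d)}\ge 0$ by induction.)

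A possible repair in your spirit would be to show that, for generic coordinates, the strict transform $\tilde P^{(d+1)}_q$ meets $D^{(d+1)}$ away from $Y^{(d)}$; then for $\eta$ small one has $p_+\notin N_\kappa$, so $\tilde\psi^{(d)}(p_+)=0$ and your comparison goes through directly. But that transversality statement is essentially what the paper's resolution argument is packaging; without it, the monotonicity of $\psi^{(d+1)}$ alone does not control the perturbation.
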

\begin{proof}
We start by making some assumptions on the resolution $\pi$.
Similarly as in the proof of \cref{thm:tps}, let
$\triangle$ be the conical polyhedral complex associated with the
divisor
\[
  D \cup \tX \subset Y.
\]
Note that we do not include the divisor $Y^{(n)} \subset Y$. Set
\[
  \alpha = \alpha_q^{(d+1)},\qquad
  \beta = \alpha_r^{(d)}, \qquad
  A = \{\alpha, \beta\}.
\]
Assume, then, that $\pi = \pi_A$ is an embedded resolution of $f$
which satisfies the conclusion of \cref{lem:Asep}.
In particular,
the indeterminacy locus of the meromorphic function
\[
  \pi^* \phi_\alpha^{(d+1)}
  = \frac{\left(g^{(d+1)}\right)^m}
         {\left(f^{(d+1)}\right)^n},\qquad
  \alpha = \frac{m}{n}
\]
is contained in $Y^{(d)}$. Similarly,
the indeterminacy locus of the meromorphic function
\[
  \pi^* \phi_\alpha^{(d)}
  = \frac{\left(g^{(d)}\right)^m}
         {\left(f^{(d)}\right)^n},\qquad
  \alpha = \frac{m}{n}
\]
is contained in $Y^{(d-1)}$.

Let us assume that $z_k$ is a sequence of small numbers $z_k \in \C$
converging to zero
such that there exist trajectories $\gamma_k$ in $F^{(d+1)}_{z_k,\epsilon}$
as described in the statement of the lemma.
We then want to prove that $\beta \leq \alpha$.
For each $k$, the vector field $\xi^{(d+1)}_{\epsilon,z_k}$ points
outwards along the boundary of
$N^{(d+1)}_\kappa \cap F^{(d+1)}_{\epsilon,z_k}$, and so
the trajectory $\gamma_k$ intersects this boundary in exactly one point.
By reparametrization, we will assume that this point, say $p_k$,
is the value of $\gamma_k$ at $0$.

\begin{figure}[ht]
\begin{center}
\input{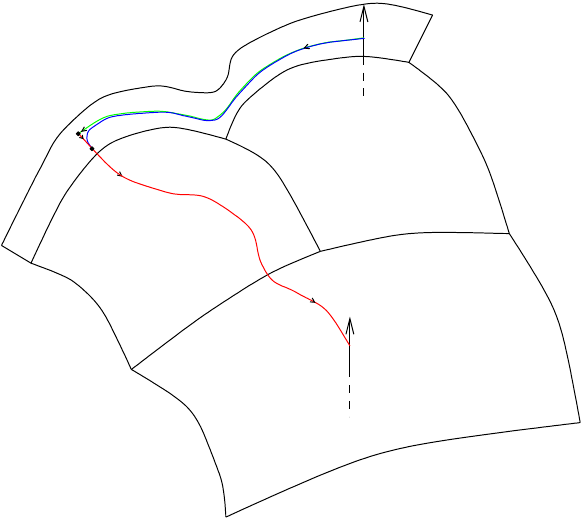_t}
\caption{The trajectories $\gamma_k, \gamma_k', \gamma_k''$ in $Y^{(d+1)}$.}
\label{fig:traj}
\end{center}
\end{figure}

The restriction of $\gamma_k$ to the positive real axis is then a trajectory
of $\zeta^{(d+1)}$ as well. Denote by $\gamma'_k$ the trajectory of
$\zeta^{(d+1)}$ which coincides with $\gamma_k$ along $\R_{\geq 0}$.
We then have
$\gamma_k(0) = \iota^{(d+1)}(q_k,\theta_k)$ for some $q_k \in Y^{(d+1)}$
and $\theta_k \in \partial D_\kappa$.
The continuation of $\gamma'_k$ to the negative axis then
follows the segment from $(q_k,\theta_k)$ towards $(q_k,0)$, and we have
\[
  \lim_{t\to+\infty} \gamma'_k(t) = q_k \in Y^{(d)}.
\]
Since $\phi_\alpha(\gamma'(t))$ is increasing, and by choosing
$a,b \in \R_{>0}$ as in the proof of \cref{thm:tps}, we find
\[
  |\pi^*\phi^{(d+1)}_\alpha(p_k)|
  =
  |\pi^*\phi^{(d+1)}_\alpha(\gamma_k(0))|
  <
  \lim_{t\to+\infty} |\pi^*\phi^{(d+1)}_\alpha(\gamma_k(t))|
  \leq
  b
\]
By compactnes, the sequence $p_k$ has an accumulation point
$p \in Y^{(d+1)}$. By taking a subsequence, we will assume that,
in fact, $p_k \to p$ when $k \to \infty$.
By continuity of $\pi^*\phi^{(d+1)}_\alpha$ away from $Y^{(d)}$, we then have
\[
  |\pi^*\phi_\alpha(p)| \leq b\qquad \mathrm{and}\qquad
  \pi^*f(p) = 0.
\]
In particlar, we have $p \in D^{(d+1)} \cup \tX^{(d+1)}$.
Furthermore, $p \notin \tX^{(d+1)}$ and $p \notin D_i$ for any
$i\in I^{(d+1)}$ satisfying $m_i / n_i > \alpha$.
As a result, as for any point on $D^{(d+1)}$,
there exists a unique $J \subset I^{(d+1)}$ such that $q \in D^\circ_J$,
but we also have $m_i / n_i \leq \alpha$ for any $i \in J$.

Now, since $p_k$ converges to $p$, there exists a $q \in Y^{(d)}$
such that $q_k \to q$. Since the vector field $\zeta$ is tangent
to any $D_i$ for $i \in I^{(d+1)}$, we find that
\[
  q \in D_J^\circ.
\]
Since $q \in Y^{(d)}$,
there exists a unique $K \subset I^{(d)}$ such that
$q \in D_K^\circ$, and the map $\omega^{(d+1)}:I^{(d)} \to I^{(d+1)}$
induces a bijection $K \to J$.

Next, we restrict our attention to the part of $\gamma_k$ contained in
$N_k^{(d+1)}$, which has a product structure via $\iota^{(d+1)}$. For
$t<0$, write
\[
  \gamma_k(t) = \iota^{(d+1)}( \gamma''_k(t), \vartheta(t) ).
\]
By construction, $\gamma''_k(t)$ is then a reparametrization of
a trajectory of $\xi^{(d)}_{\eta,z_k}$, and $\vartheta$ is a parametrization
of the open segment from $0 \in D_\eta$ to $\theta_k \in \partial D_\kappa$.
Similarly as above, we have for any $k$
\[
  a
  \leq
  \lim_{t\to -\infty}
    \left|\pi^*\phi^{(d)}_\beta\left( \gamma''(t)\right)\right|
  \leq 
  \left|\pi^*\phi^{(d)}_\beta \left(q_k\right)\right|
\]
and so $a \leq \phi_\beta^{(d)}(q)$. It follows that for any
$i \in K \subset I^{(d)}$ we have $\beta \leq m_i/n_i$.
Therefore,
\[
  \beta
  \leq
  \frac{m_i}{n_i}
  =
  \frac{m_{\omega^{(d+1)}(i)}}{n_{\omega^{(d+1)}(i)}}
  \leq
  \alpha.\qedhere
\]
\end{proof}

\begin{block} \label{block:morse_smale}
Denote by
\[
  \Sigma^{(d+1)} = P^{(d+1)} \cap F_{\epsilon, \delta}
\]
the set of critical points of $\psi^{(d+1)}$ having index $d$,
which is partitioned by setting
$\Sigma^{(d+1)}_q = \Sigma^{(d+1)}\cap P^{(d+1)}_q$ for any $q$.
Let $(C_\cdot,\partial_\cdot)$ be the Morse-Smale complex associated
with $\psi = \psi^{(n+1)}$ and $\xi = \xi^{(n+1)}$ so that
\[
  C_d = \Z\langle \Sigma^{(d+1)} \rangle,\qquad
  C_\cdot = \bigoplus_{d=0}^n C_d.
\]
Thus, if $x \in \Sigma^{(d+1)}$ and $y \in \Sigma^{(d)}$, then
the coefficient in front of $y$ in $\partial_d(x)$ is a signed
count of trajectories $\gamma$ of $\xi^{(n+1)}$ (or a small
perturbation of $\xi^{(d+1)}$) satisfying
\[
  \lim_{t\to-\infty} \gamma(t) = y,\qquad
  \lim_{t\to+\infty} \gamma(t) = x.
\]
If $p \in \Sigma_q^{(d+1)}$, the we
define the \emph{vanishing rate} of $p$ as
$\alpha(p) = \alpha_q^{(d+1)}$.
The \emph{vanishing rate filtration} on $C_\cdot$
is the filtration $F_\cdot$ given by
\[
\begin{split}
  F_\alpha C_d
  &= \Z\gen{p \in F^{(n+1)} \cap P^{(d+1)}}{\alpha(p) < \alpha}
  \subset C_d, \\
  F_\alpha C_\cdot
  &= \sum_{d=0}^n F_\alpha C_d
  \subset C
\end{split}
\]
for any $\alpha \in \Q$.
For $d = 0,1,\ldots,n$, we define $(C_\cdot^{(d+1)}, \partial^{(d+1)}_\cdot)$
similarly as the Morse-Smale complex associated with
$\psi^{(d+1)}$ and $\xi^{(d+1)}$. The Abelian groups $C_\cdot^{(d+1)}$
are filtered in the same way by the vanishing rates.
By the construction of the Morse-Smale complex,
\cref{lem:trajs} immediately implies
\end{block}

\begin{cor} \label{cor:filtration}
The filtration $F_\cdot$ is a filtration of $(C_\cdot,\partial_\cdot)$
as a complex, that is,
\[
\pushQED{\qed}
  \partial_d(F_\alpha C_d) \subset F_\alpha C_{d-1},\qquad \alpha \in \Q.
  \qedhere
\popQED
\]
\end{cor}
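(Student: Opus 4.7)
The plan is to unpack the definition of the Morse--Smale differential and derive the statement as an immediate consequence of \cref{lem:trajs}. Since $F_\alpha C_\cdot$ is generated by critical points of vanishing rate strictly less than $\alpha$, and $\partial_d$ is $\Z$-linear, it suffices to verify that $\partial_d(p) \in F_\alpha C_{d-1}$ whenever $p$ is a single generator of $F_\alpha C_d$.

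First I would fix such a generator $p \in \Sigma^{(d+1)}_q \subset F_{\epsilon,\delta}^{(n+1)} \cap P^{(d+1)}_q$, with $\alpha(p) = \alpha^{(d+1)}_q < \alpha$. By the definition of the Morse--Smale complex recalled in \cref{block:morse_smale},
\[
  \partial_d(p) = \sum_{y \in \Sigma^{(d)}} n(y,p) \cdot y,
\]
where $n(y,p) \in \Z$ is the signed count of trajectories of $\xi$ (or a small perturbation of it) satisfying $\lim_{t\to -\infty} \gamma(t) = y$ and $\lim_{t\to +\infty} \gamma(t) = p$. For any $y \in \Sigma^{(d)}_r$ contributing to this sum with $n(y,p) \neq 0$, at least one such trajectory exists for $\eta$ arbitrarily small, so \cref{lem:trajs} applies and yields
\[
  \alpha(y) = \alpha^{(d)}_r \leq \alpha^{(d+1)}_q = \alpha(p) < \alpha.
\]
Hence every $y$ appearing in $\partial_d(p)$ lies in $F_\alpha C_{d-1}$, and $\partial_d(F_\alpha C_d) \subset F_\alpha C_{d-1}$ follows.

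The one detail I would be careful about is that the standard Morse--Smale complex is defined using a small perturbation of $\xi$ to ensure transversality of stable and unstable manifolds, while \cref{lem:trajs} is stated for $\xi$ itself. Since the inequality \cref{eq:traj_ineq} is a closed condition in terms of finitely many rational numbers $\alpha^{(d+1)}_q$, and the limit--point arguments used in the proof of \cref{lem:trajs} only invoke compactness of $Y$ and continuity of $\pi^\ast \phi_\alpha^{(d+1)}$ away from $Y^{(d)}$, they persist verbatim for any sufficiently small perturbation of $\xi$. Thus no additional ingredient beyond \cref{lem:trajs} is required.
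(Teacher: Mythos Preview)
Your proposal is correct and matches the paper's approach exactly: the paper simply states that \cref{lem:trajs} immediately implies the corollary, and your argument spells out precisely that implication by checking it on generators. Your extra remark about perturbations is a reasonable caveat but, as you note, does not require anything beyond what is already in the proof of \cref{lem:trajs}.
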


\section{Nonnegativity for the alternating Jacobian polygon} \label{s:nonneg}

\begin{block}
In this section, we prove \cref{thm:mono}, as well as several
corollaries. This result shows that, although the alternating
Jacobian polygon does not always have nonnegative coefficients
like Teissier's Jacobian polygon, the length of
any of its truncations is nonnegative.

The proof of this result uses the filtration on the Morse-Smale complex
introduced in the previous section. This argument is sketched out
informally in \cref{block:key}.
\end{block}

\begin{thm} \label{thm:mono}
For any $\alpha \in \Q_{>0}$ and $d = 1,\ldots,n$, we have
\begin{equation} \label{eq:hlh}
  h\left( \TPS^{(d+1)}_{\geq \alpha}(f,0) \right)
  \geq
  \ell\left( \TPS^{(d)}_{\geq \alpha}(f,0) \right)
  -
  h\left( \TPS^{(d)}_{\geq \alpha}(f,0) \right).
\end{equation}
As a result,
\begin{equation} \label{eq:mono}
  \ell\left(\TPS^{(d+1)}_{\geq \alpha}(f,0)\right)
  \geq
  (\alpha-1) \ell\left(\TPS^{(d)}_{\geq \alpha}(f,0)\right).
\end{equation}
with a strict inequality if $\alpha < \deg \TPS^{(d+1)}(f,0)$.
\end{thm}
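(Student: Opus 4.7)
The plan is to first reduce the second inequality \cref{eq:mono} to the first \cref{eq:hlh}. Since every summand of $\TPS^{(d+1)}_{\geq \alpha}(f,0)$ has the form $\newpol{m_q}{n_q}$ with $m_q \geq \alpha n_q$, we have $\ell(\TPS^{(d+1)}_{\geq \alpha}) \geq \alpha\, h(\TPS^{(d+1)}_{\geq \alpha})$, with strict inequality precisely when some $\alpha_q > \alpha$, i.e., when $\alpha < \deg \TPS^{(d+1)}(f,0)$. Combining this with \cref{eq:hlh} to lower-bound $h(\TPS^{(d+1)}_{\geq \alpha})$, and then using the analogous $\alpha\, h(\TPS^{(d)}_{\geq \alpha}) \leq \ell(\TPS^{(d)}_{\geq \alpha})$ to absorb the factor of $\alpha$, yields \cref{eq:mono} with the stated strictness.

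For \cref{eq:hlh} itself, I use Teissier's identities $\ell(\TPS^{(e+1)}(f,0)) = \mu^{(e+1)} + \mu^{(e)}$ and $h(\TPS^{(e+1)}(f,0)) = \mu^{(e)}$ from \cref{rem:tps_props}\cref{it:tps_length}, together with additivity of $\ell, h$ across the splitting $A = A_{<\alpha} + A_{\geq \alpha}$, to rewrite \cref{eq:hlh} as the equivalent complementary inequality
\[
  \ell\bigl(\TPS^{(d)}_{<\alpha}(f,0)\bigr)
  \geq
  h\bigl(\TPS^{(d)}_{<\alpha}(f,0)\bigr)
  +
  h\bigl(\TPS^{(d+1)}_{<\alpha}(f,0)\bigr).
\]
Each term admits an interpretation in the Morse-Smale complex $(C_\cdot^{(d+1)}, \partial^{(d+1)})$ of \cref{block:morse_smale} with its vanishing rate filtration $F_\alpha$: by construction $\rk(F_\alpha C_e^{(d+1)}) = \ell(\TPS^{(e+1)}_{<\alpha}(f,0))$, and for $e < d$ the pair $(C_e^{(d+1)}, \partial^{(d+1)}_e)$ agrees with $(C_e^{(e+1)}, \partial^{(e+1)}_e)$. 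Since $H_\ast(C_\cdot^{(d+1)}) = H_\ast(F^{(d+1)})$ is concentrated in degrees $0$ and $d$ of ranks $1$ and $\mu^{(d+1)}$, the full boundary $\partial^{(e+1)}_e$ has rank exactly $\mu^{(e)} = h(\TPS^{(e+1)}(f,0))$.

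The argument is then completed by the following key claim: for each $e$,
\[
  \rk\bigl(\im(\partial^{(e+1)}_e|_{F_\alpha C^{(e+1)}_e})\bigr)
  \geq
  h\bigl(\TPS^{(e+1)}_{<\alpha}(f,0)\bigr).
\]
Granting this for $e = d$ and $e = d-1$, the containment $\im(\partial^{(d+1)}_d|_{F_\alpha}) \subset \ker(\partial^{(d)}_{d-1}|_{F_\alpha})$ (from $\partial^2 = 0$ and \cref{cor:filtration}) together with rank-nullity on $F_\alpha C^{(d)}_{d-1}$ delivers
\[
  h\bigl(\TPS^{(d+1)}_{<\alpha}\bigr)
  \leq
  \rk\bigl(\im \partial^{(d+1)}_d|_{F_\alpha}\bigr)
  \leq
  \ell\bigl(\TPS^{(d)}_{<\alpha}\bigr) - \rk\bigl(\im \partial^{(d)}_{d-1}|_{F_\alpha}\bigr)
  \leq
  \ell\bigl(\TPS^{(d)}_{<\alpha}\bigr) - h\bigl(\TPS^{(d)}_{<\alpha}\bigr),
\]
which is the required inequality. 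The hard part is the key claim, which asserts that the $\mu^{(e)}$ units of rank in $\im \partial^{(e+1)}_e$ split across the polar branches in a manner compatible with the vanishing rate filtration, so that each slow branch $P_q^{(e+1)}$ with $\alpha_q < \alpha$ contributes all of its $n_q$ units. Making this rigorous should rely on a refined trajectory analysis in the spirit of the proof of \cref{lem:trajs}, exploiting the tubular neighborhood structure $N_\kappa^{(e+1)}$ of \cref{block:morse_smale} to connect the index-$e$ critical points from $P_q^{(e+1)}$ to the index-$(e-1)$ critical points accounting for $n_q$ units of boundary rank.
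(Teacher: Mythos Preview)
Your reduction of \cref{eq:mono} to \cref{eq:hlh} is correct and matches the paper's argument. Your complementary reformulation of \cref{eq:hlh} as
\[
  \ell\bigl(\TPS^{(d)}_{<\alpha}\bigr)
  \;\geq\;
  h\bigl(\TPS^{(d)}_{<\alpha}\bigr)
  + h\bigl(\TPS^{(d+1)}_{<\alpha}\bigr)
\]
is also correct and is a perfectly reasonable restatement.

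The genuine gap is your ``key claim'' that
$\rk\bigl(\im(\partial^{(e+1)}_e|_{F_\alpha})\bigr) \geq h\bigl(\TPS^{(e+1)}_{<\alpha}\bigr)$.
You do not prove it; you only say it ``should rely on a refined trajectory analysis''. But nothing you have established so far controls how the $\mu^{(e)}$ units of rank in $\im\partial^{(e+1)}_e$ are distributed across the filtration. \Cref{lem:trajs} only tells you that trajectories do not decrease the vanishing rate; it does not, by itself, say that the slow branches already account for $\sum_{\alpha_q<\alpha} n_q$ independent relations. A priori the boundary of a slow generator could land entirely in the span of boundaries of other slow generators, making $\im(\partial|_{F_\alpha})$ smaller than you need. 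This is exactly the heart of the theorem, and your proposal does not supply it.

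The paper resolves this with a different device. Rather than bounding $\rk(\im\partial|_{F_\alpha})$ from below, it introduces an auxiliary acyclic subcomplex $\hat C^{(d+1)}_\cdot \subset C^{(d+1)}_\cdot$ that computes the homology of a \emph{ball}: in degree $d$ it is generated by an explicit subset $K^{(d+1)}_q \subset \Sigma^{(d+1)}_q$ of size exactly $n^{(d+1)}_q$ for each branch, obtained from a Cerf-diagram tilting argument. The crucial \cref{lem:C_iso} shows that $\hat\partial_d$ induces an \emph{isomorphism} $\hat C^{(d+1)}_d \to \hat C^{(d+1)}_{d-1}/\hat C^{(d)}_{d-1}$. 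Passing to the quotient by $F_\alpha$ (which respects the boundary by \cref{cor:filtration}) then gives a \emph{surjection} whose source has rank $\sum_{\alpha_q^{(d+1)}\geq\alpha} n_q^{(d+1)} = h(\TPS^{(d+1)}_{\geq\alpha})$ and whose target has rank $\sum_{\alpha_q^{(d)}\geq\alpha}(m_q^{(d)}-n_q^{(d)}) = \ell(\TPS^{(d)}_{\geq\alpha}) - h(\TPS^{(d)}_{\geq\alpha})$, yielding \cref{eq:hlh} directly. The point is that the $K$-subset, not the whole $\Sigma$, is what makes the counting go through: you need to \emph{produce} the $n_q$ generators per branch, and the tilting construction does exactly that.
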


\begin{cor} \label{cor:upper_half}
The virtual vertices of $\tTPS(f,0)$ (see \cref{block:vertices})
lie in the closed upper halfplane.
\end{cor}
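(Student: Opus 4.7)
The plan is to prove $h\bigl(\tTPS^{(d+1)}_{\geq\alpha}(f,0)\bigr) \geq 0$ for all $\alpha$ and all $d \in \{0,1,\ldots,n\}$ by induction on $d$. Since the $y$-coordinate of a virtual vertex of $A \in K\new$ at position $\alpha$ is $h(A_{\geq\alpha})$ by \cref{block:vertices}, the case $d=n$ is exactly the statement of the corollary. For brevity I write $\tilde H_d(\alpha) = h\bigl(\tTPS^{(d+1)}_{\geq\alpha}(f,0)\bigr)$ and $H_d(\alpha) = h\bigl(\TPS^{(d+1)}_{\geq\alpha}(f,0)\bigr)$, with the convention $\tilde H_{-1} \equiv 0$. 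Truncating the identity \cref{eq:J_AJ_AJ} at $\alpha$ and taking heights yields the recursion $\tilde H_d(\alpha) = H_d(\alpha) - \tilde H_{d-1}(\alpha)$, equivalently $H_d(\alpha) = \tilde H_d(\alpha) + \tilde H_{d-1}(\alpha)$.

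The base case $d=0$ is immediate from \cref{rem:tps_props}\cref{it:n0}: $\tTPS^{(1)}(f,0) = \TPS^{(1)}(f,0) = \newpol{e}{1}$, whose truncations have height $0$ or $1$. For the inductive step I combine \cref{thm:mono} with the rate lower bound in \cref{rem:tps_props}\cref{it:alpha_mult}. The latter ensures that every rate $\alpha_q$ in the support of $\TPS^{(d)}(f,0)$ satisfies $\alpha_q \geq \mult(X^{(d)},0) = \mult(X,0) \geq 2$, using that generic hyperplane sections preserve multiplicity of an isolated hypersurface singularity. Since a truncation at $\alpha$ only retains summands with $\alpha_q \geq \alpha$, we have $\ell\bigl(\TPS^{(d)}_{\geq\alpha}(f,0)\bigr) \geq 2\,h\bigl(\TPS^{(d)}_{\geq\alpha}(f,0)\bigr)$, and hence \cref{eq:hlh} of \cref{thm:mono} upgrades to $H_d(\alpha) \geq H_{d-1}(\alpha)$. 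Substituting $H_{d-1}(\alpha) = \tilde H_{d-1}(\alpha) + \tilde H_{d-2}(\alpha)$ from the recursion one step down, I obtain
\[
  \tilde H_d(\alpha) = H_d(\alpha) - \tilde H_{d-1}(\alpha) \geq H_{d-1}(\alpha) - \tilde H_{d-1}(\alpha) = \tilde H_{d-2}(\alpha) \geq 0
\]
by the inductive hypothesis, closing the induction.

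The main obstacle I foresee is uniformly controlling the multiplicity bound across all iterated generic hyperplane sections---this is what forces the ``$\geq 2$'' that turns the raw bound $L_{d-1}-H_{d-1}$ from \cref{thm:mono} into $H_{d-1}$. In the Morse-point-in-an-even-number-of-variables case of \cref{rem:tps_props}\cref{it:AJ0}, where $\tTPS(f,0) = 0$, the statement is vacuous; outside that degenerate situation the argument above chains \cref{thm:mono}, the recursion \cref{eq:J_AJ_AJ}, and the rate lower bound into a short induction, so no further technology beyond the previous theorem is needed.
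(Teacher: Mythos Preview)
Your proof is correct and follows essentially the same approach as the paper's: both derive the monotonicity $H_d(\alpha) \geq H_{d-1}(\alpha)$ by combining \cref{eq:hlh} of \cref{thm:mono} with the rate bound $m_q^{(d)} \geq 2 n_q^{(d)}$ (equivalently $\ell \geq 2h$) coming from \cref{rem:tps_props}\cref{it:alpha_mult}, and then conclude nonnegativity of the alternating sum $\tilde H_n(\alpha)$. The only cosmetic difference is that the paper observes directly that an alternating sum of a monotone sequence with leading sign $+$ is nonnegative, whereas you phrase this as an induction showing $\tilde H_d \geq \tilde H_{d-2} \geq 0$; these are the same argument.
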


\begin{proof}
Since $f$ is singular, we have $m_q^{(d+1)} \geq 2n_q^{(d+1)}$ for
all $d$ and $q$. Therefore, \cref{eq:hlh} gives
\[
  h\left( \TPS^{(n+1)}_{\geq \alpha}(f,0) \right)
  \geq
  h\left( \TPS^{(n)}_{\geq \alpha}(f,0) \right)
  \geq
  \cdots
  \geq
  h\left( \TPS^{(0)}_{\geq \alpha}(f,0) \right)
  =
  1
\]
As a result, we have a nonnegative alternating sum
\[
  h\left( \tTPS_{\geq \alpha}(f,0) \right)
  =
  \sum_{d=0}^n
  (-1)^{n-d} h\left( \TPS^{(d+1)}_{\geq \alpha}(f,0) \right)
  \geq 0. \qedhere
\]
\end{proof}

\begin{cor} \label{cor:lc_degs}
Let $r$ be the rank of the Hessian matrix of $f$ at $0$
and assume that either
\[
  d+1 > r.
\]
or that $d$ is even (see \cref{rem:tps_props}\cref{it:AJ0}).
Then, the following inequalities hold for the
alternating Jacobian polygon:
\begin{enumerate}

\item \label{it:ttps_ineq}
For $\alpha \in \Q$, we have
\[
  \ell\left(\tTPS_{\geq \alpha}^{(d+1)}(f,0)\right)
  \geq
  0
\]
with equality if and only if $\alpha > \deg \TPS^{(d+1)}(f,0)$.

\item \label{it:lc}
The leading coefficient of the alternating Jacobian polygon is positive
\begin{equation} \label{eq:lcAJ}
  \lc\tTPS^{(d+1)}(f,0) > 0.
\end{equation}

\item \label{it:degs}
The degree of the Jacobian polygon increases with dimension
\begin{equation}
  \deg\TPS^{(d+1)}(f,0)
  \geq \deg\TPS^{(d)}(f,0), \qquad
  d = 1,\ldots,n.
\end{equation}

\item \label{it:both_degs}
The Jacobian and alternating Jacobian polygons have the same degree
\begin{equation} \label{eq:degJAJ}
  \deg \tTPS^{(d+1)}(f,0) = \deg \TPS^{(d+1)}(f,0).
\end{equation}
\end{enumerate}
\end{cor}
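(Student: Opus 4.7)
The plan is to establish the four parts in the order (iii), (i), (ii), (iv), since later parts reduce formally to earlier ones.

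\emph{Part (iii)} follows directly from \cref{thm:mono}: if $\deg\TPS^{(d+1)}(f,0) < \deg\TPS^{(d)}(f,0)$, set $\alpha = \deg\TPS^{(d)}(f,0)$. By \cref{rem:tps_props}\cref{it:alpha_mult}, $\alpha \geq \mult(X,0) \geq 2$, so \cref{thm:mono} yields $\ell(\TPS^{(d+1)}_{\geq\alpha}(f,0)) \geq (\alpha-1)\,\ell(\TPS^{(d)}_{\geq\alpha}(f,0)) > 0$, contradicting $\alpha > \deg\TPS^{(d+1)}(f,0)$.

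\emph{Part (i), nonnegativity,} I would prove by induction on $d$ using the identity $\tTPS^{(d+1)}(f,0) = \TPS^{(d+1)}(f,0) - \tTPS^{(d)}(f,0)$ from \cref{eq:J_AJ_AJ}, starting from $\tTPS^{(1)}(f,0) = \TPS^{(1)}(f,0) = \newpol{\mult(X,0)}{1}$. Nonnegativity holds trivially in the Morse-exception case, so the induction step does not require the parity/rank hypothesis at intermediate stages. The inductive hypothesis provides $\ell(\tTPS^{(d-1)}_{\geq\alpha}) \geq 0$, hence
\[
  \ell(\tTPS^{(d)}_{\geq\alpha}) = \ell(\TPS^{(d)}_{\geq\alpha}) - \ell(\tTPS^{(d-1)}_{\geq\alpha}) \leq \ell(\TPS^{(d)}_{\geq\alpha}).
\]
Combined with $\ell(\TPS^{(d+1)}_{\geq\alpha}) \geq \ell(\TPS^{(d)}_{\geq\alpha})$ from \cref{thm:mono} (valid since $\alpha \geq 2$), this gives $\ell(\tTPS^{(d+1)}_{\geq\alpha}) \geq 0$.

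\emph{Equality clause and main obstacle.} The ``if'' direction is immediate from (iii). For the ``only if'' direction, I must show strict positivity when $\alpha \leq \deg\TPS^{(d+1)}(f,0)$. For $\alpha > 2$, the factor $(\alpha-1) > 1$ in \cref{thm:mono} gives $\ell(\TPS^{(d+1)}_{\geq\alpha}) > \ell(\TPS^{(d)}_{\geq\alpha})$ whenever the right side is positive, so the chain above yields $\ell(\tTPS^{(d+1)}_{\geq\alpha}) > 0$; if instead $\ell(\TPS^{(d)}_{\geq\alpha}) = 0$, then $\ell(\tTPS^{(d)}_{\geq\alpha}) = 0$ and so $\ell(\tTPS^{(d+1)}_{\geq\alpha}) = \ell(\TPS^{(d+1)}_{\geq\alpha}) > 0$. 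The main obstacle is the endpoint $\alpha = 2$, which also handles $\alpha < 2$ since the support lies in $[2,\infty]$. Here I would invoke \cref{rem:tps_props}\cref{it:tps_telescope}: $\ell(\tTPS^{(d+1)}(f,0)) = \mu^{(d+1)} + (-1)^d$, and by \cref{rem:tps_props}\cref{it:AJ0} this is strictly positive exactly under the parity/rank hypothesis, since the Morse exception at dimension $d+1$ corresponds to $d$ odd and $d+1 \leq r$.

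\emph{Parts (ii) and (iv).} Setting $\beta = \deg\tTPS^{(d+1)}(f,0)$, we have $\ell(\tTPS^{(d+1)}_{\geq\beta}) = \lc(\tTPS^{(d+1)}(f,0)) \cdot \ell(\{\beta\})$; by (i) this is both nonnegative and nonzero, establishing (ii). For (iv), $\deg\tTPS^{(d+1)} \leq \deg\TPS^{(d+1)}$ since $\tTPS^{(d+1)}$ is a signed combination of the polygons $\TPS^{(e+1)}(f,0)$ for $e \leq d$, each of degree at most $\deg\TPS^{(d+1)}(f,0)$ by (iii); a strict inequality would force $\ell(\tTPS^{(d+1)}_{\geq\deg\TPS^{(d+1)}(f,0)}) = 0$, contradicting the equality clause of (i).
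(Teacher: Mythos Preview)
Your proof is correct and follows essentially the same approach as the paper. The paper argues directly with the decreasing chain $\ell(\TPS^{(d+1)}_{\geq\alpha}) \geq \ell(\TPS^{(d)}_{\geq\alpha}) \geq \cdots \geq 0$ and takes its alternating sum, whereas you phrase the same computation as an induction via the identity $\tTPS^{(d+1)} = \TPS^{(d+1)} - \tTPS^{(d)}$; both hinge on \cref{thm:mono}, the factor $\alpha-1>1$ for strictness when $\alpha>2$, and the telescope formula $\ell(\tTPS^{(d+1)}) = \mu^{(d+1)}+(-1)^d$ for $\alpha\leq 2$.
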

\begin{proof}
To prove \cref{it:ttps_ineq}, consider first the case
$\alpha \leq 2$. Since $\supp\TPS^{(d+1)}(f,0) \subset [2,\infty)$
by \cref{rem:tps_props}\cref{it:alpha_mult}, we have
$\TPS_{\geq \alpha}^{(d+1)}(f,0) = \TPS^{(d+1)}(f,0)$, and so
by \cref{rem:tps_props}\cref{it:tps_length} and \cref{it:tps_telescope}
\[
  \ell\left(\TPS^{(d+1)}_{\geq \alpha}(f,0)\right)
  =
  \mu^{(d+1)} + (-1)^d > 0.
\]
Consider next the case $2 < \alpha \leq \deg \TPS^{(d+1)}(f,0)$.
Then \cref{eq:mono} gives
\begin{equation} \label{eq:terms}
  \ell\left(\TPS^{(d+1)}_{\geq \alpha}(f,0)\right)
  \geq
  \ell\left(\TPS^{(d)}_{\geq \alpha}(f,0)\right)
  \geq \cdots \geq
  \ell\left(\TPS^{(0)}_{\geq \alpha}(f,0)\right)
  = 0
\end{equation}
with the first inequality strict.
Indeed, we have $\alpha - 1 > 1$ since $\alpha > 2$,
and $\ell\left(\TPS^{(d+1)}_{\geq \alpha}(f,0)\right) > 0$ since
$\alpha \leq \deg \TPS^{(d+1)}(f,0)$.
As a result, the alternating sum
\[
  \ell\left(\tTPS^{(d+1)}_{\geq \alpha}(f,0)\right)
  =
  \sum_{i=0}^{d+1} (-1)^i 
  \left(\TPS^{(d+1-i)}_{\geq \alpha}(f,0)\right)
\]
is strictly positive.
Next, consider the case $\alpha > \deg \TPS^{(d+1)}$.
Then all the inequalities in \cref{eq:terms} are equalities, and all
the terms zero, proving 
\cref{it:ttps_ineq}, which immediately gives \cref{it:lc}.

Now, \cref{it:degs} follows immediately from \cref{eq:mono}.
Along with the definition of the alternating Jacobian polygon,
this gives $\leq$ in \cref{eq:degJAJ}. But $\geq$ follows from
\cref{eq:lcAJ} and \cref{eq:J_AJ_AJ}
\end{proof}

\begin{cor}
The \L ojasiewicz exponent increases with dimenson
\[
\pushQED{\qed}
  \Loj\left(f^{(d+1)},0\right)
  \geq
  \Loj\left(f^{(d)},0\right), \qquad
  d = 1,2,\ldots,n.
  \qedhere
\popQED
\]
\end{cor}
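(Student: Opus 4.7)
The plan is to combine two ingredients already available in the paper: Teissier's identification of the \L ojasiewicz exponent with the degree of the Jacobian polygon, recorded as $\Loj(h,0) = \deg \TPS(h,0) - 1$ in \cref{rem:tps_props}\cref{it:tps_loj}, and the monotonicity of degrees of Jacobian polygons across dimension from \cref{cor:lc_degs}\cref{it:degs}. Applying the first to both $h = f^{(d+1)}$ and $h = f^{(d)}$ reduces the desired inequality
\[
  \Loj\left(f^{(d+1)},0\right) \geq \Loj\left(f^{(d)},0\right)
\]
to
\[
  \deg \TPS^{(d+1)}(f,0) \geq \deg \TPS^{(d)}(f,0),
\]
and one subtracts $1$ from both sides.

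The only step that requires care is the hypothesis attached to \cref{cor:lc_degs}, namely that $d+1 > r$ or $d$ is even, where $r$ is the rank of the Hessian of $f$ at $0$. The expected main obstacle is therefore disposing of the excluded case, $d$ odd and $d+1 \leq r$. The plan here is a direct linear algebra argument: restricting a quadratic form of rank $r \geq d+1$ to a generic linear subspace of dimension $d+1$ (respectively $d$) yields a quadratic form of rank $\min(r,d+1) = d+1$ (respectively $d$). Since $x_0,\ldots,x_n$ are generic with respect to $f$, both $f^{(d+1)}$ and $f^{(d)}$ are then Morse, and \cref{rem:tps_props}\cref{it:n0} (together with $\TPS = \newpol{2}{1}$ for a Morse point, cf.\ \cref{rem:tps_props}\cref{it:AJ0}) yields $\Loj(f^{(d+1)},0) = 1 = \Loj(f^{(d)},0)$, making the inequality a trivial equality.

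Putting these pieces together takes at most a few lines: one sentence to invoke Teissier's formula, one to invoke \cref{cor:lc_degs}\cref{it:degs} in the generic case, and one to handle the Morse case via the rank computation above. No new computation is required beyond the rank-of-restriction argument, which is routine.
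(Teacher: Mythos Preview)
Your proposal is correct and follows the same route as the paper: combine Teissier's identity $\Loj = \deg\TPS - 1$ with \cref{cor:lc_degs}\cref{it:degs}. The paper records no proof beyond the $\qedsymbol$, treating the statement as immediate from these two ingredients.

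Your extra case analysis for $d$ odd and $d+1 \leq r$ is correct but not actually needed: although \cref{cor:lc_degs} is stated under that hypothesis, the proof of item \cref{it:degs} there invokes only \cref{eq:mono} from \cref{thm:mono}, which carries no such restriction. So $\deg\TPS^{(d+1)}(f,0) \geq \deg\TPS^{(d)}(f,0)$ holds for all $d=1,\ldots,n$ unconditionally, and your Morse argument, while a nice sanity check, can be omitted.
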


\begin{cor} \label{cor:loj_reduced}
The \L ojasiewicz exponent of $f$ at $0$ is given by the degree
of the alternating Jacobian polygon minus one
\[
  \Loj(f,0) = \deg \tTPS(f,0) - 1
\]
unless $n$ is odd and $f$ has a Morse point at $0$. \qed
\end{cor}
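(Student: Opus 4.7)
The plan is to combine two earlier results directly. By Teissier's result, recalled in \cref{rem:tps_props}\cref{it:tps_loj}, the \L ojasiewicz exponent is expressed as
\[
  \Loj(f,0) = \deg \TPS(f,0) - 1.
\]
So it suffices to show that, under the hypothesis of the corollary, the Jacobian and alternating Jacobian polygons have the same degree, that is,
\[
  \deg \TPS(f,0) = \deg \tTPS(f,0).
\]

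For this I would invoke \cref{cor:lc_degs}\cref{it:both_degs} with $d = n$, which gives precisely this equality provided either $d+1 = n+1 > r$, where $r$ is the rank of the Hessian of $f$ at $0$, or $d = n$ is even. The first condition fails exactly when $r = n+1$, i.e., when the Hessian at $0$ is nondegenerate, which is the definition of $f$ having a Morse point at $0$. Therefore both conditions fail simultaneously if and only if $f$ is Morse and $n$ is odd, which is exactly the case excluded in the statement. In all other cases, one of the two conditions holds, so the hypothesis of \cref{cor:lc_degs}\cref{it:both_degs} is satisfied and the equality of degrees applies.

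Combining, for any $f$ not in the excluded case we obtain
\[
  \Loj(f,0) = \deg \TPS(f,0) - 1 = \deg \tTPS(f,0) - 1,
\]
as required. There is no substantial obstacle: the entire content has already been packaged into \cref{cor:lc_degs}, whose proof in turn relied on the vanishing rate filtration of \cref{cor:filtration} and the monotonicity inequalities of \cref{thm:mono}. The only point requiring care is matching the hypothesis ``$f$ Morse and $n$ odd'' with the failure of the two alternative conditions in \cref{cor:lc_degs}, which is immediate from the characterization of Morse points via the rank of the Hessian.
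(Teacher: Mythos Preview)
Your proof is correct and matches the paper's approach exactly: the paper marks the corollary with a bare \qed, indicating it follows immediately from \cref{rem:tps_props}\cref{it:tps_loj} and \cref{cor:lc_degs}\cref{it:both_degs}, which is precisely the combination you spell out.
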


\begin{block}
Let $\Delta$ be the \emph{Cerf diagram} associated with
the functions $f$ and $g$, that is, the image of 
the polar curve under the map
\[
  \Phi = (g,f):(\C^{n+1},0) \to (\C^2,0).
\]
We will use coordinates $u,v$ in $\C^2$, so that this map is given by
$u = g$ and $v = f$.
For any $a,b,c\in\C$, set
\[
  L_{a,b,c} = \set{(u,v)\in\C^2}{au + bv = c}.
\]
Writing the polar curve as a union of branches
$P = P_1 \cup \ldots \cup P_l$, denote by
$\Delta_q$ the image of $P_q$ under $\Phi$. Then
\[
  (X,P_q)_{\C^{n+1},0} = (L_{0,1,0}, \Delta_q)_{\C^2,0},\qquad
  (H,P_q)_{\C^{n+1},0} = (L_{1,0,0}, \Delta_q)_{\C^2,0}.
\]
Furthermore, we have the Milnor fiber
\[
  F_{\epsilon,\eta}
  = \Phi^{-1}\left(L_{0,1,\eta} \right)
    \cap B^{2n+2}_\epsilon
\]
and the map $F_{\epsilon,\eta} \to L_{0,1,\eta}$ has critical set
$\Sigma = P \cap F_{\epsilon,\eta}$. For each $q=1,\ldots,l$, we can write
\[
  \Delta_q \cap L_{0,1,\eta} = \{d_{q,1},\ldots,d_{q,m_q}\},\qquad
  P_q \cap F_{\epsilon,\eta} = \{c_{q,1},\ldots,c_{q,m_q}\}
\]
so that $\Phi(c_{q,j}) = d_{q,j}$ (recall $m_q = (X,P_q)_0$). We can lift these points to
continuous paths $c_{q,j}:[0,1] \to \C^2$ and $d_{q,j} \to \C^{n+1}$,
such that that for every $t \in [0,1]$
\[
\begin{split}
  \Delta_q \cap L_{-t\eta,1,(1-t)\eta}
  &= \{d_{q,1}(t),\ldots,d_{q,m_q}(t)\}, \\
  P_q \cap \Phi^{-1}(L_{-t\eta,1,(1-t)\eta}) \cap B_\epsilon^{2n+2}
  &= \{c_{q,1}(t),\ldots,c_{q,m_q}(t)\}
\end{split}
\]
and $c_{q,j}(0) = c_{q,j}$ and  $d_{q,j}(0) = d_{q,j}$.
\end{block}

\begin{figure}[ht]
\begin{center}
\input{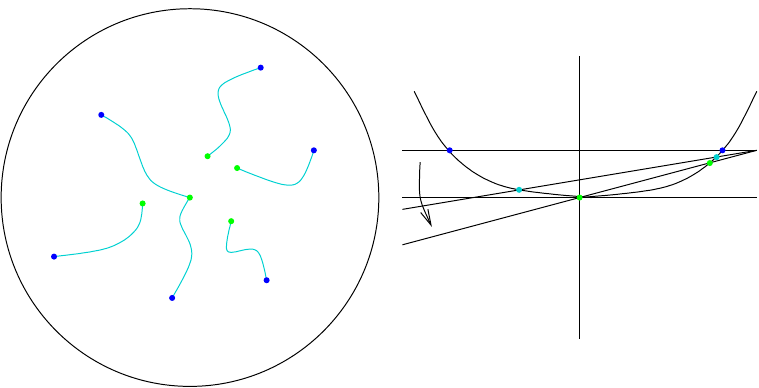_t}
\caption{In this example we have $m_q = 6$ and $n_q = 2$.}
\label{fig:tilt}
\end{center}
\end{figure}

\begin{definition}
\begin{enumerate}

\item
For $q = 1,\ldots,l$, set
\[
  K_q^{(d+1)}
  =
  \set{ d^{(d+1)}_{q,l} \in \sigma^{(d+1)} }{ c^{(d+1)}_{q,l}(1) = 0 }
\]

\item
Denote by $(\hat C_\cdot^{(d+1)}, \hat \partial_\cdot)$
the subcomplex of $(C_\cdot, \partial_\cdot)$ generated
by $\Sigma^{(i)}$ in degree $i \leq d$, and by $K^{(d+1)}$
in degree $d$.

\item
The complex $(\hat C_\cdot, \hat \partial_\cdot)$ is a filtered
complex by setting
\[
  F_\alpha \hat C_d = F_\alpha C_d \cap \hat C_d.
\]

\end{enumerate}
\end{definition}

\begin{lemma}
For every $q$, we have
\[
  \left| K^{(d+1)}_q \right|
  =
  n_q^{(d+1)}.
\]
\end{lemma}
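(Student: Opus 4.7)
The plan is to work on the normalization $\widetilde{P}_q^{(d+1)} \to P_q^{(d+1)}$ with local parameter $t$, pull the deforming family of lines back to $\widetilde{P}_q^{(d+1)}$, and then count roots at $t=0$ using a direct computation. Throughout, I will suppress the superscript $(d+1)$ to ease notation.

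The first step is to write the pullbacks of $f$ and $g$ to $\widetilde{P}_q$ in local form. By the very definitions in \cref{def:m_n_alpha}, the pullback of $f$ has order $m_q$ in $t$ and the pullback of $g$ has order $n_q$ in $t$, so we can write
\[
  f(t) = t^{m_q} u(t), \qquad g(t) = t^{n_q} v(t),
\]
with $u, v$ units at $t=0$. The intersection points $c_{q,l}(s)$ lift to points $\widetilde{c}_{q,l}(s) \in \widetilde{P}_q$ solving the equation coming from $L_{-s\eta, 1, (1-s)\eta}$, namely
\[
  -\,s\eta\, g(t) + f(t) = (1-s)\eta.
\]
At $s = 0$ this reads $t^{m_q}u(t) = \eta$, giving $m_q$ distinct simple roots of small magnitude (these are the $\widetilde{c}_{q,l}(0)$).

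The second step is to analyze the equation at $s = 1$, which becomes
\[
  t^{n_q}\bigl(t^{m_q - n_q} u(t) - \eta v(t)\bigr) = 0.
\]
By \cref{rem:tps_props}\cref{it:alpha_mult} we have $\alpha_q \geq \mult(X,0) \geq 2$, so $m_q \geq 2n_q > n_q$. The factor $t^{m_q - n_q}u(t) - \eta v(t)$ then has $m_q - n_q$ simple roots whose magnitudes are all of order $\eta^{1/(m_q - n_q)}$, bounded and bounded away from $0$ uniformly for $\eta$ small; shrinking $\eta$ relative to $\epsilon$, these roots lie in the Milnor ball. The remaining factor $t^{n_q}$ contributes a root of multiplicity $n_q$ at $t = 0$.

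The third step is to transport this count along the homotopy in $s$. Since the total number of solutions (with multiplicity) inside a small fixed disk in $\widetilde{P}_q$ is locally constant in $s$, and since no root escapes through the boundary of the Milnor tube for our choice of $\epsilon, \eta$, the $m_q$ continuous paths $\widetilde{c}_{q,l}(s)$ exist for all $s \in [0,1]$. Precisely $n_q$ of them terminate at $t = 0$ and the other $m_q - n_q$ terminate at nonzero points; the former are exactly those with $c_{q,l}(1) = 0$ after projecting back to $P_q$. This yields $|K_q^{(d+1)}| = n_q^{(d+1)}$.

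The main technical point to handle carefully is the well-definedness and non-crossing of the continuous lifts $\widetilde{c}_{q,l}(s)$ across the whole interval $s\in[0,1]$, and the verification that no path leaves the Milnor tube during the deformation. This is essentially a compactness-plus-continuity argument: one argues that the total degree of the equation in $t$ is preserved in $s$ on a fixed small disk, and that for $0 < \eta \mleq \epsilon \mleq 1$ the magnitudes of all roots stay inside the Milnor ball by the explicit $\eta^{1/(m_q - n_q)}$ estimate above; everything else is the order-of-vanishing computation.
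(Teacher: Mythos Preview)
Your proof is correct and follows the same geometric idea as the paper: rotate the line $L_{-s\eta,1,(1-s)\eta}$ from $s=0$ to $s=1$ and count how many of the $m_q$ intersection points converge to the origin. The only difference is where the count is carried out: the paper works downstairs on the Cerf diagram $\Delta_q\subset\C^2$, invoking that a line not tangent to $\Delta_q$ meets it in $\mult(\Delta_q,0)=n_q$ points near $0$, whereas you work upstairs on the normalization $\widetilde P_q$ and factor the pulled-back equation at $s=1$ as $t^{n_q}\bigl(t^{m_q-n_q}u-\eta v\bigr)$. Your version has the small advantage of not implicitly using that $\Phi|_{P_q}$ is birational onto $\Delta_q$; the paper's version is shorter because it cites the multiplicity--intersection fact directly.
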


\begin{proof}
Recall $n_q^{(d+1)} = (H^{(d)}, P^{(d+1)}_q)_{H^{(d+1)},0}$
by \cref{def:m_n_alpha}.
For $t \neq 0$, the lines $L_{-t\eta,1,(1-t)\eta}$ are not parallel to
a tangent of $\Delta$, i.e. the $u$-axis. As a result,
if $0 \mleq t < 1$, then, in a neighborhood of $0\in\C^2$, there are precisely
\[
  \left(L_{-\eta, 1, 0}, \Delta_q^{(d+1)}\right)_{\C^2,0}
  = \mult(\Delta_q^{(d+1)},0) = n_q^{(d+1)}
\]
points in the
intersection $L_{-\eta, 1, 0} \cap \Delta_q^{(d+1)})_{\C^2,0}$, which
corresopnd precisely the points $c_{q,j}(t)$ which then converge to zero
when $t \to 1$.
\end{proof}

\begin{lemma} \label{lem:acyclic}
The complex $(\hat C_\cdot^{(d+1)},\hat \partial_\cdot)$ has the homology
of a ball.
\end{lemma}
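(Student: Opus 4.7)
The plan is to realize $(\hat{C}_\cdot^{(d+1)}, \hat{\partial}_\cdot)$ as the Morse--Smale complex of $\psi^{(d+1)}$ restricted to a subspace $\hat F^{(d+1)} \subset F^{(d+1)}_{\epsilon,\eta}$ which is homeomorphic to a topological ball; acyclicity is then automatic. The construction of $\hat F^{(d+1)}$ follows the tilting argument of \cite{le_perron} and proceeds by induction on $d$.

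Concretely, I would consider the one-parameter family of slices $F_t^{(d+1)} = \Phi^{-1}(L_{-t\eta,1,(1-t)\eta}) \cap B^{2d+2}_\epsilon$ interpolating from the Milnor fiber at $t=0$ to the tilted fiber $\Phi^{-1}(L_{-\eta,1,0}) \cap B^{2d+2}_\epsilon$ at $t=1$. By the definition of $K^{(d+1)}$, the critical points $c^{(d+1)}_{q,\ell}(t)$ of the $u$-projection on $F_t^{(d+1)}$ that converge to $0 \in H^{(d+1)}$ as $t \to 1$ are exactly those indexed by $K^{(d+1)}$; for $t$ slightly less than $1$ they cluster inside an arbitrarily small ball about the origin. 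Parallel transport along this family then carries a small neighborhood of this cluster (together with the inductively constructed ball $\hat F^{(d)}$ glued in via the collar $\iota^{(d+1)}$) back to $t=0$, producing a subspace $\hat F^{(d+1)} \subset F^{(d+1)}_{\epsilon,\eta}$ whose critical set for $\psi^{(d+1)}$ is precisely $\bigcup_{i<d}\Sigma^{(i+1)} \cup K^{(d+1)}$.

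The key inductive claim is that $\hat F^{(d+1)}$ is a topological ball. The base $d=0$ is immediate since $|K^{(1)}|=1$. For the inductive step, $\hat F^{(d+1)}$ is obtained from the ball $\hat F^{(d)}$ by attaching $|K^{(d+1)}|$ handles of index $d$; the content of the L\^e--Perron construction is that each such handle attaches along a sphere bounding a ball in $\hat F^{(d)}$ and contributes a cancelling pair, because the corresponding critical point has collapsed onto the origin under the tilt and hence its attaching data is trivialisable inside the previously built ball. Granting this, the restriction $\psi^{(d+1)}|_{\hat F^{(d+1)}}$ is a Morse function whose critical set is precisely the generating set of $\hat C_\cdot^{(d+1)}$, and by \cref{lem:trajs} the gradient trajectories of $\xi^{(d+1)}$ connecting these critical points remain inside $\hat F^{(d+1)}$. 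Thus the associated Morse--Smale complex coincides with $(\hat C_\cdot^{(d+1)}, \hat{\partial}_\cdot)$ and computes $H_\cdot(\hat F^{(d+1)})$, the homology of a ball.

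The main obstacle will be the inductive ball property, namely verifying that each $K^{(d+1)}$-handle attaches in a cancellable way. This is a delicate geometric assertion requiring a careful analysis of the stable and unstable manifolds of the $K^{(d+1)}$-critical points inside $F_t^{(d+1)}$ as $t \to 1$, and is the essential geometric content borrowed from the L\^e--Perron construction; everything else in the argument is a routine application of Morse theory and of the trajectory control provided by \cref{lem:trajs}.
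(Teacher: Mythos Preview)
Your proposal shares the tilting idea with the paper but takes a substantially longer route, and the detour contains a real gap.

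The paper's proof is a single observation: for $t$ close to $1$ and $\kappa$ small, the set
\[
  \left(\Phi^{(d+1)}\right)^{-1}\!\left(L_{-t\eta,1,(1-t)\eta} \cap \{|v|<\kappa\}\right) \cap B_\epsilon
\]
is homeomorphic to a Milnor fiber of the \emph{linear} function $g^{(d+1)}$, and is therefore a ball. The Morse--Smale complex of the restricted Morse function on this set is exactly $(\hat C_\cdot^{(d+1)},\hat\partial_\cdot)$. No induction on $d$, no transport back to $t=0$, and no handle-cancellation argument is required: the ball property is immediate from smoothness of $g^{(d+1)}$.

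Your version instead builds $\hat F^{(d+1)}$ inductively and argues that each $K^{(d+1)}$-handle attaches in a cancellable way. You correctly identify this as the crux, but as written it is a gap rather than a proof: the claim that the attaching sphere bounds a ball in $\hat F^{(d)}$ is asserted, not established, and the appeal to \cite{le_perron} does not by itself supply it in this setting. Separately, your use of \cref{lem:trajs} to confine trajectories to $\hat F^{(d+1)}$ is a misapplication---that lemma constrains vanishing rates along trajectories, not membership in a subspace.

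The remedy is to abandon the induction and recognise, as the paper does, that staying at the tilted parameter and cutting by $|v|<\kappa$ already produces a Milnor fiber of $g^{(d+1)}$; the ball property then costs nothing.
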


\begin{proof}
Choose $\kappa > 0$ small, and $t \in [0,1]$ near $1$,
such that $|v(d_{q,j}(t))| < \kappa$ if and only if
$d_{q,j} \in K^{(d+1)}$. Then the set
\[
  \left(\Phi^{(d+1)}\right)^{-1}
  \left(L_{-t\eta,1,(1-t)\eta} \cap \{|v|<\kappa\}\right)
  \cap B^{2n+2}_\epsilon
\]
is homeomorphic to a Milnor fiber for $g^{(d+1)}$, which is a ball.
The complex $\hat C^{(d+1)}_\cdot$ is then the Morse-Smale complex
for a Morse function and gradient-like vector field constructed
for this set, similarly as in \cref{block:morse_smale}.
\end{proof}

\begin{lemma} \label{lem:nk_ineq}
Assume that $1 \leq d \leq n$. Then
\begin{enumerate}

\item \label{it:nk_im}
The images $\im \partial_d^{(d+1)}$ and $\im \hat \partial_d^{(d+1)}$
coincide.

\item \label{it:nk_ineq}
For $\alpha \in \Q$, we have
\begin{equation} \label{eq:nk}
  \rk \ker
  \left(
    \frac{\hat C^{(d+1)}_{d-1}}{F_\alpha \hat C^{(d+1)}_{d-1}}
    \overset{\partial}{\to}
    \frac{\hat C^{(d+1)}_{d-2}}{F_\alpha \hat C^{(d+1)}_{d-2}}
  \right)
  \geq
  \sum_{\alpha_q^{(d+1)} \geq \alpha}
    m_q^{(d)} - n_q^{(d)}.
\end{equation}
\end{enumerate}
\end{lemma}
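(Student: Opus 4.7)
The plan is to prove \cref{it:nk_im} using the long exact sequence of the subcomplex pair $\hat C^{(d+1)}\subset C^{(d+1)}$ together with \cref{lem:acyclic}, and then to bootstrap this, applied one dimension lower, to obtain \cref{it:nk_ineq} via a rank--nullity argument on the filtered quotient.

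For \cref{it:nk_im}, I would form the short exact sequence of complexes
\[
  0 \to \hat C^{(d+1)}_\cdot \to C^{(d+1)}_\cdot \to Q_\cdot \to 0,
\]
where the quotient $Q_\cdot$ is concentrated in degree $d$ and equals $\Z\langle\Sigma^{(d+1)}\setminus K^{(d+1)}\rangle$ there. By \cref{lem:acyclic} the homology of $\hat C^{(d+1)}$ is that of a ball, while $C^{(d+1)}$ computes $H_\cdot(F^{(d+1)})$, which is concentrated in degrees $0$ and $d$. Running the long exact sequence (noting, for $d=1$, that $H_0(\hat C)\to H_0(C)$ is an isomorphism, both groups being $\Z$ generated by the class of a point in the same connected component) identifies the natural composition
\[
  \ker \partial_d^{(d+1)} \hookrightarrow C_d^{(d+1)} \twoheadrightarrow Q_d
\]
as an isomorphism. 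Consequently, every $p\in\Sigma^{(d+1)}\setminus K^{(d+1)}$ admits a $y\in\hat C_d^{(d+1)}$ with $\partial_d^{(d+1)}(p+y)=0$, so $\partial_d^{(d+1)}(p) = -\hat\partial_d^{(d+1)}(y) \in \im\hat\partial_d^{(d+1)}$. This gives the nontrivial inclusion; the reverse is obvious.

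For \cref{it:nk_ineq}, I would first observe that as Abelian groups $\hat C_{d-1}^{(d+1)} = C_{d-1}^{(d)}$ and $\hat C_{d-2}^{(d+1)} = C_{d-2}^{(d)}$, and that the differential in question coincides with $\partial_{d-1}^{(d)}$: the construction of $\xi^{(d+1)}$ through the product structure $\iota^{(d+1)}$ restricts to $\xi^{(d)}$ on $F^{(d)}$, where all the relevant critical points lie. Next, I would apply \cref{it:nk_im} \emph{one dimension down}, to the pair $(C^{(d)},\hat C^{(d)})$ (legitimate for $d\geq 2$), to obtain $\im\partial_{d-1}^{(d)} = \im\hat\partial_{d-1}^{(d)}$ as subgroups of $C_{d-2}^{(d)}$. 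Since differentials preserve the filtration by \cref{cor:filtration}, this equality descends to the filtered quotient, whence
\[
  \rk\,\im\bigl(\partial_{d-1}^{(d)} \bmod F_\alpha\bigr) \;\leq\; \rk\bigl(\hat C_{d-1}^{(d)}/F_\alpha\hat C_{d-1}^{(d)}\bigr) \;=\; h\bigl(\TPS^{(d)}_{\geq\alpha}(f,0)\bigr).
\]
Rank--nullity on $\partial_{d-1}^{(d)} \bmod F_\alpha$, whose domain $C_{d-1}^{(d)}/F_\alpha$ has rank $\ell\bigl(\TPS^{(d)}_{\geq\alpha}(f,0)\bigr)$, yields
\[
  \rk\ker \;\geq\; \ell\bigl(\TPS^{(d)}_{\geq\alpha}(f,0)\bigr) - h\bigl(\TPS^{(d)}_{\geq\alpha}(f,0)\bigr) \;=\; \sum_{\alpha_q^{(d)}\geq\alpha}\bigl(m_q^{(d)} - n_q^{(d)}\bigr),
\]
as claimed. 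The boundary case $d=1$ is immediate since $\hat C_{-1}^{(2)} = 0$ makes the kernel equal the entire domain.

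The main obstacle I anticipate is the careful identification of $\hat\partial_{d-1}^{(d+1)}$ with $\partial_{d-1}^{(d)}$ at the level of \emph{signed} trajectory counts, together with verifying that the vanishing-rate filtrations on $C^{(d)}_\cdot$ and on the degree-${\leq d-1}$ part of $\hat C^{(d+1)}_\cdot$ coincide under this identification. Once these compatibilities are in hand, the rest of the argument is pure linear algebra on a short exact sequence of complexes.
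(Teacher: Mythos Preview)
Your argument is correct and follows essentially the same route as the paper. For \cref{it:nk_im} the paper argues directly from exactness of both complexes in degree $d-1$ (so $\im = \ker\partial_{d-1}$ in each), which is the same homological content as your long-exact-sequence computation; for \cref{it:nk_ineq} the paper applies \cref{it:nk_im} one dimension down and then exhibits explicit kernel elements $x - y_x$ with $x\in\Sigma^{(d)}\setminus K^{(d)}$, which is the concrete version of your rank--nullity bound. The compatibility $\partial_{d-1}^{(d+1)} = \partial_{d-1}^{(d)}$ that you flag as the main obstacle is used tacitly by the paper as well (it underlies the inclusion $\hat C^{(d)}_\cdot \subset \hat C^{(d+1)}_\cdot$ invoked in \cref{lem:C_iso}), so you are being more explicit rather than introducing a new difficulty.
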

\begin{proof}
First, we prove \cref{it:nk_im}.
If $d=1$, then the homology of either complex in degree $0$ is free of
rank one, and we have
\[
  \im \partial_1^{(2)} = \im \hat \partial_1^{(2)} = 
  \set{\sum_{p \in \Sigma^{(1)}} a_p p \in C_0}
      {\sum_{p \in \Sigma^{(1)}} a_p = 0}
\]
If $d > 1$, then the complexes
$(C^{(d+1)}_\cdot, \partial^{(d+1)}_\cdot)$ and
$(\hat C^{(d+1)}_\cdot, \hat \partial^{(d+1)}_\cdot)$
are exact in degree $d-1$, and so both images coincide
with the kernel of $\partial_{d-1}^{(d+1)} = \hat \partial_{d-1}^{(d+1)}$
\[
    \im \partial_d^{(d+1)}
  = \ker \partial_{d-1}^{(d+1)}
  = \ker \hat \partial_{d-1}^{(d+1)}
  = \im \hat \partial_d^{(d+1)}.
\]

Next, we prove \cref{it:nk_ineq} using \cref{it:nk_im}.
For every element,
$x \in \Sigma^{(d+1)} \setminus K^{(d+1)}$, there exists a linear
combination $y_x$ of elements of $K^{(d+1)}$ such that
$\hat \partial(y_x) = \partial(x)$. As a result, the elements
$x - y_x$ for $x \in P_q^{(d)}$ satisfying
$\alpha_q^{(d)} \geq \alpha$ form a linearly independent
set of vectors modulo $F_\alpha$ in the kernel of the map $\partial$
on the left in \cref{eq:nk}.
For each $q$ such that $\alpha_q^{(d)} \geq \alpha$, there are
precisely $m_q^{(d)} - n_q^{(d)}$ of them.
\end{proof}

\begin{lemma} \label{lem:C_iso}
For $d=1,\ldots,n$, the morphism
\begin{equation} \label{eq:C_iso}
  \hat C_d^{(d+1)} \to \frac{\hat C^{(d+1)}_{d-1}}{\hat C^{(d)}_{d-1}}
\end{equation}
induced by $\hat \partial^{(d+1)}_d$ is an isomorphism.
\end{lemma}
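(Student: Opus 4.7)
The plan is to realise the map in question as the differential of a quotient complex, and to extract its bijectivity from a long exact sequence in homology together with the acyclicity provided by \cref{lem:acyclic}.

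First I would verify that $\hat C^{(d)}_\cdot$ sits naturally as a subcomplex of $\hat C^{(d+1)}_\cdot$: the underlying abelian groups coincide in degrees strictly below $d-1$, while in degree $d-1$ one has $\hat C^{(d)}_{d-1} = \Z\langle K^{(d)}\rangle$ embedded inside $\hat C^{(d+1)}_{d-1} = \Z\langle \Sigma^{(d)}\rangle$ via the inclusion $K^{(d)} \subseteq \Sigma^{(d)}$. Compatibility of the differentials reduces to the fact that $\xi^{(d+1)}$ is tangent to $F^{(d)}$ along $F^{(d)}$ and restricts there to a positive multiple of $\xi^{(d)}$. Indeed $\nabla|g^{(d+1)}|^2$ vanishes on the zero locus $F^{(d)} = \{g^{(d+1)} = 0\}$ (since $|g|^2$ attains its minimum there), while $\tilde\xi^{(d)}$ is built from $\xi^{(d)}$ through the product trivialisation $\iota^{(d+1)}$ and restricts to $\xi^{(d)}$ on the zero section. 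Uniqueness of integral curves then implies that any $\xi^{(d+1)}$-trajectory whose limits at $\pm\infty$ both lie in $F^{(d)}$ is contained in $F^{(d)}$ and is, up to reparametrisation, a $\xi^{(d)}$-trajectory, so the two Morse-Smale differentials agree on common generators.

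Having established the subcomplex, I would form the short exact sequence
\[
  0 \to \hat C^{(d)}_\cdot \to \hat C^{(d+1)}_\cdot \to Q_\cdot \to 0,
\]
so that $Q_\cdot$ is concentrated in degrees $d-1$ and $d$, with
\[
  Q_d = \Z\langle K^{(d+1)}\rangle, \qquad
  Q_{d-1} = \Z\langle \Sigma^{(d)}\setminus K^{(d)}\rangle,
\]
and whose differential $Q_d \to Q_{d-1}$ is precisely the morphism \cref{eq:C_iso}. By \cref{rem:tps_props}\cref{it:tps_length} together with the cardinality formula $|K^{(d+1)}_q| = n_q^{(d+1)}$ from the preceding lemma, both $Q_d$ and $Q_{d-1}$ are free abelian of rank $\sum_q n_q^{(d+1)} = \sum_q (m_q^{(d)} - n_q^{(d)}) = \mu^{(d)}$, so it suffices to prove that the differential of $Q$ is injective (equivalently, surjective).

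By \cref{lem:acyclic}, both $\hat C^{(d)}_\cdot$ and $\hat C^{(d+1)}_\cdot$ have the homology of a ball, namely $\Z$ in degree $0$ and vanishing elsewhere. The associated long exact sequence then forces $H_d(Q) = H_{d-1}(Q) = 0$: for $d \geq 3$ all neighbouring groups in the relevant fragments vanish automatically, while for $d \in \{1,2\}$ one also uses that the inclusion $\hat C^{(d)} \hookrightarrow \hat C^{(d+1)}$ induces an isomorphism on $H_0$, which is immediate since each $H_0$ is a copy of $\Z$ generated by the class of any single point of $\Sigma^{(1)}$, and these generators match. Because $Q$ is concentrated in degrees $d-1$ and $d$, the vanishing of its homology in these degrees means exactly that $Q_d \to Q_{d-1}$ has trivial kernel and trivial cokernel, hence is an isomorphism. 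The main technical hurdle is the subcomplex claim — verifying that the Morse-Smale differentials built from $\xi^{(d)}$ and from $\xi^{(d+1)}$ agree on the common generators — but once the tangency of $\xi^{(d+1)}$ to $F^{(d)}$ is extracted from the recursive construction, the remainder is formal homological algebra.
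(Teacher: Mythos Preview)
Your approach is essentially the same as the paper's: both form the quotient complex $Q_\cdot = \hat C^{(d+1)}_\cdot / \hat C^{(d)}_\cdot$, observe it is concentrated in degrees $d$ and $d{-}1$, and deduce from \cref{lem:acyclic} and the long exact sequence that $Q_\cdot$ is acyclic, whence the map \cref{eq:C_iso} is an isomorphism. The paper phrases the homology comparison as ``inclusion of a ball into a ball'' rather than checking $H_0$ by hand, but this is the same content.

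One small correction: your parenthetical ``injective (equivalently, surjective)'' is false for maps of free $\Z$-modules of equal finite rank (multiplication by $2$ on $\Z$ is injective but not surjective), so the rank computation does not by itself reduce the problem to one of the two. This does no damage, since you then prove $H_d(Q)=H_{d-1}(Q)=0$ directly, establishing both injectivity and surjectivity; just drop the rank paragraph, or rephrase it as a consistency check. Your extra care about the subcomplex inclusion is reasonable but arguably unnecessary: in the paper both $\hat C^{(d)}_\cdot$ and $\hat C^{(d+1)}_\cdot$ are defined as subcomplexes of the same ambient Morse--Smale complex $(C_\cdot,\partial_\cdot)$, so the differentials agree by definition and no separate tangency argument is needed.
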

\begin{proof}
The inclusion $\hat C^{(d)}_\cdot \subset \hat C^{(d+1)}_\cdot$ corresponds
to the inclusion of a ball in another ball. In particular, it induces
an isomorphism on homology, and so the corresponding
quotient complex is acyclic. In particular, we have an exact
sequence
\[
  \to \frac{ \hat C^{(d+1)}_{d+1} }{ \hat C^{(d)}_{d+1} }
  \to \frac{ \hat C^{(d+1)}_d     }{ \hat C^{(d)}_d     }
  \to \frac{ \hat C^{(d+1)}_{d-1} }{ \hat C^{(d)}_{d-1} }
  \to \frac{ \hat C^{(d+1)}_{d-2} }{ \hat C^{(d)}_{d-2} }
  \to
\]
The term to the left is $0$, since $C^{(d+1)}_{d+1} = 0$, and
the term to the right is $0$, since 
$\hat C^{(d+1)}_{d-2} = C_{d-2} = \hat C^{(d)}_{d-2}$.
As a result, the middle morphism is an isomorphism, which shows that
\cref{eq:C_iso} is an isomorphism, since $\hat C^{(d)}_d = 0$.
\end{proof}

\begin{proof}[Proof of \cref{thm:mono}]
For a fixed $\alpha$,
the height on the left hand side of \cref{eq:hlh} consists of terms
of the type $n_q^{(d+1)}$ with $\alpha^{(d+1)}_q \leq \alpha$.
As a result,
\[
  \ell\left(\TPS^{(d+1)}_{\geq 0}(f,0)\right)
  \geq
  \alpha h\left(\TPS^{(d+1)}_{\geq 0}(f,0)\right).
\]
Using the same argument for the height on the right hand side,
\cref{eq:mono} therefore results from multiplying \cref{eq:hlh} by $\alpha$.
Thus, we focus on \cref{eq:hlh}.

Assuming first that $d = 1$, set $e = \mult(X,0)$.
If $\alpha > e$, then, by \cref{rem:tps_props}\cref{it:n0},
the right hand side of \cref{eq:hlh} vanishes. Since the right
hand side is nonnegative, the inequality holds.
If, however, $\alpha \leq e$, then \cref{eq:hlh} reads as
\[
  \mu^{(1)} \geq \mu^{(1)} + \mu^{(0)} - \mu^{(0)}
\]
by \cref{rem:tps_props}\cref{it:alpha_mult} and \cref{it:tps_length},
which clearly holds.

For the rest of this proof, we assume that $d > 1$.
By \cref{lem:C_iso}, for any $\alpha \in \Q$, the induced map
\[
  \frac{\hat C^{(d+1)}_d}{F_\alpha \hat C^{(d+1)}_d}
  \to
  \frac{\hat C^{(d+1)}_{d-1}}
       {\hat C^{(d)}_{d-1} + F_\alpha \hat C^{(d+1)}_{d-1}}
\]
is surjective. As a result,
\begin{equation} \label{eq:rk_ineq}
  \rk \frac{\hat C^{(d+1)}_d}{F_\alpha \hat C^{(d+1)}_d}
  \geq
  \rk \frac{\hat C^{(d+1)}_{d-1}}
       {\hat C^{(d)}_{d-1} + F_\alpha \hat C^{(d+1)}_{d-1}}
\end{equation}
On one hand, the left side of \cref{eq:rk_ineq} is freely generated by
$p \in K_q^{(d+1)}$, and so
\begin{equation} \label{eq:one_hand}
  \rk \frac{\hat C^{(d+1)}_d}
           {F^{\vphantom{d}}_{\vphantom{d}\alpha} \hat C^{(d+1)}_d}
  =
  \sum_{\alpha^{(d+1)}_q \geq \alpha} n^{(d+1)}_q
  =
  h\left(\TPS^{(d+1)}_{\geq \alpha}(f,0)\right).
\end{equation}
On the other hand, the right side of \cref{eq:rk_ineq} is freely generated
by $p \in \Sigma_q^{(d)} \setminus K_q^{(d)}$, and so
\begin{equation} \label{eq:other_hand}
  \rk \frac{\hat C^{(d+1)}_{d-1}}
       {\hat C^{(d)}_{d-1} + F_\alpha \hat C^{(d+1)}_{d-1}}
  =
  \sum_{\alpha_q^{(d)} \geq \alpha} m_q^{(d)} - n_q^{(d)}
  =
  \ell\left(\TPS^{(d)}_{\geq \alpha}(f,0)\right)
  -
  h\left(\TPS^{(d)}_{\geq \alpha}(f,0)\right).\qedhere
\end{equation}
\end{proof}

\begin{block} \label{block:key}
In the above proof, we prove the key inequality
\begin{equation} \label{eq:key}
  \sum_{\alpha^{(d+1)}_q \geq \alpha} n^{(d+1)}_q
  \geq
  \sum_{\alpha_q^{(d)} \geq \alpha} m_q^{(d)} - n_q^{(d)}.
\end{equation}
We provide here a purely visual argument for this equality,
sketched out in \cref{fig:traj_scheme}.
What lies below a black horizontal line is a Milnor fiber $F^{(d+1)}$.
The blue and red points
are generators of the Morse-Smale complex $(C_\cdot, \partial_\cdot)$,
the red points correspond to the elements of $K_q^{(d+1)}$ or
$K_r^{(d)}$, and
the blue points to elements of
$\Sigma^{(d+1)}_q \setminus K_q^{(d+1)}$
or
$\Sigma^{(d)}_r \setminus K_r^{(d)}$.
Thus, the blue points correspond to handles freely generating the homology
of $F^{(d+1)}$ or $F^{(d)}$, whereas the red points are handles which
cancel out blue handles below. Thus, what lies below the yellow line
is a ball. 

\begin{figure}[ht]
\begin{center}
\input{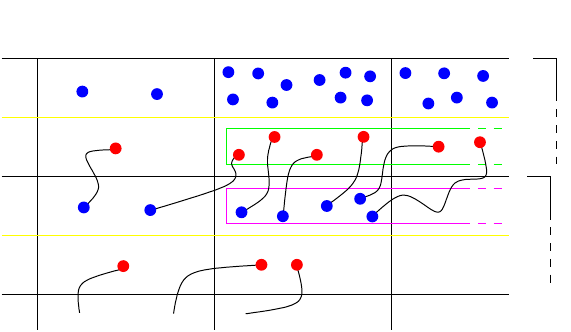_t}
\caption{A visualization of the Morse-Smale complex.}
\label{fig:traj_scheme}
\end{center}
\end{figure}

The further to the right, the higher the vanishing rate of the
generators. The blue points in $F^{(d)}$ must be cancelled by
the red points in $F^{(d+1)}$. This means that there must be
enough trajectories passing up from the blue points to the red points.
Since trajectories going up cannot go to the left by
\cref{lem:trajs}, the number of red points in the green box must
be greater or equal to the number of blue points in the magenta box,
and this is precisely \cref{eq:key}.

\end{block}

\section{Newton nondegenerate functions} \label{s:nondeg}

\begin{block}
In this section, we introduce notation related to the Newton polyhedron
associated with a power series.
We will use freely the notion of being \emph{Newton nondegenerate},
introduced in \cite{kouchnirenko}.
We use coordinates $z_0,z_1,\ldots,z_n$ in $\C^{n+1}$ which are
\emph{not} generic in the sense of the previous sections.

The proof of \cref{thm:tps_nondeg}\cref{it:tps_nondeg_n}
is similar to that of Varchenko's formula for the monodromy zeta
function \cite{Var_zeta}, with
\cref{thm:tps} playing the role of A'Campos formula.
The more general \cref{thm:tps_nondeg}\cref{it:tps_nondeg_d}
is similar to Oka's formula for principal zeta-functions \cite{oka_principal}.
By taking lengths on both sides of \cref{eq:AJ_vol} and using
\cref{rem:tps_props}\cref{it:tps_telescope}, we recover
Kouchnirenko's formula \cite{kouchnirenko}.

We use the notion of \emph{sedentarity}, a concept from tropical
geometry, used in e.g. \cite{mik_zhar_wave,IKMZ}.
\end{block}

\begin{block}
Let $f \in \C\{z_0,z_1,\ldots,z_n\}$ be a convergent power series in
$n+1$ variable:
\[
  f(z) = \sum_{i \in \Z_{\geq 0}^{n+1}} a_i z^i.
\]
We will assume that $f$ is not identically zero,
but vanishes at zero, with at most an isolated critical point.
Furthermore, we assume that $f$ is \emph{Newton nondegenerate}, as
follows. The \emph{support} of $f$ is the set
\[
  \supp(f) = \set{i \in \Z^{n+1}}{a_i \neq 0}
\]
and its \emph{Newton polyhedron} is the convex closure of the union
of positive orthants translated by the support:
\[
  \Gamma_+(f)
  =
  \convx
  \left(
  \bigcup_{i \in \supp(f)} i + \R_{\geq 0}^{n+1}.
  \right)
\]
Using the interval
\[
  \bR_{\geq 0} = [0,+\infty]
\]
any $v = (v_0,\ldots,v_n) \in \bR_{\geq 0}^{n+1}$
will be identified with the function
\[
  v: \R^{n+1}_{\geq 0} \to \bR_{\geq 0},\qquad
  (u_0,u_1,\ldots,u_n) \mapsto \sum_{i=0}^n v_i u_i.
\]
The \emph{refined sedentarity} of such a vector, or function, is
\[
  \sed(v) = \set{i \in \{0,\ldots,n\}}{ v_i = +\infty}.
\]
The \emph{sedentarity} of $v$ is $|\sed(v)|$.
and we define
\[
  \wedge_f v = \min_{\Gamma_+(f)} v,\qquad
  K_f(v)
  =
\begin{cases}
  \set{u\in \Gamma_+(f)}{v(u) = \wedge_f(v)} & \wedge_f v < +\infty,\\
  \emptyset                                  & \wedge_f v = +\infty.
\end{cases}
\]
A \emph{face} of $\Gamma_+$ is a nonempty set of the form $K_f(v)$.
We denote by $K^\circ_f(v)$ its relative inerior, i.e. the topological
interior of the face seen as a subset of the affine
subspace it generates in $\R^{n+1}$.
Such a face is compact if and only if all the coordinates of
the vector $v$ are positive, i.e. positive real numbers or $+\infty$.
The \emph{Newton diagram} of $f$ is the union of all compact faces
of $\Gamma_+(f)$
\[
  \Gamma(f) = \bigcup_{v \in \bR^{n+1}_{> 0}} K_f(v),
\]
and we define $\Gamma_-(f)$ as the union of segments joining the origin
$0 \in \R^{n+1}$ and some point in $\Gamma(f)$. 
If $K = K_f(v)$ is a compact face of the Newton diagram, then there exists a
unique $I_K \subset \{0,\ldots,n\}$ such that
\[
  K^\circ \subset \bR_{>0}^{I_K}
  = \set{(u_0,\ldots,u_n) \in \bR_{\geq 0}^{n+1}}
        {u_i = 0 \Leftrightarrow i \in I_K}.
\]
If $\dim(K) = |I_K|-1$, then $K$ is a \emph{coordinate facet} of
the diagram $\Gamma(f)$. A coordinate facet of dimension $n$ is
simply a facet. If $K$ is a coordinate facet, then there exists a unique
vector $v_K$ called
\emph{the (primitive integral) normal vector} of $K$, satisfying
\begin{equation} \label{eq:primitive}
  v_K \in \bZ^{n+1}_{\geq 0}, \qquad
  \gcd\left(\set{v_{K,i}}{i \not\in \sed(v)}\right) = 1,
\end{equation}
such that $K = K(v_K)$ and $\sed(v_K) = \{0,1,\ldots,n\} \setminus I_K$. 
\end{block}

\begin{definition}
\begin{blist}
\item
Denote by $\CF(\Gamma)$ the set of coordinate facets of the Newton diagram
$\Gamma$, and by $\NCF(\Gamma)$ the set of their primitive integral normal
vectors. If $F$ is a coordinate facet corresponding to the primitive integral
normal vector $v$, then we set

\[
  m_F = m_v = \wedge_f v.
\]
\item
We will denote by $g \in \C\{z_0,z_1,\ldots,z_n\}$ a
\emph{generic linear function}, that is,
\[
  g(z_0,z_1,\ldots,z_n) = b_0 z_0 + b_1 z_1 + \ldots + b_n z_n
\]
for some generic cofficients $b_i \in \C$. We will, furthermore, always
assume that none of the $b_i$ vanish. As a result, the Newton diagram
of $g$ is the standard $n$-simplex in $\R^{n+1}$, i.e. the convex hull of the
natural basis. We set
\[
  n_F = n_v = \wedge_g v.
\]
\end{blist}
\end{definition}

\begin{definition}
Denote by $V_s$ the \emph{generalized mixed $s$-volume} in $\R^n$.
See e.g. \cite{bern_mixed_vol,oka_principal} for further details.
If $K_1,\ldots,K_s$ are convex bodies in $\R^n$, and
there exist translates $K'_i$ of $K_i$ each contained in the same
rational $s$-plane $L \subset \R^n$, then
\[
  V_s(K_1,\ldots,K_s) \in \R
\]
is the mixed volume of the convex bodies $K'_i$ with respect to
the Lebesuge measure $\Vol_s$ on $L$, normalized so that the parallelpiped
spanned by an integral basis of $L\cap\Z^n$ has volume $1$.
If $k_1,\ldots,k_c$ are nonnegative integers that sum up to $s$, then
$V_s(K^{k_1}_1,\ldots,K^{k_c}_c)$ means that each $K_i$ is
repeated $k_i$ times.
As in \cite{oka_principal}, but not as in \cite{bern_mixed_vol},
the mixed volume is normalized
in such a way that if $K$ is a convex set in an $s$ dimensional affine
subspace of $\R^{n+1}$, then
\begin{equation} \label{eq:vol_mv}
  V_s(K^s) = \Vol_s(K).
\end{equation}
\end{definition}

\begin{definition} \label{def:W}
Let $v \in \bZ^{n+1}_{> 0}$ be primitive, and $d \in \{0,1,\ldots,n\}$.
With $s = n - |\sed(v)|$ and $c = n-d$, define
\begin{equation} \label{eq:Wd}
  W^{(d+1)}(v)
  =
  \sum_{ k_0, k_1, \ldots, k_c }
  s!
  V_s \left(
    K_f(v)^{k_0},
    K_g(v)^{k_1},
    \ldots,
    K_g(v)^{k_c}
  \right)
\end{equation}
where the sum runs through $c+1$-tuples of integers
\[
  (k_0,k_1,\ldots,k_c) \in \Z_{\geq 0} \times \Z_{>0}^c,
  \qquad
  \sum_{j=0}^c k_j = s.
\]
\end{definition}

\begin{rem} \label{rem:W}
\begin{blist}

\item \label{it:W_svol}
In the case $n=d$ we have $c = 0$, and so by \cref{eq:vol_mv},
if $v \in \bZ_{>0}^{n+1}$ with $s = n-|\sed(v)|$, then
\begin{equation} \label{eq:W_vol}
  W^{(n+1)}(v) = s! \Vol_s(K_f(v)).
\end{equation}
In particular, $W^{(n+1)}(v) = 0$ unless $v \in \NCF(\Gamma(f))$.

\item \label{it:W_dle}
The polytopes $K_f(v)$ and $K_g(v)$ are contained in an $s$ dimensional
affine
subspace of $\R^{n+1}$. Using the normalized volume on this subspace,
along with the notation used in \cite{khovanskii_genus},
we have
\[
  W^{(d+1)}(v)
  =
  (-1)^{s-c}
  \left(
  \frac{1}{1+K_f(v)}
  \right)
  \left(
  \frac{K_g(v)}{1+K_g(v)}
  \right)^c_s
\]
What this means is the following:
Treat $K_f(v)$ and $K_g(v)$ as variables, and expand the rational function
at the origin. For every homogeneous monomial $K_f(v)^a K_g(v)^b$,
with $a+b = s$, evaluate the mixed volume
 $V_s(K_f(v)^a, K_g(v)^b)$.
The right hand side above is the sum of all such terms.
Counting up the number of possible $k_1,k_2, \ldots, k_c$, we find, if
$d < n$,
\begin{equation}
  W^{(d+1)}(v)
  =
  \sum_{k=c}^s
  \left(
  \begin{matrix}
   k-1 \\
   c-1 
  \end{matrix}
  \right)
  s!V_s \left(
    K_f(v)^{s-k},
    K_g(v)^k
  \right)
\end{equation}

\item \label{it:W_finite}
If $\dim(K_f(v) + K_g(v)) < s$, then every term in \cref{eq:Wd} vanishes.
As a result, we have $W^{(d+1)}(v) = 0$ unless
$v \in \NCF(\Gamma(f g))$. In particular, $W^{(d+1)}(v) \neq 0$ for
all but finitely many primitive vectors $v \in \bZ_{>0}^{n+1}$.

\end{blist}
\end{rem}

\begin{thm} \label{thm:tps_nondeg}
\begin{blist}

\item \label{it:tps_nondeg_n}
We have
\begin{equation} \label{eq:AJ_vol}
  \tTPS(f,0)
  =
  \sum_{F \in \CF}
  (-1)^{n-s}
  \Vol_s F
  \newpol{ m_F }{ n_F }.
\end{equation}
where for any $F \in \CF$, we set $s = \dim(F)$.

\item \label{it:tps_nondeg_d}
For any $d=0,1,\ldots,n$,
we have $W^{(d+1)}(v) = 0$ for all but finitely many primitive
integral vectors $v \in \bZ_{>0}^{n+1}$, and
\begin{equation} \label{eq:AJ_mv}
  \tTPS^{(d+1)}(f,0)
  =
  \sum_{\substack{v \in \bZ_{>0}^{n+1} \\ \mathrm{primitive} }}
  (-1)^{n-s}
  W^{(d+1)}(v)
  \,
  \newpol{ m_v }{ n_v }.
\end{equation}

\end{blist}
\end{thm}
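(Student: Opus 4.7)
The plan is to apply \cref{thm:tps} to a toric (Newton) embedded resolution of $(\C^{n+1},0)$ adapted to $fg$, and then collapse the resulting Euler-characteristic sum into face contributions on $\Gamma(f)$, in a manner parallel to Varchenko's derivation of the monodromy zeta function formula from A'Campo's. Concretely, I will choose a regular simplicial refinement $\Sigma$ of the normal fan of $\Gamma_+(fg)\subset \R^{n+1}$; by Newton nondegeneracy of $f$ and the genericity of the linear function $g$ (so that $\Gamma_+(g)$ is the standard simplex), the induced toric morphism $\pi:Y_\Sigma \to \C^{n+1}$ is an embedded resolution of $f$ and $g$ satisfying the hypotheses of \cref{lem:Asep}. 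Its exceptional divisors $D_v$ are indexed by primitive vectors $v \in \Z_{>0}^{n+1}$ spanning interior rays of $\Sigma$, with $m_v = \wedge_f v$ and $n_v = \wedge_g v$.

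For part \cref{it:tps_nondeg_n}, I will compute $\chi(D_v^\circ \setminus \tX)$ for each such $v$. Writing $F = K_f(v)$ and $s = \dim F$, Newton nondegeneracy implies that on the big torus orbit of $D_v$ the equation of $\tX$ is the face polynomial $f_F$, which is smooth and invariant under the subtorus orthogonal to $\mathrm{aff}(F)$; combined with the toric structure of the remaining strata (which retain intersections with the non-exceptional strict transforms $D_{e_i}$ of coordinate hyperplanes, since $D_v^\circ$ subtracts only other exceptional components), this reduces the Euler-characteristic calculation to a Kouchnirenko-type formula on a quotient torus of dimension $s$. Summing over all rays $v$ with $K_f(v) = F$ for a fixed coordinate facet $F$ of dimension $s$ then produces a telescoping sum over the link of the face cone dual to $F$, which collapses to $(-1)^{n-s}\Vol_s(F)\newpol{m_F}{n_F}$; the combinatorial factor arising from $(m_v,n_v) = c(m_F, n_F)$ is absorbed via the identity $c\newpol{m_F}{n_F} = \newpol{cm_F}{cn_F}$. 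Rays whose $K_f(v)$ is not a coordinate facet contribute zero after this grouping, because the reduction to the quotient torus leaves a vanishing factor $\chi((\C^*)^k) = 0$ from the transverse directions.

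For part \cref{it:tps_nondeg_d}, I iterate the same construction for $f^{(d+1)}$. The strict transform of $H^{(d+1)}$ in $Y_\Sigma$ provides an induced embedded resolution of $f^{(d+1)}$, with exceptional components $D_v \cap Y_\Sigma^{(d+1)}$ indexed by the same primitive rays $v$; applying \cref{thm:tps} to this resolution, the Kouchnirenko step is replaced by a Khovanskii--Bernstein mixed-volume computation, since now $c = n-d$ of the ambient transverse directions are cut down by generic linear equations instead of by $f$. The resulting expression is exactly $W^{(d+1)}(v)$ by the rational-function reformulation of \cref{rem:W}\cref{it:W_dle}, while finiteness of the sum is \cref{rem:W}\cref{it:W_finite}.

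The main obstacle is the combinatorial bookkeeping of the telescoping step, particularly for coordinate facets of dimension $s < n$: these have primitive normal vectors with infinite coordinates and thus do \emph{not} correspond directly to rays of $\Sigma$, but arise only along boundary-adjacent rays, their contribution being produced entirely by the intersections of $D_v^\circ$ with the non-exceptional $D_{e_i}$. Organizing these contributions and verifying that they sum to the single face term $(-1)^{n-s}\Vol_s(F)\newpol{m_F}{n_F}$ simultaneously for all coordinate facets of all dimensions will require an inductive argument over the sedentarity stratification of $\Gamma(f)$, mirroring the passage from A'Campo's to Varchenko's formula in part \cref{it:tps_nondeg_n} and from Varchenko's to Oka's principal zeta function formula in part \cref{it:tps_nondeg_d}.
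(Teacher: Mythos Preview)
Your overall strategy---apply \cref{thm:tps} to a toric resolution adapted to $fg$ and reduce to face contributions---is exactly the paper's, and the ingredients you name (Newton nondegeneracy, Khovanskii--Bernstein mixed volumes, Oka's principal zeta function computation) are the right ones. The difference is organizational, and it bears directly on the ``main obstacle'' you identify.

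You plan to sum over rays $v \in \Z_{>0}^{n+1}$ with finite coordinates, compute $\chi(D_v^\circ \setminus \tX)$, and then recover the lower-dimensional coordinate facets (those with $\sed(v_F) \neq \emptyset$) by an inductive telescoping over the sedentarity stratification. The paper avoids this induction entirely by a reindexing trick: rather than summing over exceptional components $D_i$, it decomposes each $D_i^\circ$ into its torus orbits $O_\sigma$, where $\sigma$ is a cone generated by the ray $v(i)$ together with a subset $S$ of the standard basis $e_0,\ldots,e_n$. Each such orbit is then relabelled by a primitive vector $v_\sigma \in \bZ_{>0}^{n+1}$ obtained from $v(i)$ by setting the $S$-coordinates to $\infty$ and renormalizing. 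This puts all coordinate facets---of every dimension---on the same footing in a single sum over primitive $v \in \bZ_{>0}^{n+1}$, and the Euler characteristic $\chi(O_v \cap Y^{(d+1)} \setminus \tX)$ is then read off directly from Oka's formulas in \cite{oka_principal} as $(-1)^{s-c} W^{(d+1)}(v)$, with no telescoping needed.

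A second, smaller difference: the paper proves \cref{it:tps_nondeg_d} first and deduces \cref{it:tps_nondeg_n} as the special case $d=n$ via \cref{rem:W}\cref{it:W_svol}, rather than doing \cref{it:tps_nondeg_n} separately. Your route would work, but the paper's order saves the Kouchnirenko-type computation you sketch for part \cref{it:tps_nondeg_n} from having to be done twice.
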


\begin{cor} \label{cor:tps_nondeg}
The Jacobian polygons associated with $f$ are given by
\begin{equation} \label{eq:AJ_mv_TPS}
  \TPS^{(d+1)}(f,0)
  =
  \sum_{\substack{v \in \bZ_{>0}^{n+1} \\ \mathrm{primitive} }}
  (-1)^{n-s}
  \left(
  W^{(d+1)}(v)
  +
  W^{(d+1)}(v)
  \right)
  \,
  \newpol{ m_v }{ n_v }.
\end{equation}
\end{cor}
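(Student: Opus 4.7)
The plan is to obtain this corollary as an immediate consequence of \cref{thm:tps_nondeg}\cref{it:tps_nondeg_d}, together with the decomposition of the Jacobian polygon into two consecutive alternating Jacobian polygons recorded in \cref{rem:tps_props}\cref{it:two_summands}:
\[
  \TPS^{(d+1)}(f,0) \;=\; \tTPS^{(d+1)}(f,0) + \tTPS^{(d)}(f,0).
\]

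First I would apply \cref{thm:tps_nondeg}\cref{it:tps_nondeg_d} with the given $d$ to expand the first summand, and then apply the same theorem with $d$ replaced by $d-1$ to expand the second. Both resulting expressions are sums over the same index set of primitive vectors $v \in \bZ_{>0}^{n+1}$, and the sign $(-1)^{n-s}$ and the generator $\newpol{m_v}{n_v}$ depend only on $v$ (together with $f$ and $g$), not on the parameter $d$. Consequently the two sums combine termwise. Finite support follows from \cref{rem:W}\cref{it:W_finite}, applied to both $W^{(d+1)}(v)$ and $W^{(d)}(v)$. In the boundary case $d=0$, the second term is zero by \cref{def:TN}\cref{it:TN_AJd}, which we may interpret as $W^{(0)}(v)=0$ for all $v$.

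The only subtle point is that the formula as printed in the statement reads ``$W^{(d+1)}(v)+W^{(d+1)}(v)$'', which would simplify to $2W^{(d+1)}(v)$; this must be a typographical slip for ``$W^{(d+1)}(v)+W^{(d)}(v)$'', since the identity above forces the latter (and one can already see the mismatch in the case $d=0$, where the stated formula would produce twice the correct answer). Once this is corrected, there is no substantive obstacle: the corollary is a two-line consequence of the preceding theorem, requiring no new geometric input beyond the parameter shift $d \leadsto d-1$.
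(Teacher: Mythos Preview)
Your proposal is correct and matches the paper's own proof, which simply cites the preceding theorem together with \cref{rem:tps_props}\cref{it:two_summands}. You also correctly spot the typographical slip in the statement: the intended formula is $W^{(d+1)}(v)+W^{(d)}(v)$, not $W^{(d+1)}(v)+W^{(d+1)}(v)$.
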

\begin{proof}
This follows from the above theorem, and
\cref{rem:tps_props}\cref{it:two_summands}
\end{proof}

\begin{figure}[ht]
\begin{center}
\input{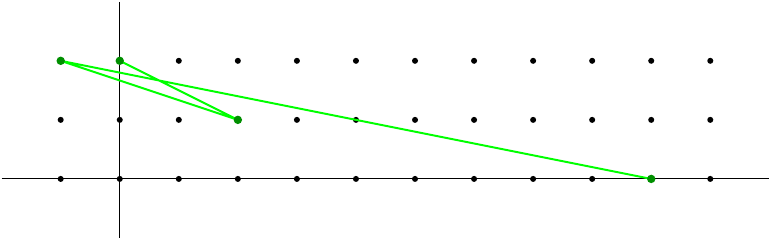_t}
\caption{The alternating Jacobian polyhedron of the $E_8$ surface singularity.}
\label{fig:235}
\end{center}
\end{figure}

\begin{example}
\begin{blist}
\item \label{ex:235}
The $E_8$ singularity $f(x,y,z) = x^2 + y^3 + z^5$ is Newton nondegenerate.
By \cref{thm:tps_nondeg}, its alternating Jacobian polyhedron is
\[
  \tTPS(f,0)
  =
  2 \newpol{5}{1}
  - \newpol{3}{1}
  + \newpol{2}{1}.
\]
In \cref{fig:235}, we see its virtual vertices and edges
(see \cref{block:vertices}). As predicted by \cref{cor:upper_half},
the vertices lie in the upper halfplane. One vertex, however,
strays over to the left halfplane.

\item
In \cite[Example (3.7) and (3.8)]{fukui_newton_diag},
Fukui considers the examples
\[
  f_\delta(x_1,x_2,x_3,x_4)
  =
  x_1^2 x_2^2 x_3^2 x_4^2
  + x_1 x_3^8
  + x_2 x_4^8
  + x_1^8 x_4
  + x_2^8 x_3
  + \delta x_2^4 x_3^4
\]
where $\delta = 0,1$. One checks that every face of this diagram is
a simplex, and so the functions are Newton nondegenerate.
Using \cref{thm:tps_nondeg}, we find

\begin{minipage}{0.5\textwidth}
\[
\begin{split}
  \tTPS^{(4)}(f_0,0)
  &= 8 \newpol{455}{47} \\
  \tTPS^{(3)}(f_0,0)
  &= \newpol{8}{1} + 40\newpol{9}{1} + 2 \newpol{28}{3} \\
  \tTPS^{(2)}(f_0,0)
  &= 2 \newpol{8}{1} + 4 \newpol{9}{1} \\
  \tTPS^{(1)}(f_0,0)
  &= \newpol{1}{8}
\end{split}
\]
\end{minipage}
\begin{minipage}{0.5\textwidth}
\[
\begin{split}
  \tTPS^{(4)}(f_1,0)
  &= 57 \newpol{28}{3} + 4 \newpol{455}{47} \\
  \tTPS^{(3)}(f_1,0)
  &= 4 \newpol{8}{1} + 29 \newpol{9}{1} + 4 \newpol{28}{3} \\
  \tTPS^{(2)}(f_1,0)
  &= 4 \newpol{8}{1} + 2 \newpol{9}{1} \\
  \tTPS^{(1)}(f_0,0)
  &= \newpol{1}{8}
\end{split}
\]
\end{minipage}

\vspace{.1cm}

Fukui showed that in either case, the inequality
\[
  \Loj(f_\delta,0) \leq 8 + \frac{32}{47}
\]
holds. By our computations and \cref{cor:loj_reduced}, in fact, equality
holds.
\end{blist}
\end{example}

\begin{proof}[Proof of \cref{thm:tps_nondeg}]
It follows from \cref{rem:W}\cref{it:W_svol} that
\cref{it:tps_nondeg_d} implies \cref{it:tps_nondeg_n}.
Furhtermore, if $v \in \bZ^{n+1}$ is primitive, then
we have $W^{(d+1)}(v) = 0$ unless $v \in \NCF(fg)$
by \cref{rem:W}\cref{it:W_finite}.
We are therefore left with proving \cref{eq:AJ_mv}.

Choose a regular subdivision $\triangle$ of the cone $\R_{\geq 0}^{n+1}$
which refines the natural subdivisions $\triangle_f, \triangle_g$
dual to $\Gamma(f)$ and $\Gamma(g)$.
Note that as a result, any normal vector to a facet of $\Gamma(fg)$
therefore generates a ray in $\triangle$.
This way, $\triangle$ is a fan which
induces a toric variety $Y$ and a toric map
$\pi_\triangle:Y\to \C^{n+1}$. That $\triangle$ refines $\triangle_f$
implies that $\pi_\triangle$ is an embedded resolution of $f$.
The requirement that $\triangle$ refines $\triangle_g$ is equivalent
to requiring that $\pi_\triangle$ factors through the blow-up of
$\C^{n+1}$ at $0$. 
We use the notation inroduced in \cref{s:res} for $\pi = \pi_\triangle$.

There is a natural map $v:I \to \Z^{n+1}_{>0}$
such that if $D_i \subset Y$ is an
irreducible exceptional component, then $v(i) \in \Z^{n+1}_{>0}$ is prime,
and $D_i$ equals the orbit closure $\overline O_{v(i)}$.
The subset $D_i^\circ$ is the union of those orbits $O_\sigma$ coresponding
to cones $\sigma$ generated by $v(i)$ and some subset $S$ of the natural
basis $e_0, e_1, \ldots, e_n$ of $\Z^{n+1}$.
For such a $\sigma$, denote by $v_\sigma$ the primitive vector
in $\bZ_{>0}^{n+1}$ obtained by replacing the $j$th coordinate
of $v$ by $\infty$ for $j \in S$, and dividing by the greatest common
divisor of the remaining coordinates, e.g.
\[
  v_i = (2,3,6), \quad S = \{0\}
  \qquad
  \rightsquigarrow
  \qquad
  v_\sigma = (\infty, 1, 2).
\]
In this case, set $O_v = O_\sigma$.
Any vector in $\NCF(fg)$ arises in this fashion from some
$\sigma \in \triangle$. Thus, applying \cref{thm:tps}, we find
\begin{equation} \label{eq:chi_d}
\begin{split}
  \tTPS^{(d+1)}(f,0)
  &= (-1)^d \sum_{i\in I}
      \chi\left( D_i^\circ \cap Y^{(d+1)} \setminus \tX \right) 
      \newpol{m_i}{n_i} \\
  &= (-1)^d \sum_{\substack{v \in \bZ_{>0}^{n+1} \\ \mathrm{primitive} }}
      \chi\left( O_v \cap Y^{(d+1)} \setminus \tX \right)
      \newpol{m_v}{n_v}
\end{split}
\end{equation}
For any such $v = v_\sigma$, we have
\begin{equation} \label{eq:chis}
  \chi\left( O_v \cap Y^{(d+1)} \setminus \tX \right)
  =
  \chi\left( O_v \cap Y^{(d+1)} \right)
  -
  \chi\left( O_v \cap Y^{(d+1)} \cap \tX \right).
\end{equation}
In \cite[\textsection 6]{oka_principal}, Oka proves that
\[
  \chi\left( O_v \cap Y^{(d+1)} \right)
  =
  \left(
    \frac{K_g(v)}{1+K_g(v)}
  \right)^c_s
\]
and
\[
  \chi\left( O_v \cap Y^{(d+1)} \cap \tX \right)
  =
  \left(
    \frac{K_f(v)}{1+K_f(v)}
  \right)
  \left(
    \frac{K_g(v)}{1+K_g(v)}
  \right)^c_s.
\]
Thus, we have
\[
\begin{split}
  \chi\left( O_v \cap Y^{(d+1)} \setminus \tX \right)
  &=
  \left(
    1 - \frac{K_f(v)}{1+K_f(v)}
  \right)
  \left(
    \frac{K_g(v)}{1+K_g(v)}
  \right)^c_s \\
  &=
  \left(
    \frac{1}{1+K_f(v)}
  \right)
  \left(
    \frac{K_g(v)}{1+K_g(v)}
  \right)^c_s \\
  &=
  (-1)^{s-c} W^{(d+1)}(v).
\end{split}
\]
Since $(-1)^d(-1)^{s-c} = (-1)^{n-s}$, \cref{eq:chi_d} gives \cref{eq:AJ_mv}.
\end{proof}

\section{The \L ojasiewicz exponent from the Newton diagram} \label{s:loj}

\begin{block}
In this section we give a formula for the \L ojasiewicz exponent
of a Newton nondegenerate function in terms of its Newton diagram,
\cref{thm:bko_statement}, which is a direct result of the previous
results of this paper.
As a corollary, we recover a result of Brzostowski \cite{Brzostowski}.
\end{block}

\begin{definition} \label{def:maxax}
Let $\Gamma, \Xi \subset \R_{\geq 0}^{n+1}$ be Newton diagrams, i.e. the
union of compact faces of a Newton polyhedron.
Denote by $\Lambda$ the Newton diagram of a generic linear function
\[
  \Lambda = \Gamma\left( \sum_{k=0}^n b_k z_k \right), \qquad
  b_0, b_1, \ldots, b_n \in \C^*.
\]
\begin{blist}

%
%
%
\item
If $F \in \CF(\Gamma)$ is a coordinate facet, corresponding to a primitive
normal vector $v \in \NCF(\Gamma)$, then the
\emph{maximal axial intersection} of $F$, or $v$, is
\[
  \maxax(F) = \maxax(v) = \frac{\wedge_\Gamma v}{\wedge_\Lambda v}.
\]

\item
The \emph{maximal axial intersection} of $\Gamma$ is
\[
  \maxax(\Gamma)
  =
  \max\set{ \frac{\wedge_\Gamma v}{\wedge_\Lambda v} }
          { v \in \NCF(\Gamma) }.
\]


\item
For $\alpha \in \Q$, let
\[
  s_\alpha(\Gamma)
  =
  \bigcup \set{ K_f(v) } { v \in \bR_{>0}^{n+1},\; \maxax(v) \leq \alpha }
  \subset
  \Gamma.
\]

\end{blist}
\end{definition}

\begin{definition}[\cite{kouchnirenko}] \label{def:newton_number}
\begin{blist}
\item
If $S \subset \R^{n+1}$ is a measurable subset, define the
\emph{Newton number}
\[
  \nu(S) = \sum_{J \subset \{0,1,\ldots,n\}}
    |J|!\Vol_{|J|}\left(S \cap \R^J\right)
\]
\item
If $\Gamma \subset \R^{n+1}$ is a Newton diagram, denote by
$\Gamma_-$ the union of segments joining any point in $\Gamma$ with
the origin.
\end{blist}
\end{definition}

\begin{thm} \label{thm:bko_statement}
Let $f \in \C\{z_0, z_1, \ldots, z_n \}$ have a Newton nondegenerate
isolated singularity at $0$, with Newton diagram $\Gamma = \Gamma(f)$.
Assume that either $n$ is even, or that $f$ does not have a Morse
point at $0$.
Then, there exists a coordinate facet $F \subset \Gamma$ such that
\begin{equation} \label{eq:bko_F}
  \Loj(f,0) = \maxax(F) - 1.
\end{equation}
For any $\alpha \in \Q$, we have
\begin{equation} \label{eq:bko_nu}
  \nu(\Gamma_-) \geq \nu(s_\alpha(\Gamma)_-)
\end{equation}
with an equality if and only if $\alpha - 1 \geq \Loj(f,0)$. In particular,
\begin{equation} \label{eq:bko_min}
  \Loj(f,0)
  =
  \min\set{ \alpha - 1 \in \Q }
          { \nu(s_\alpha(\Gamma(f))_-) = \nu(\Gamma) }.
\end{equation}
\end{thm}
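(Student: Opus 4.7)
The plan is to convert the identity $\Loj(f,0) = \deg\tTPS(f,0)-1$ from \cref{cor:loj_reduced} into a purely combinatorial statement about $\Gamma$ via the explicit formula of \cref{thm:tps_nondeg}\cref{it:tps_nondeg_n}. The stated hypothesis on $f$ is precisely the complement of the excluded case in \cref{cor:loj_reduced}, so that identity is available.

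For~\eqref{eq:bko_F}, I would write
$$\tTPS(f,0) = \sum_{F\in\CF(\Gamma)} (-1)^{n-\dim F}\,\Vol_{\dim F}(F)\,\newpol{m_F}{n_F}.$$
Each summand contributes at degree $\maxax(F) = m_F/n_F$. By \cref{cor:lc_degs}\cref{it:lc}, $\lc\tTPS(f,0)>0$, so the leading coefficient survives without full cancellation; hence some coordinate facet $F$ has $\maxax(F) = \deg\tTPS(f,0) = \Loj(f,0)+1$, establishing~\eqref{eq:bko_F}.

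For~\eqref{eq:bko_nu}, the inclusion $s_\alpha(\Gamma)\subset\Gamma$ gives $s_\alpha(\Gamma)_-\cap\R^J\subset\Gamma_-\cap\R^J$ for each $J\subset\{0,1,\ldots,n\}$, and summing $|J|!\Vol_{|J|}$ yields the inequality. The heart of the argument is the equality characterization, and the key geometric claim I would establish is that $\nu(s_\alpha(\Gamma)_-) = \nu(\Gamma_-)$ holds if and only if every $n$-dimensional facet $F$ of $\Gamma$ satisfies $\maxax(F)\le\alpha$. One direction is immediate, since if every $n$-dimensional facet is covered then so are all their subfaces, and $s_\alpha(\Gamma)_- = \Gamma_-$; the converse follows from the observation that an omitted $n$-dimensional facet $F$ strips the full-dimensional ``tent'' beneath $F$ from $s_\alpha(\Gamma)_-$ inside $\R^{I_F}$, producing a strict drop in $\Vol_{|I_F|}$. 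Denoting by $M$ the maximum of $\maxax(F)$ over $n$-dimensional facets of $\Gamma$, the equality thus holds exactly for $\alpha\ge M$, and~\eqref{eq:bko_min} reduces to showing $M = \Loj(f,0)+1$.

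The main obstacle is exactly this identification $M = \deg\tTPS(f,0)$, since \cref{thm:tps_nondeg}\cref{it:tps_nondeg_n} sums over all of $\CF(\Gamma)$, including lower-dimensional coordinate facets whose signed contributions could in principle push the degree of $\tTPS(f,0)$ strictly above $M$. I would settle this by a continuity argument in the normal fan: the ratio $v\mapsto\wedge_\Gamma v/\wedge_\Lambda v$ is continuous on the open orthant $\R_{>0}^{n+1}$, and any primitive normal vector of a lower-dimensional coordinate facet (with some infinite components) arises as a limit of primitive vectors supporting $n$-dimensional facets; hence $\max_{F\in\CF(\Gamma)}\maxax(F)$ is attained on some $n$-dimensional facet. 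Since the contributions of $n$-dimensional facets to $\tTPS(f,0)$ all carry positive sign, and \cref{cor:lc_degs}\cref{it:lc} forbids the leading coefficient from vanishing, this pins down $\deg\tTPS(f,0) = M$ and completes the proof.
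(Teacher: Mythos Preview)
Your argument for \eqref{eq:bko_F} matches the paper's. For \eqref{eq:bko_nu} and \eqref{eq:bko_min}, you take a genuinely different route: the paper invokes \cref{cor:lc_degs}\cref{it:ttps_ineq} directly, relating $\nu(\Gamma_-)-\nu(s_\alpha(\Gamma)_-)$ to $\ell\bigl(\tTPS_{>\alpha}(f,0)\bigr)$ via \cref{thm:tps_nondeg}\cref{it:tps_nondeg_n} and then reading off both the inequality and the equality criterion at once. You instead prove the inequality by inclusion (fine with the unsigned $\nu$ of \cref{def:newton_number}), reduce the equality criterion to the identity $M=\deg\tTPS(f,0)$ with $M=\max\{\maxax(G):G\text{ an $n$-facet}\}$, and then attempt to prove this identity directly.

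The gap is in the direction $M\le\deg\tTPS(f,0)$. Your appeal to \cref{cor:lc_degs}\cref{it:lc} does not close it: positivity of the leading coefficient is a statement about the coefficient at $\deg\tTPS(f,0)$, not at $M$. Nothing in your argument prevents the positive contributions of $n$-dimensional facets at degree $M$ from being cancelled by the signed contributions of lower-dimensional coordinate facets that also have $\maxax=M$. Such cancellation is not hypothetical: in \cref{example:counter} one has $M=3$ while $\deg\tTPS(f,0)=2$, so the coefficient at $M$ vanishes; that example is excluded by the Morse hypothesis, but your argument never uses that hypothesis to rule out cancellation at $M$. (Your continuity argument, incidentally, is phrased imprecisely---the approximating vectors $v_t$ need not support $n$-dimensional facets---though the inequality $\maxax(F)\le M$ for all $F\in\CF$ is true, via $v=\sum c_jw_j$ in the normal cone giving $\wedge_f(v)=\sum c_j m_{G_j}$ and $\wedge_g(v)\ge\sum c_j n_{G_j}$.) To finish along your lines you would still need the strict positivity $\ell\bigl(\tTPS_{\ge M}(f,0)\bigr)>0$, and that is exactly what \cref{cor:lc_degs}\cref{it:ttps_ineq} provides; this is the step the paper's proof is pointing to.
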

\begin{proof}
The existence of $F$ follows immediately from
\cref{thm:tps_nondeg}\cref{it:tps_nondeg_n}
and \cref{cor:loj_reduced}.
The rest follows from \cref{cor:lc_degs}\cref{it:ttps_ineq}.
\end{proof}

\begin{cor}[\cite{Brzostowski}]
If $f,f' \in \C\{z_0,z_1,\ldots,z_n\}$ are Newton nondegenerate
and $\Gamma(f) = \Gamma(f')$, then
\[
\pushQED{\qed}
  \Loj(f,0) = \Loj(f',0).
  \qedhere
\popQED
\]
\end{cor}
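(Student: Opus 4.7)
The plan is to apply \cref{thm:bko_statement} directly to both $f$ and $f'$. The formula \cref{eq:bko_min} expresses
\[
  \Loj(f,0) = \min\set{\alpha - 1 \in \Q}{\nu(s_\alpha(\Gamma(f))_-) = \nu(\Gamma(f)_-)},
\]
whose right-hand side depends only on the combinatorial data of $\Gamma(f)$: both the subdiagrams $s_\alpha(\Gamma(f))$ (\cref{def:maxax}) and the Newton number $\nu$ (\cref{def:newton_number}) are defined entirely in terms of the Newton diagram, with no reference to the coefficients of $f$. Since $\Gamma(f) = \Gamma(f')$ by hypothesis, applying the same formula to $f'$ yields the same value, so $\Loj(f,0) = \Loj(f',0)$ whenever \cref{thm:bko_statement} applies.

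The only obstacle is the exceptional case excluded from \cref{thm:bko_statement}, namely $n$ odd and $f$ Morse at $0$. Here I would argue that $f'$ must also be Morse. Kouchnirenko's formula---recoverable from \cref{thm:tps_nondeg}\cref{it:tps_nondeg_n} by taking lengths on both sides of \cref{eq:AJ_vol} and invoking \cref{rem:tps_props}\cref{it:tps_telescope}---shows that the Milnor number of a Newton nondegenerate singularity is determined by its Newton diagram. Hence $\mu(f') = \mu(f) = 1$, and any isolated hypersurface singularity with Milnor number $1$ is Morse (the $n+1$ partials generate the maximal ideal, so the Hessian is invertible). The Łojasiewicz exponent of a Morse point is a universal constant, so $\Loj(f,0) = \Loj(f',0)$ in this case as well.

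In sum, the corollary is essentially an immediate consequence of \cref{thm:bko_statement}: the main theorem already encodes the $\Gamma$-dependence of the Łojasiewicz exponent, and the Morse exception is dispatched by a $\mu=1$ rigidity argument. No genuinely new work is required.
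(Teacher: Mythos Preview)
Your proposal is correct and matches the paper's approach: the corollary is stated with an embedded \qed{} and no separate proof, signalling that it is an immediate consequence of \cref{thm:bko_statement}, exactly as you argue. Your explicit treatment of the Morse exception via Kouchnirenko's formula is more careful than the paper, which leaves that case implicit.
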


\begin{rem}
If $f$ has a Morse point at $0$, then $\Loj(f,0) = 1$.
In the case when $n$ is odd, the function
\[
  f(z_0,\ldots,z_n) = z_0z_1 + z_2z_3 + \ldots + z_{n-1} z_n
\]
is Newton nondegenerate and has an isolated singularity, but
$\Gamma(f)$ has no coordinate facet.
\end{rem}

\begin{question}
\begin{blist}
\item
If $\alpha = \Loj(f,0)+1$, and $f' \in \C\{z_0,\ldots,z_n\}$ is Newton
nondegenerate such that $\Gamma(f') = s_\alpha(\Gamma(f))$, does
$f'$ then have an isolated singularity?
Note that by Kouchnirenko's criterion \cite{kouchnirenko},
this property only depends on $\Gamma(f')$.
\item
More generally, assume that $f'$ is Newton nondegenerate and that
$\Gamma(f') \subset \Gamma(f)$.
Does $f'$ then have an isolated singularity, if the two diagrams give rise
to the same Newton number, i.e. if
$\nu(\Gamma_-(f)) = \nu(\Gamma_-(f'))$?
\item
Conversely, if we assume that $f$ and $f'$ are Newton nondegenerate,
have isolated singularities, and that $\Gamma(f') \subset \Gamma(f)$, does it
then follow that $f$ and $f'$ have the same Milnor number?
By semicontinuity of the Milnor number, we know that in this case,
we have $\mu(f',0) \geq \mu(f,0)$.
Can it happen that
\[
  \nu(\Gamma_-(f')) > \nu(\Gamma_-(f)) \; ?
\]
\end{blist}
\end{question}

\section{Past, present and future} \label{s:ppf}

\begin{block}
In this section we start by recalling a conjecture by
Brzostowski, Krasiński and Oleksik \cite{bko_conj}
which greatly motivated this manuscript.
We then give a counterexample to this conjecture.
Finally, we give a similar conjecture which 
takes inspiration from the nonnegativity of Stanley's local $h$-vector
\cite{stanley_subdivisions} and a formula for the Newton number
\cite{selyanin_mono}.

In \cref{def:CNC}, we use rational coefficients in $K\new$, i.e.
we work implicitly in the group $K\new \otimes_\Z \Q$.
Given that $K\new$ is a free Abelian group, this should not cause
any confusion.
\end{block}

\begin{definition}[\cite{bko_conj}] \label{def:bko_exc}
Let $f \in \C\{x_0,\ldots,x_n\}$. A facet $F \subset \Gamma(f)$ of
its Newton diagram is \emph{exceptional} if there exist $i\neq j$
such that all but one vertex of $F$ lies on the $j$-th coordinate
hyperplane, and that this one vertex corresponds to a monomial
of the form $x_j x_i^k$. Denote by $E_f$ the set of
exceptional facets of $\Gamma(f)$.
\end{definition}

\begin{conj}[\cite{bko_conj}] \label{conj:bko}
Let $f \in \C\{x_0,\ldots,x_n\}$ be Newton nondegenerate, and assume that
the diagram $\Gamma(f)$ contains a nonexceptional facet
$F \in \CF \setminus E_f$, $\dim F = n$. Then
\[
  \Loj(f,0)
  =
  \max \set{\maxax(F) - 1 \in \Q}{F \in \CF \setminus E_f, \dim F = n}.
\]
\end{conj}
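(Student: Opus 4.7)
The plan is to combine the two main pillars developed earlier in the paper. First, \cref{thmA:bko} identifies $\Loj(f,0)+1$ as $\maxax(F)$ for some coordinate facet $F$ of $\Gamma(f)$, without restriction on its dimension. Second, \cref{thm:tps_nondeg}\cref{it:tps_nondeg_n} computes the alternating Jacobian polygon as an explicit signed sum over coordinate facets, in which the contribution of each top-dimensional facet $F$ is the strictly positive term $\Vol_n(F)\newpol{m_F}{n_F}$. Since $\deg\tTPS(f,0) = \deg\TPS(f,0)$ by \cref{cor:lc_degs}\cref{it:both_degs}, \cref{conj:bko} is equivalent to the claim that, in the presence of a nonexceptional top-dimensional facet, the maximum of $\maxax$ over $\CF(\Gamma(f))$ is attained on the subset $\{F \in \CF \setminus E_f : \dim F = n\}$.

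The argument would split into two sub-claims. Sub-claim (i): the maximum of $\maxax$ is attained by some top-dimensional facet, rather than only by a lower-dimensional coordinate facet living in a proper coordinate subspace. Sub-claim (ii): among top-dimensional facets attaining the maximum, at least one is nonexceptional, whenever $\Gamma(f)$ has any nonexceptional top-dimensional facet at all. For sub-claim (ii), I would analyze the combinatorics of \cref{def:bko_exc} directly: an exceptional facet has all but one vertex on a coordinate hyperplane $\{x_j = 0\}$, together with a single distinguished vertex at $x_jx_i^k$. Its primitive normal and the quantity $\maxax(F)$ can be written down explicitly in terms of $i$, $j$, $k$ and the coordinate-plane vertices, and the strategy would be to replace $F$ by an adjacent top-dimensional facet sharing the coordinate-hyperplane vertices but having a different apex, and to show that this replacement does not decrease $\maxax$.

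The main obstacle is sub-claim (i). \cref{thmA:bko} does not guarantee that the maximum of $\maxax$ over all coordinate facets is attained on a top-dimensional one: a coordinate facet of $\Gamma(f) \cap \R^J$ for some proper $J \subsetneq \{0,1,\ldots,n\}$ could in principle have strictly higher $\maxax$ than every top-dimensional facet of $\Gamma(f)$ itself. Precisely such a configuration would defeat the BKO formulation, because restricting to $\dim F = n$ would omit the coordinate facet that actually computes $\Loj(f,0)$. A natural attempt would be to use convexity of $\Gamma_+(f)$ to produce, from any lower-dimensional coordinate facet $F'$ realizing a large $\maxax$, a top-dimensional facet $F$ adjacent to it with $\maxax(F) \geq \maxax(F')$; one could try to construct $F$ by adjoining to $F'$ a nearby vertex of $\Gamma(f)$ lying off the coordinate subspace containing $F'$.

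I expect this last step to be the true difficulty, and the fact that the paper announces \cref{example:counter} as a counterexample suggests that the convex-geometric domination of lower-dimensional coordinate facets by top-dimensional ones genuinely fails in dimension $n=3$. A sensible search strategy for such a counterexample — and probably the route taken in \cref{example:counter} — is to engineer a diagram $\Gamma(f)$ with a thin, steep spike along a coordinate subspace that creates a very high-$\maxax$ lower-dimensional coordinate facet, while keeping all top-dimensional facets comparatively shallow and nonexceptional, so that \cref{conj:bko} produces an answer strictly smaller than the true $\Loj(f,0)$ computed via \cref{thm:bko_statement}.
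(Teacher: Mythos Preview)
The statement you were given is a \emph{conjecture} that the paper \emph{disproves} via \cref{example:counter}; there is no proof in the paper to compare against. Your proposal is, accordingly, not a proof but an analysis of what a proof would require together with a guess at how the counterexample works. That analysis misidentifies the failure mode.

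Your reduction of \cref{conj:bko} to the claim ``the maximum of $\maxax$ over $\CF(\Gamma(f))$ is attained on $\{F \in \CF \setminus E_f : \dim F = n\}$'' is not justified: \cref{thm:bko_statement} only says $\Loj(f,0)+1 = \maxax(F)$ for \emph{some} coordinate facet $F$, not for the one with maximal $\maxax$. Cancellation in \cref{eq:AJ_vol} can make $\deg\tTPS(f,0)$ strictly smaller than $\max_{F\in\CF}\maxax(F)$. So sub-claims (i) and (ii) are neither necessary nor sufficient.

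More importantly, your picture of the counterexample is backwards. You predict a diagram with a steep lower-dimensional coordinate facet of very high $\maxax$, so that the BKO formula \emph{underestimates} $\Loj(f,0)$. The paper's \cref{example:counter} does the opposite: it is a Morse point in four variables ($n=3$ odd), hence $\Loj(f,0)=1$, while both top-dimensional facets are nonexceptional with $\maxax$ equal to $2$ and $3$. The BKO formula therefore \emph{overestimates}, giving $2$ instead of $1$. This is precisely the case excluded by the hypothesis of \cref{thm:bko_statement} (Morse with $n$ odd, where $\tTPS(f,0)=0$ by \cref{rem:tps_props}\cref{it:AJ0}), so the very theorem on which you base your framework does not apply to the counterexample and your framework cannot see it.
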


\begin{example} \label{example:counter}
Consider the function $f \in \C\{x,y,z,w\}$ with generic coefficients in
front of the monomials
\[
  x^2,\; y^2,\; xz,\; xw,\; yz,\; yw,\; z^3,\; w^3.
\]
In particular, we assume that $f$ is Newton nondegenerate.
A computation shows that the Hessian of $f$ at $0$ is nonzero,
and so $f$ has a Morse point at $0$. In particular, we have
\[
  \Loj(f,0) = 1.
\]
The Newton diagram $\Gamma(f)$ has two facets, say,
$F_1$ with normal vector $v_1 = (1,1,1,1)$, whose
vertices correspond to the monomials
\[
 x^2,\; y^2,\; xz,\; xw,\; yz,\; yw
\]
and $F_2$, with normal vector $(2,2,1,1)$, whose
vertices correspond to the monomials
\[
  xz,\; xw,\; yz,\; yw,\; z^3,\; w^3.
\]
Neither facet is exceptional by \cref{def:bko_exc}, and we find
\[
  \maxax(F_1) = 2, \qquad \maxax(F_2) = 3.
\]
As a result, $f$ is a counterexample to \cref{conj:bko}.
\end{example}

\begin{block}
In order to improve \cref{conj:bko}, it is tempting to weaken
the condition of being exceptional as follow:
\emph{$F$ is weakly exceptional} if it has a triangulation consisting
of exceptional triangles.
In \cref{example:counter}, the facet $F_2$ is then weakly
exceptional, a triangulation is described in \cref{fig:decomposition}.
Note that the indices $i,j$ in the definition of exceptional cannot be
chosen the same for each simplex in this decomposition.
\begin{figure}[ht]
\begin{center}
\input{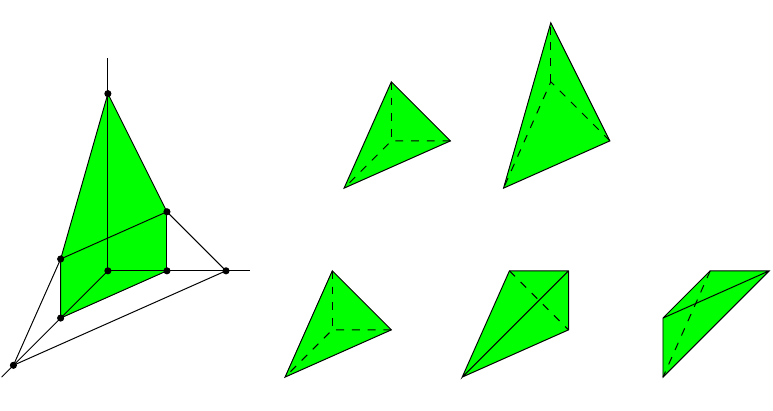_t}
\caption{A decomposition of the facet $F_2$. The picture shows a
projection of the Newton diagram of $f$ to the $xyz$-coordinate space
along the $w$-axis.}
\label{fig:decomposition}
\end{center}
\end{figure}
However, if $f$ is a function with generic coefficients in front of
the monomials
\begin{equation} \label{eq:3ex}
  x^3, y^3, xz^2, xw^2, yz^2, yw^2, w^4, z^4,
\end{equation}
then we can find a similar decomposition which includes the simplex
with vertices
\[
  xzw, yzw, z^3w, z^3w
\]
which is not exceptional. Thus, the following question remains:
\end{block}

\begin{question} \label{question:bko}
Given only a facet $F$ of a Newton diagram $\Gamma = \Gamma(f)$,
how can one identify whether $F$ should be considered
\emph{exceptional} (without knowing $\Gamma$),
in such a way that the statement in
\cref{conj:bko} is true?
\end{question}

\begin{block}
We sketch a possible answer to \cref{question:bko},
relative to a triangulation $\T$ of the Newton diagram, following methods from
\cite[6.1]{selyanin_mono}.
Assume that we have chosen a triangulation $\T$ of $\Gamma(f)$.
If $S \subset \R^{n+1}$ is a simplex
whose affine hull does not contain the origin, denote by $S_-$
the convex hull of $S \cup \{0\}$. If $v_0,v_1,\ldots,v_s$ are the
vertices of $S$, set (see \cite[Notation 2.21]{selyanin_mono})
\[
  \capno(S)
  =
  \left|
    \set{\sum_{i=0}^s \lambda_i v_i}{0 < \lambda_i < 1}
    \cap
    \Z^{n+1}
  \right|.
\]
We also set
\[
  \capno(\emptyset) = 1.
\]
If $S \in \T$ is a coordinate simplex contained in a coordinate facet
$F$ of the same dimension, then we set $m_S = m_F$ and $n_S = n_F$.
This way, we have
\[
  m_S s! \Vol_s(S)
  = (s+1)! \Vol_{s+1}(S_-)
  = \sum_{\substack{ T \in \T \cup \{\emptyset\} \\ T \subset S}} \capno(T).
\]
Denote by $\T_c \subset \T$ the set of coordinate simplices. Then, using
$s = \dim(S)$, \cref{eq:AJ_vol} reads
\begin{equation} \label{eq:AJ_Cap}
\begin{split}
  \tTPS(f,0)
  &= \sum_{S \in \T_c}
  \frac{(-1)^{n-s}}{m_S}
    \sum_{\substack{ T \in \T \cup \{\emptyset\} \\ T \subset S}} \capno(T)
  \newpol{m_S}{n_S} \\
  &=
  \sum_{T \in \T \cup \{\emptyset\}}
  \capno(T)
  \sum_{\substack{ S \in \T_c \\ S \supset T}}
  \frac{(-1)^{n-s}}{m_S}
    \newpol{m_S}{n_S}
\end{split}
\end{equation}
\end{block}

\begin{definition} \label{def:CNC}
With $\T$ as above, and $T \in \T$, or $T = \emptyset$,
we define the \emph{relative combinatorial Newton polyhedron}
of $\T$ as
\[
  \CNN(\T/T)
  =
  \sum_{\substack{ S \in \T_c \\ S \supset T}}
  \frac{(-1)^{n-s}}{m_S}
    \newpol{m_S}{n_S}
\]
\end{definition}

\begin{block}
The above definition lifts the definition of the combinatorial Newton number
(see e.g. \cite[Definition 2.18]{selyanin_mono})
from an integral invariant of a
triangulation, to a Newton polygon, up to the inclusion of
the empty set in a triangulation.
Assuming first that $T = \emptyset$, then $\CNN(\T/T)$
is related to the combinatorial Newton number $\CN(\T)$ by the equality
\[
  (-1)^{n+1} + \ell(\CNN(\T))
  =
  \sum_{S \in \T_c \cup \{\emptyset\}} (-1)^{n-s}
  =
  \CN(\T).
\]
where we set $s = -1$ if $S = \emptyset$.
If $T \neq \emptyset$, then the length of $\CNN(\T/T)$ equals
the combinatorial Newton number $\CN(\T/T)$ of the link of $T$ in $\T$.
Assuming that $\Gamma(f)$ is convenient, the Newton number
$\CN(\T)$
is nonnegative by \cite{selyanin_mono}.
In fact, it equals the value of a local $h$-polynomial at $1$,
and this polynomial has nonnegative coefficients \cite{stanley_subdivisions}.
Note, however, that $\CNN(\T/T)$ does not always have nonnegative coefficients.

Since the alternating Jacobian polygon can be calculated in terms of
these combinatorial Newton polygons by \cref{eq:AJ_Cap}, we would
like to use this formula to obtain the \L ojasiewicz exponent of $f$.
Indeed, the degree on the left hand side of \cref{eq:AJ_Cap} is bounded
above by the maximal degree of the terms on the right hand side, i.e.
\begin{equation} \label{eq:AJ_Cap_degineq}
  \deg\tTPS(f,0)
  \leq
  \max \set{\deg \CNN(\T/T)}{
\begin{matrix}
    T \in \T\cup \{\emptyset\},\\
    \capno(T) \neq 0,\\
    \CNN(\T/T)\neq 0
\end{matrix}
  }.
\end{equation}
If equalith holds here, then we do not have to include the case
$T = \emptyset$.
This is because, for any $S \in \T_c$, we have $\chi(S) = 1$, and so we find
\[
  \CNN(\T/\emptyset)
  = \sum_{T \in \T} (-1)^{\dim T} \CNN(\T/T).
\]
This is to say that, if equality holds in \cref{eq:AJ_Cap_degineq}, then
the same equality holds with the condition $T \in \T \cup \{\emptyset\}$
replaced by $T \in \T$.
If this is true, then we find that if $F \subset \Gamma(f)$
is a coordinate facet satisfying $\maxax(F) - 1 > \Loj(f,0)$, then
\[
  \maxax(F) > \deg(\CNN(\T/T))
\]
for all $T \in \T$, $T \subset F$.
Furthermore, there would necessarily exists some coordinate facet $F$,
and a $T \in \T$, $T \subset F$ with $\maxax(F) = \deg(\CNN(\T/T))$.
\end{block}

\begin{definition} \label{def:FTne}
Let $f \in \C\{x_0, x_1, \ldots, x_n\}$ be a Newton nondegenerate
function germ having an isolated singularity at $0$, and
$\T$ a triangulation of $\Gamma(f)$.
\begin{enumerate}
\item
Denote by $\Tne$ the set of simplices in $\T$ satisfying
\[
  \capno(T) \CNN(\T/T) = 0.
\]
\item
Denote by $\CFne^\T$ the set of
those coordinate facets $F \in \CF$ which contain a simplex
$T \in \Tne$ satisfying
\[
  \deg(\CNN(\T/T)) \geq \maxax(F).
\]
\end{enumerate}
\end{definition}

\begin{conj}
Let $f \in \C\{x_0, x_1, \ldots, x_n\}$ be a Newton nondegenerate
function germ having an isolated singularity at $0$.
Assume that
$f$ does not have a Morse point at $0$, or that $n$ is even.
If $\T$ is a triangulation of $\Gamma(f)$,
then
\[
  \Loj(f,0)
  = \max_{T \in \T_{\mathrm{ne}}} \deg(\CNN(\T/T)) - 1
  = \max_{F \in \CF_{\mathrm{ne}}^\T} \maxax(F) - 1.
\]
\end{conj}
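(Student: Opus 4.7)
The plan is to combine \cref{cor:loj_reduced}, the identity \cref{eq:AJ_Cap}, and a cancellation analysis at the top degree of $\tTPS(f,0)$. Under the Morse-exclusion hypothesis, \cref{cor:loj_reduced} gives $\Loj(f,0) + 1 = \deg \tTPS(f,0)$, so the conjecture reduces to the pair of equalities
\[
  \deg \tTPS(f,0)
  =
  \max_{T \in \Tne} \deg \CNN(\T/T)
  =
  \max_{F \in \CF_{\mathrm{ne}}^\T} \maxax(F).
\]

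The second equality is combinatorial and essentially tautological. Unpacking \cref{def:FTne}, for any $T \in \Tne$ the value $\deg \CNN(\T/T)$ is realized by some coordinate simplex $S \in \T_c$ with $S \supset T$ and $m_S/n_S$ equal to this degree; the unique coordinate facet $F \supset S$ of the same dimension then satisfies $\maxax(F) = m_S/n_S$ and $T \subset F$, so $F \in \CF_{\mathrm{ne}}^\T$. Conversely, every $F \in \CF_{\mathrm{ne}}^\T$ comes with a witness $T \in \Tne$ with $\deg \CNN(\T/T) \geq \maxax(F)$. Together these give both inequalities.

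The first equality is the substantive part. The upper bound
\[
  \deg \tTPS(f,0) \leq \max_{T \in \T \cup \{\emptyset\}} \deg \CNN(\T/T)
\]
is immediate from \cref{eq:AJ_Cap} and $\capno(T) \geq 0$. Eliminating $T = \emptyset$ from the right-hand side uses the identity
\[
  \CNN(\T/\emptyset) = \sum_{T \in \T} (-1)^{\dim T} \CNN(\T/T),
\]
itself a consequence of the relation $\sum_{T \in \T,\, T \subset S} (-1)^{\dim T} = 1$ valid for each $S \in \T_c$. Further restriction from $\T$ to $\Tne$ then follows from the vanishing and dominance conditions built into \cref{def:FTne}, possibly with an additional inductive argument on the link of each discarded simplex.

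The reverse inequality is the main obstacle: one must show that the top-degree terms in \cref{eq:AJ_Cap} do not cancel. The strategy is to exploit \cref{cor:lc_degs}\cref{it:lc}, which gives $\lc \tTPS(f,0) > 0$, together with sign control of each $\lc \CNN(\T/T)$ via Stanley's local $h$-polynomial nonnegativity~\cite{stanley_subdivisions} in the form used for combinatorial Newton numbers in~\cite{selyanin_mono}. The difficulty is genuine: as noted in the paper, $\CNN(\T/T)$ itself need not have nonnegative coefficients, so the nonnegativity must be refined to a statement about the leading coefficient only, and one must verify that contributions across different $T$'s share a sign. A successful proof will likely require either lifting the Stanley--Selyanin framework from integral combinatorial Newton numbers to Newton-polygon-valued analogues, or a direct geometric argument on the Morse-Smale complex of \cref{s:vanishing} that isolates, simplex by simplex, the contribution of each $T \in \T$ to $\lc \tTPS(f,0)$ through the vanishing rate filtration of \cref{cor:filtration}.
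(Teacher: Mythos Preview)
The statement is a \emph{conjecture} in the paper, not a theorem; the paper offers no proof. What the paper does provide is the motivating discussion preceding the conjecture: the identity \cref{eq:AJ_Cap}, the upper bound \cref{eq:AJ_Cap_degineq}, the reduction eliminating $T=\emptyset$, and the explicit acknowledgment that the missing ingredient is precisely that the top-degree contributions in \cref{eq:AJ_Cap} do not cancel. Your proposal reproduces this discussion faithfully, and you are candid that you have not closed the gap: your final paragraph says so outright.

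Two remarks on what you have written. First, your treatment of the second equality (between the max over $\Tne$ and the max over $\CF_{\mathrm{ne}}^\T$) is correct and is not spelled out in the paper; it is indeed a bookkeeping consequence of \cref{def:FTne}, once one observes that the degree of $\CNN(\T/T)$ is always realized as $m_S/n_S = \maxax(F)$ for some coordinate simplex $S \supset T$ lying in a coordinate facet $F$ of the same dimension. Second, your upper-bound step ``further restriction from $\T$ to $\Tne$'' is not as automatic as you suggest: the identity $\CNN(\T/\emptyset) = \sum_{T\in\T}(-1)^{\dim T}\CNN(\T/T)$ ranges over \emph{all} $T\in\T$, including those with $\capno(T)=0$, and such $T$ are excluded from $\Tne$ yet may still have large $\deg\CNN(\T/T)$. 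The paper's own phrasing (``If equality holds here, then\ldots'') is conditional for this reason. So even the upper bound, as stated over $\Tne$ rather than over all $T\in\T$ with $\CNN(\T/T)\neq 0$, is part of the conjecture, not a consequence of the displayed identities.

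In short: there is no proof in the paper to compare against, and your proposal is a correct reconstruction of why the statement is plausible and where the difficulty lies, not a proof.
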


\begin{example} \label{ex:twosimp}
Consider the Morse point in three variables
\[
  f(x,y,z) = xy + xz + 2yz + z^2
\]
and the two triangulations of the Newton diagram $\Gamma(f)$
seen in \cref{fig:twosimp}, with vertices
\[
  A:yz,\quad
  B:z^2,\quad
  C:xz,\quad
  D:xy.
\]
On the left hand side, the set $\Tne$ contains only the blue
triangle $ACD$. As a result, $F = ABCD$ is the only element
of $\CFne$.
On the right hand side, however, $\Tne$ consists of
only the vertex $B$, and $\CFne$ contains all coordinate facets.

\begin{figure}[ht]
\begin{center}
\input{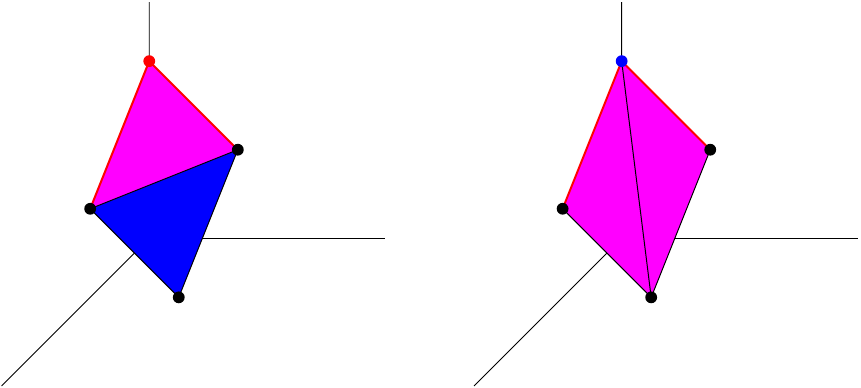_t}
\caption{Two triangulations of the Newton diagram $\Gamma(f)$.}
\label{fig:twosimp}
\end{center}
\end{figure}

\end{example}

\bibliographystyle{alpha}
\bibliography{bibliography}

\end{document}